\documentclass[11pt]{amsart}

\usepackage{amsmath}
\usepackage{amsthm} \usepackage{amscd} \usepackage{amssymb}
\usepackage{a4wide} \usepackage{graphicx} \usepackage{multirow}

\usepackage{todonotes} \usepackage{tikz-cd}

\newtheorem{theorem}{Theorem}[section]
\newtheorem{lemma}[theorem]{Lemma}
\newtheorem{corollary}[theorem]{Corollary}
\newtheorem{proposition}[theorem]{Proposition}

\theoremstyle{definition} \newtheorem{definition}[theorem]{Definition}
\newtheorem{example}[theorem]{Example}

\theoremstyle{remark} \newtheorem*{remark}{Remark}
 
\newtheorem*{ack}{Acknowledgments}

\numberwithin{equation} {section} \newcounter{temp}

\makeatletter \def\square{\RIfM@\bgroup\else$\bgroup\aftergroup$\fi
\vcenter{\hrule\hbox{\vrule\@height.6em\kern.6em\vrule}\hrule}\egroup}
\makeatother
 
\DeclareMathOperator{\Ad}{Ad}

\DeclareMathOperator{\Cl}{Cl} \DeclareMathOperator{\codim}{codim}
\DeclareMathOperator{\coker}{coker} 
\DeclareMathOperator{\Def}{Def} \DeclareMathOperator{\depth}{depth}
\DeclareMathOperator{\Der}{Der} \DeclareMathOperator{\diag}{diag}
\DeclareMathOperator{\Ext}{Ext} 
\DeclareMathOperator{\GL}{GL} 
\DeclareMathOperator{\Hom}{Hom} \DeclareMathOperator{\Hilb}{Hilb}
 
\DeclareMathOperator{\inter}{int}
 \DeclareMathOperator{\lcm}{lcm}
 \DeclareMathOperator{\negative}{neg}
 \DeclareMathOperator{\Proj}{Proj}
\DeclareMathOperator{\rank}{rk} \DeclareMathOperator{\Sing}{Sing}
\DeclareMathOperator{\SL}{SL} 
 \DeclareMathOperator{\Spec}{Spec}
 
\DeclareMathOperator{\Sym}{Sym} 

\DeclareMathOperator{\vertices}{vert}

\newcommand{\baseRing}[1]{\ensuremath{\mathbb{#1}}}
\newcommand{\N}{\baseRing{N}} \newcommand{\ZZ}{\baseRing{Z}}
\newcommand{\RR}{\baseRing{R}} \newcommand{\QQ}{\baseRing{Q}}
 
\newcommand{\CC}{\baseRing{C}} \newcommand{\CO}{\mathcal{O}}
\newcommand{\PP}{\baseRing{P}} 
 
\newcommand{\CS}{\mathcal{S}} \newcommand{\CQ}{\mathcal{Q}}
\newcommand{\CF}{\mathcal{F}}
 \newcommand{\CN}{\mathcal{N}}  \newcommand{\CI}{\mathcal{I}}
\newcommand{\PPq}{\baseRing{P}(\mathbf{q})}

\begin{document}

\title{Comparison theorems for deformation functors  via invariant
  theory}

\author{Jan Arthur Christophersen}
\address{Matematisk institutt,  Postboks 1053 Blindern, University of
  Oslo, N-0316 Oslo, Norway} 
\email{christop@math.uio.no}

\author{ Jan O. Kleppe}
\address{Oslo Metropolitan University,
  Faculty of Technology, Art and Design, PB 4 St. Olavs plass, N-0130
  Oslo, Norway} 
\email{JanOddvar.Kleppe@oslomet.no}
\urladdr{https://www.cs.hioa.no/\textasciitilde jank/}

\begin{abstract} We  compare  deformations of algebras to deformations
  of schemes in the setting of invariant theory. Our results
generalize comparison theorems of Schlessinger and the second author
for projective 
schemes. We consider deformations (abstract and embedded)
of  a scheme $X$ which is a good quotient of a quasi-affine
scheme $X^\prime$ by a linearly reductive group $G$ and compare them to
invariant deformations of an affine $G$-scheme containing $X^\prime$ as an open
invariant subset. The main theorems give conditions for when the
comparison morphisms are smooth or isomorphisms. 
\end{abstract}

\maketitle

\section{Introduction} 
Given a projective
scheme $X$ defined by equations $f_1, \dots , f_m \in 
k[x_0, \dots , x_n]$, perturbing the equations in a flat manner so
that they remain homogeneous induce
deformations of $X$. In practice this is often the only way to
construct examples of deformations. In more stringent terms we have a
map between the degree $0$ embedded deformations of the affine cone
$C(X)$ and deformations of $X$ in $\PP^n$. If we take into account trivial
deformations we get a map to the deformations of $X$ as scheme. The
question is, when do we get all deformations this way?

To see precisely what is going on we should compare deformation
functors on Artin rings. If $R = k[x_0, \dots , x_n]$ and $S = R/(f_1,
\dots , f_m)$ then the above describes maps $\Def_{S/R}^0 \to \Hilb_{X/\PP^n}$
where $\Def_{S/R}^0$ is the functor of degree $0$ deformations of $S$
as $R$-algebra and $\Def_{S}^0 \to \Def_{X}$
 where $\Def_{S}^0$ is the functor of degree $0$ deformations of $S$
as $k$-algebra. Generalizing  comparisons theorems of Schlessinger
(\cite{sc:rig}, \cite{sc:onr}) the second author gave in
\cite{kle:def}  exact
conditions for when these maps are isomorphisms. The object of this
paper is to further generalize these to other situations where one can compare 
deformations of algebras to deformations of schemes. 

The comparison map for projective schemes factors through deformations
of the open subset of $C(X)$ where the vertex $\{0\}$ is
removed. Thereafter one 
compares deformations to $X = (C(X)\setminus \{0\}) /k^\ast$ via the
quotient map. A 
natural question is if this can be generalized to closed subschemes of
toric varieties corresponding to ideals in the Cox ring. It turns out
one can go even further, i.e.\ the quotient need not be by a quasi-torus.

In this paper
we consider schemes $X$ that are good quotients of a quasi-affine
scheme $X^\prime$ by a linearly reductive group $G$. We assume that
$X^\prime \subseteq \Spec S$, where $S$ is a finitely generated
$k$-algebra and that $G$ acts on $S$ inducing the 
action on $X^\prime$. We can then compare $\Def_S^G$ to $\Def_X$ where
$\Def_S^G$ is the functor of invariant deformations of $S$. The
precise definitions of these settings are 
formulated with what we call $G$-quadruples - see Definition
\ref{ferry}.  Given such a situation
we define in Definition \ref{subferrydef} a $G$-subquadruple induced
by a $G$-invariant ideal in $S$. This give us a setting to compare local
Hilbert functors with the deformations functor of an invariant
surjection of $k$-algebras.

Linearly reductive groups have many properties coming from the
Reynolds operator which make it
possible to prove things, e.g.\ taking invariants is exact. Another
reason to work with them is that the functor of invariant deformations is
well defined and has the usual nice properties of a good deformation
theory. This was proven by Rim. 

Our main result on the local Hilbert functor is Theorem
\ref{hilbthm}. The conditions are depth conditions along the
complement of $X^\prime$ in $\Spec S$ and along the locus where the
quotient map fails to be geometric and smooth. We state also
corollaries for subschemes of toric 
varieties and weighted projective space. 

For the abstract deformation
functor $\Def_X$ the results are not so exact due to the presence of
infinitesimal 
automorphisms. It is not clear what the correct assumptions should be
but we found it useful to use results of Altmann regarding rigidity of
$\QQ$-Gorenstein toric singularities as a guide. If $\pi: X^\prime \to X$ is the quotient map set $\CS = \pi_\ast \CO_{X^\prime}$. As in the Hilbert functor case we get conditions involving the depth of $\CS$ 
along the locus in $X$ where the quotient map fails to be geometric and smooth, but also where the isotropy groups are not finite. 

A  new ingredient is what we call a set of 
Euler derivations coming from the Lie algebra $\mathfrak{g}$ of $G$. These are a generator set for the sheaf of derivations of $\CS$ over $\CO_X$ which we prove is free. They define an equivariant map $E:
\Omega^1_{\CS/\CO_X} \to \CS \otimes \mathfrak{g}^\ast$. The cokernel 
$\mathcal{Q}$ plays an important role for the obstructions to comparing  $\Def^G_{S}$ and $\Def_{X}$. The support of $\mathcal{Q}$ is contained in the set of points $x \in X$ where $\pi^{-1}(x)$ fails to have finite isotropy. In particular for toric varieties it is contained in the non-simplicial locus.

Our most general result for $\Def_X$ is Theorem
\ref{defGtheorem}. The results are more easily presented when $G$ is a quasitorus, i.e.\ the product of a torus and a finite abelian group (Theorem 
\ref{cor1}) and even better when $\mathcal{Q} = 0$ (Theorem \ref{Q0thm}). If $S$ is a regular ring then we
get a rigidity statement as corollary.

As a by-product of our generalization of the Euler derivation we are able to give criteria for when there
exists a generalized Euler sequence for the scheme $X$. See Definition
\ref{defeuseq} and Theorem \ref{thmeuseq}.

We conclude with examples of how our results can be used to study
deformations of toric varieties and subschemes of these. In particular
we consider Calabi-Yau hypersurfaces in simplicial toric Fano
varieties, first order deformations of toric singularities and reprove
rigidity results of Altmann (\cite[6.5]{al:min}) and of Totaro (\cite[Theorem 5.1]{to:ju}).

Throughout this paper $k$ is an algebraically closed
  field, in Section \ref{Defsec} we assume characteristic 0 and in
  Section \ref{tvsec}  that $k = \CC$.

\begin{ack} We
would  in particular like to thank Dmitry Timashev for patiently explaining aspects
of invariant theory that led to the the correct setting for our
results. We are grateful to Nathan Owen Ilten, Manfred Lehn, Benjamin
Nill and Arne B. Slets\o e for helpful discussions and answering questions.  
\end{ack}

\section{Preliminaries}

\subsection{Cotangent cohomology}

To fix notation we give a short description of the cotangent modules
and sheaves. Given a ring $R$ and an $R$-algebra $S$ there is a complex of free $S$
modules; the {\it cotangent complex} $\mathbb L_{\bullet}^{S/R}$. See
e.g.\ \cite[p. 34]{an:hom} for a definition.  For an $S$ module $M$ we
get the {\it cotangent cohomology} modules $T^i(S/R;M)
=H^i(\Hom_S(\mathbb L_{\bullet}^{S/R},M))$. If $R$ is the ground field
we abbreviate $T^i(S/R;M)=T^i_S(M)$ and $T^i_S(S)=T^i_S=T^i_X$ if
$X=\Spec S$.

If $X$ is a scheme we may globalise these as follows. If $\mathcal R$
is a sheaf of rings on $X$ and $\mathcal S$ an $\mathcal R$ algebra we set
${\mathcal L}_{\bullet}^{{\mathcal S}/{\mathcal R}}$ to be the complex
of sheaves associated with the presheaves $U\mapsto {\mathbb
L}_{\bullet}^{{\mathcal S}(U)/{\mathcal R(U)}}$. Let $\mathcal F$ be an
${\mathcal S}$ module. We get the cotangent cohomology sheaves
${\mathcal T}^i({\mathcal S}/{\mathcal R};{\mathcal F})$ as the
cohomology sheaves of ${\mathcal Hom}_{\mathcal S}({\mathcal
L}_{\bullet}^{{\mathcal S}/{\mathcal R}},{\mathcal F})$ and the
cotangent cohomology groups $T^i({\mathcal S}/{\mathcal R};{\mathcal
F})$ as the cohomology of $\Hom_{\mathcal S}({\mathcal
L}_{\bullet}^{{\mathcal S}/{\mathcal R}},{\mathcal F})$.

Because of the functoriality of these constructions we have ${\mathcal
T}^i({\mathcal S}/{\mathcal R};{\mathcal F})$ as the sheaf associated
to the presheaf $U\mapsto T^i({\mathcal S}(U)/{\mathcal
R}(U);{\mathcal F}(U))$ and $T^{\bullet} ({\mathcal S}/{\mathcal
R};{\mathcal F})$ as the hyper-cohomology of ${\mathcal Hom}_{\mathcal
S}({\mathcal L}_{\bullet}^{{\mathcal S}/{\mathcal R}},{\mathcal F})$. In
particular there is a ``local-global" spectral sequence
\begin{equation}\label{local-global} H^p(X,{\mathcal T}^q({\mathcal
S}/{\mathcal R};{\mathcal F}))\Rightarrow T^{n}({\mathcal S}/{\mathcal
R};{\mathcal F})\, .
\end{equation} If ${\mathcal S}$ is the structure sheaf ${\mathcal
O}_X$ and $\mathcal{R}$ corresponds to the  ground field, then we
abbreviate as above to 
$T^i_X(\mathcal{F})$.

The properties of the cotangent cohomology we use, e.g.\ the
Zariski-Jacobi sequence and flat base change results, may be found in
 \cite{an:hom}. We include here one result that does not seem to
be well known. Let $Z \subseteq X$ be a closed subscheme, then following Laudal
\cite[3.2.10]{la:for} one may define cotangent cohomology with support
in $Z$ denoted $T_Z^{i} ({\mathcal O}_X/{\mathcal R};{\mathcal F})$.
If $Z \subseteq X=\Spec S$ is given by $V(I)$, we write $T_{I}^i(S/R;S)=
T_{Z}^i(\CO_X/\mathcal{R};\CO_X)$.
\begin{theorem}\emph{\cite[Theorem 3.2.11]{la:for}}\label{supp} There
is a long exact sequence
\begin{equation*}\dots \to T_Z^{i} ({\mathcal O}_X/{\mathcal
R};{\mathcal F}) \to T^{i} ({\mathcal O}_X/{\mathcal R};{\mathcal F})
\to T^{i} ({\mathcal O}_{X \setminus Z}/{\mathcal R};{\mathcal F}) \to
T_Z^{i+1} ({\mathcal O}_X/{\mathcal R};{\mathcal F}) \to
\dots \end{equation*} and a spectral sequence yielding
\begin{equation*}T^{p} ({\mathcal O}_X/{\mathcal
R};\mathcal{H}^q_Z({\mathcal F}))\Rightarrow T_Z^{n} ({\mathcal
O}_X/{\mathcal R};{\mathcal F})\, .
\end{equation*}
\end{theorem}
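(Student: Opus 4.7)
My plan is to carry out a standard double complex argument, the essential input being that the cotangent complex ${\mathcal L}_\bullet^{{\mathcal O}_X/{\mathcal R}}$ consists of \emph{free} ${\mathcal O}_X$-modules. First I would fix an injective resolution ${\mathcal F} \to {\mathcal I}^\bullet$ by quasi-coherent injectives and form the double complex ${\mathcal K}^{p,q} := {\mathcal Hom}_{{\mathcal O}_X}({\mathcal L}_p^{{\mathcal O}_X/{\mathcal R}}, {\mathcal I}^q)$. Because each ${\mathcal L}_p^{{\mathcal O}_X/{\mathcal R}}$ is free, the functor ${\mathcal Hom}({\mathcal L}_p^{{\mathcal O}_X/{\mathcal R}}, -)$ is exact, commutes with the support functor $\underline{\Gamma}_Z$, and takes injectives to injectives; hence each ${\mathcal K}^{p,q}$ and each $\underline{\Gamma}_Z {\mathcal K}^{p,q}$ is flabby. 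Applying $\Gamma(X, -)$, $\Gamma_Z(X, -)$ and $\Gamma(X\setminus Z, -)$ to the total complex of ${\mathcal K}^{\bullet,\bullet}$ then computes $T^\bullet({\mathcal O}_X/{\mathcal R}; {\mathcal F})$, $T^\bullet_Z({\mathcal O}_X/{\mathcal R}; {\mathcal F})$ and $T^\bullet({\mathcal O}_{X\setminus Z}/{\mathcal R}; {\mathcal F})$ respectively, the third identification using that restriction to the open $X\setminus Z$ sends ${\mathcal L}_\bullet^{{\mathcal O}_X/{\mathcal R}}$ to ${\mathcal L}_\bullet^{{\mathcal O}_{X\setminus Z}/{\mathcal R}}$.

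For the long exact sequence I would apply the standard flabby short exact sequence $0 \to \Gamma_Z(X, {\mathcal G}) \to \Gamma(X, {\mathcal G}) \to \Gamma(X\setminus Z, {\mathcal G}) \to 0$ term by term to the total complex of ${\mathcal K}^{\bullet,\bullet}$. The resulting short exact sequence of chain complexes of abelian groups yields the stated long exact cohomology sequence.

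For the spectral sequence I would instead work with the double complex $\underline{\Gamma}_Z {\mathcal K}^{p,q} = {\mathcal Hom}({\mathcal L}_p^{{\mathcal O}_X/{\mathcal R}}, \underline{\Gamma}_Z {\mathcal I}^q)$ and take $\Gamma(X, -)$ of its totalisation, again computing $T^n_Z({\mathcal O}_X/{\mathcal R}; {\mathcal F})$. Filtering by the $p$-direction and computing $q$-cohomology first, exactness of ${\mathcal Hom}({\mathcal L}_p^{{\mathcal O}_X/{\mathcal R}}, -)$ produces the sheaf ${\mathcal Hom}({\mathcal L}_p^{{\mathcal O}_X/{\mathcal R}}, {\mathcal H}^q_Z({\mathcal F}))$; a further $\Gamma(X,-)$-hypercohomology spectral sequence then assembles these into $E_2^{p,q} = T^p({\mathcal O}_X/{\mathcal R}; {\mathcal H}^q_Z({\mathcal F}))$. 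Rolling the two spectral sequences into one (a Grothendieck composition-of-derived-functors argument for $T^\bullet \circ R\underline{\Gamma}_Z$) gives the claimed abutment to $T^{p+q}_Z({\mathcal O}_X/{\mathcal R}; {\mathcal F})$.

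The main technical obstacle is the bookkeeping in the spectral sequence: since $T^p$ is itself a hypercohomology, the stated abutment is really the endpoint of two composed spectral sequences, and one must verify both the $E_2$-identification and the convergence. The crucial ingredients making this go through are the freeness of the ${\mathcal L}_p^{{\mathcal O}_X/{\mathcal R}}$ and the fact that $\underline{\Gamma}_Z$ preserves injectivity (hence flabbiness), which together are what allow the two derived functors to be composed cleanly.
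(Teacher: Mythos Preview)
The paper does not give its own proof of this theorem: it is quoted verbatim as \cite[Theorem~3.2.11]{la:for} and used as a black box throughout. So there is nothing in the paper to compare your argument against.

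That said, your outline is essentially the standard derivation one would expect (and is in the spirit of Laudal's treatment via double complexes and hypercohomology). Two small points to tighten. First, the terms ${\mathcal L}_p^{{\mathcal O}_X/{\mathcal R}}$ are only \emph{locally} free in general (the sheafified cotangent complex is built from the presheaf $U\mapsto \mathbb{L}_\bullet^{{\mathcal S}(U)/{\mathcal R}(U)}$), so your exactness and commutation-with-$\underline{\Gamma}_Z$ claims should be phrased locally; this is harmless for the argument but worth saying correctly. Second, the assertion that $\underline{\Gamma}_Z$ ``preserves injectivity'' is stronger than what you need and not obviously true in the form stated; what is true (and sufficient) is that $\underline{\Gamma}_Z$ of an injective is \emph{flabby}, hence $\Gamma$-acyclic, which is exactly what your total-complex computation requires. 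With these adjustments your double-complex/hypercohomology approach goes through, and your honest acknowledgement that the spectral sequence is really a composite of two (one for $R\underline{\Gamma}_Z$, one local-global for $T^\bullet$) is the right diagnosis of where the care is needed.
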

Here $\mathcal{H}^q_Z({\mathcal F})$ are the local cohomolgy sheaves, see Section \ref{depthcon}.

\vspace{1pt}

\subsection{Deformation functors}

The deformation theory we use in this paper is described in various degrees
of generality and readability in 
\cite{ls:cot}, \cite{sc:fun}, \cite{il:com}, \cite{la:for},
\cite{ser:def} and \cite{ha:def}. For a slightly different,  but
applicable, newer
approach see e.g.\ \cite{fm:obs}.  For the functor of invariant
deformations see \cite{ri:eq}.

Let $\mathbf{C}$ be the category of Artin local $k$-algebras with
residue field $k$. We list here the deformation functors on
$\mathbf{C}$ of interest to us.  
We denote the functor of deformations of a scheme $X$ by $\Def_X$. 
The functor of embedded deformations of a subscheme $X
  \subset Y$ is  denoted $\Hilb_{X/Y}$ and called the local Hilbert
  functor. The deformations of an
  $R$-algebra $S$ is denoted $\Def_{S/R}$ and if $R = k$ we simply
  write $\Def_{S}$. 

If $G$ is an algebraic group acting on a scheme $X$ then it also acts
on the set of deformations over $A$. If $\sigma \in G$ and $f: \mathcal{X} \to \Spec A$ is a deformation then $\sigma$ acts by
\begin{equation*}
\begin{tikzcd} X \arrow[hookrightarrow]{r}{i} \arrow{d} &
  \mathcal{X} \arrow{d}{f} 
\\ \Spec k \arrow[hookrightarrow]{r}& \Spec A 
\end{tikzcd}
\mapsto 
\begin{tikzcd} X \arrow[hookrightarrow]{r}{i \circ \sigma^{-1}} \arrow{d} &
  \mathcal{X} \arrow{d}{f} 
\\ \Spec k \arrow[hookrightarrow]{r}& \Spec A 
\end{tikzcd}
\, .
\end{equation*} 
If $G$ is linearly reductive then there is a well behaved
sub-deformation functor of 
invariant deformations $\Def_X^G$. Similarly
if $R \to S$ is equivariant for a linearly
  reductive group $G$ then there is a subfunctor  $\Def_{S/R}^G$ of
  invariant deformations. If $G$ is a quasi-torus so that the action
  corresponds to a grading by an abelian group $C$ then we often write
  $\Def_{S/R}^0$ for the degree $0 \in C$ deformations instead of
  $\Def_{S/R}^G$. 

The tangent and obstruction spaces for $\Def_X$ are $T^i_X =
T^i({\CO_X}/k;\CO_X)$ for $i=1$ and $2$. If $f: X \to Y$ is a closed
embedding then the tangent and obstruction spaces for $\Hilb_{X/Y}$ are $
T^i({\CO_X}/f^{-1} \CO_Y;\CO_X)$ for $i=1$ and $2$. The tangent and
obstruction spaces for $\Def_{S/R}$ are $ T^i(S/R;S)$ for $i=1$ and
$2$ and finally the tangent and
obstruction spaces for $\Def_{S/R}^G$ are $ T^i(S/R;S)^G$ for $i=1$ and
$2$. If $D$ is one of these deformation functors let $T^i_D$, $i=1,2$,
denote the corresponding tangent and obstruction space.

Recall that a morphism $F \to G$ of functors is \emph{smooth} if for any
surjection $B \to A$ in $\mathbf{C}$, the morphism
$$F(B) \to F(A) \times_{G(A)} G(B)$$
is surjective.
A functorial map of deformation
functors $D \to D^\prime$ induces maps $T^i_D \to T^i_{D^\prime}$. We
use throughout the standard criteria for smoothness, namely that
$T^1_D \to T^1_{D^\prime}$  is surjective and $T^2_D \to
T^2_{D^\prime}$ is 
injective. See \cite[Proposition 2.3.6]{ser:def} for a more general statement and proof.

If $T^1_D \to T^1_{D^\prime}$  is an isomorphism and $T^2_D \to
T^2_{D^\prime}$ is injective it is not necessarily true that $D \to
D^\prime$ is an isomorphism. This is true  if $D$ and $D^\prime$
satisfy Schlessinger's condition $\text{H}_4$, see
\cite[2.11 and 2.15]{sc:fun}. If $X \subset Y$ is a closed subscheme
then $\Hilb_{X/Y}$ satisfies $\text{H}_4$. In general $\Def_X$ does
not, but in our case  we will not only have an isomorphism at the
tangent level but also surjectivity of infinitesimal  automorphisms
allowing us to state that the functors we compare are isomorphic
(Lemma \ref{defGlemma}).

\vspace{1pt}

\section{$G$-quadruples}
\subsection{Definitions} We consider now a standard situation in invariant
theory. Definitions of different types of quotients in algebraic geometry vary slightly in the literature. For us the best one is the original notion of good (and geometric) quotient due to Seshadri. 
\begin{definition}[\cite{ses:quo} Definition 1.5] Let $G$ be an affine algebraic group acting on an algebraic scheme $Y$ and $\pi: Y \to X$ a morphism. Then we say that $\pi$ is a \emph{good quotient} if the following properties hold:
\begin{list}{\textup{(\roman{temp})}}{\usecounter{temp}}
\item $\pi$ is a surjective, affine $G$-invariant morphism
\item ($\pi_\ast \CO_Y)^G = \CO_Y$
\item if $W \subseteq Y$ is closed and $G$-invariant then $\pi(W)$ is closed in $X$
\item if $W_1$ and $W_2$ are closed, disjoint and $G$-invariant in $Y$, then $\pi(W_1)$ and $\pi(W_2)$ are disjoint in $X$.
\end{list} 
\end{definition}
In this case one writes $X=Y//G$. If $G$ is reductive acting algebraically on $Y=\Spec(A)$ then $Y \to \Spec(A^G)$ is a good quotient. (See e.g.\ \cite[Theorem 1.1]{ses:quo}.)
If $\pi:Y \to Y//G$ is a good quotient and all $G$-orbits are closed then $\pi$ is called a \emph{geometric} quotient (\cite[Definition 1.6]{ses:quo}). The condition closed orbits is equivalent to that $\pi$ induces a bijection between $G$-orbits and $Y//G$.
\begin{remark} It will be essential for us that the good quotient map is affine. In some definitions of good this is not included. That would allow e.g.\ the non-separated quotient $(\mathbb{C}^2 \setminus \{(0,0\})//C^\ast$ where $C^\ast$ acts by $\diag(\lambda, \lambda^{-1}).$\end{remark}

 It is convenient to give the situation we study a name.
\begin{definition} \label{ferry} Let $G$ be a linearly reductive 
algebraic group. Let
$X$ be a noetherian $k$-scheme, 
$Z \subsetneq X$ a (possibly empty) closed subset, 
$S$ a finitely generated $k$-algebra on which $G$ acts and 
$J \subseteq S$ an ideal such that the $G$ action restricts to an action on $\Spec S \setminus V(J)$.  We call $(X,Z,S,J)$ a \emph{$G$-quadruple} if the following properties are satisfied:
If $X^\prime =
\Spec S \setminus V(J)$ and $U = X \setminus Z$ then there is a commutative
diagram
\begin{equation*}
\begin{tikzcd} U^\prime \arrow[hookrightarrow]{r} \arrow{d}{\pi_{\mid
U^\prime}} &X^\prime \arrow[hookrightarrow]{r} \arrow{d}{\pi} &\Spec S
\\ U \arrow[hookrightarrow]{r}&X &
\end{tikzcd}
\end{equation*} where $U^\prime = \pi^{-1}(U)$ and 
\begin{list}{\textup{(\roman{temp})}}{\usecounter{temp}}
\item $\pi$ is a good quotient of $X^\prime$ by the action $G$
\item $\pi_{\mid U^\prime}$ is a  geometric quotient and smooth of
  relative dimension $\dim G$
\item $\depth_J S \ge 1$. 
\end{list} 
The sheaf of algebras $\CS = \pi_\ast \CO_{X^\prime}$ is called
\emph{the associated sheaf of algebras} of the $G$-quadruple.
\end{definition}

We will throughout the rest of this paper use the above
notation to refer to the various objects in the definition. 

\begin{remark} A $G$-quadruple may be constructed using Geometric Invariant
Theory. (See \cite{mfk:geo},  although the presentation in
\cite{mu:int} applies more directly to our situation.) Starting with
the action of $G$ on a reduced and irreducible affine scheme $Y = \Spec S$ and a character
$\chi$ of $G$, let $Y^{ss}$ and $Y^s$ be the semistable and stable points of
$Y$ with respect to $\chi$. This gives a $G$-quadruple by letting
$X^\prime = Y^{ss}$, $J = I(Y \setminus Y^{ss})$, $X = Y^{ss}//G$, 
$U^\prime$ equals the open subset of $Y^s$ where the quotient map is
smooth, $U = U^\prime/G$ and $Z = X \setminus U$. 
\end{remark}

\subsection{Examples} Here are some examples.

\begin{example} \emph{The $\Spec$ construction.} If $X = \Spec S$ and
  $G$ is trivial then $(X,\emptyset,S,(1))$ is a $G$-quadruple.
\end{example}

\begin{example} \emph{The usual $\Proj$ construction.} If $X = \Proj
S$ for  $S$ finitely generated $\ZZ$-graded $k$-algebra
generated in degree 1 with irrelevant ideal $\mathfrak{m}$ and $G=
k^\ast$ then $(X,\emptyset,S,\mathfrak{m})$ is a $G$-quadruple.
\end{example}

\begin{example} \label{wps} \emph{The $\Proj$ construction in general.} 
Let $S$ be a finitely generated $\ZZ_{+}$ graded $k$-algebra with $S_0
= k$ and set $X = \Proj S$. Assume $S =R/I$ where $R = k[x_0, \dots ,
x_n]$ is graded with $\deg x_i = 
q_i \in \N$. Let $\mathfrak{m} = (x_0, \dots , x_n)$ and suppose
$\depth_\mathfrak{m} S \ge 1$. This defines an embedding of $X$ into
the weighted projective space $\PP(\mathbf{q}) = \PP(q_0, \dots ,
q_n)$.

Following Miles Reid we say that $\PPq$ is \emph{well formed} if no
$n$ of the $q_0, \dots , q_n$ have a common factor. Similarly we
will say that the corresponding grading on $S$ is well formed. For every $\mathbf{q} \in \N^{n+1}$ there exists a well formed grading $\bf q'$ such that $\PP(\bf q') \simeq \PP(\bf q)$, see
e.g.\ \cite[Proposition 1.3]{de:esp} or \cite[1.3.1]{do:wei}.
Let $J^\prime_k$ be the ideal of $R$ generated by $\{x_i : k \nmid q_i\}$ and
set $$J^\prime = \bigcap_{k \ge 2} J^\prime_k = \bigcap_{\substack{p \text{ prime}
\\ p \mid \lcm(q_0, \dots , q_n)}} J^\prime_p \, .$$ The singular locus
of the well formed $\PPq$ is $Z = V(J^\prime) \subseteq \PPq$ and satisfies $\codim_Z \PPq \ge 2$.
 
Then
$(X,X \cap Z,S,\mathfrak{m})$ is a $G$-quadruple.
\end{example}

\begin{example} \label{TVex} \emph{The Cox construction for complex toric
varieties.} This is our main example and it includes the previous
ones, see e.g.\ \cite{co:hom} and \cite[Chapter 5]{cls:tor}. To fix
notation for the rest of this paper we recall the construction. Using the
standard notation for toric geometry let $X = X_\Sigma$ be an
$n$-dimensional toric variety
given by a fan $\Sigma$ in $N_\RR$. We assume here for simplicity that
$X_\Sigma$ has no torus factors, but this is not necessary for
applying our results (see \cite[5.1.11]{cls:tor}). Let $\Sigma(1) =
\{\rho_1, \dots , \rho_N\}$ be the set of rays of $\Sigma$ and let
$v_i$ be the primitive generator of $\rho_i \cap N$. The
divisor class group of $X$ is given by the exact sequence 
$$0 \to \ZZ^n \xrightarrow{b} \ZZ^N \to \Cl(X) \to 0$$
where $b(u) = (\langle u, v_1\rangle  , \dots , \langle u,
v_N\rangle )$. The ring $S = \CC[x_\rho
: \rho \in \Sigma(1)]$  is naturally graded by the abelian group 
$\Cl(X)$ and with this grading it is called the \emph{Cox ring} or
\emph{total homogeneous coordinate ring}  of $X$. 

For each cone $\sigma$ in $\Sigma$ there is a monomial 
$$\prod_{\rho_i \nsubseteq \sigma} x_i \in S$$
and define $B(\Sigma)$ to be the ideal of $S$ generated by these. It
is called the \emph{irrelevant ideal} of the Cox ring. Let $Z(\Sigma)
= V(B(\Sigma)) \subseteq \Spec S$. Let $G$ be the
quasi-torus $\Hom_{\ZZ}(\Cl(X_\Sigma), \CC^\ast)$. The theorem of Cox
states that $X$ is an almost geometric quotient for the action of $G$
on $\Spec S \setminus Z(\Sigma)$. 

Let $\Sing(X_\Sigma)$ be the singular locus, i.e.\
$$\Sing(X_\Sigma) = \bigcup_{\substack{\sigma \in \Sigma
\\ \sigma \text{ not smooth}}} V(\sigma)$$ where $V(\sigma)$ is the
closure of the torus orbit corresponding to $\sigma$. The smooth locus is
given by the subfan consisting of smooth cones in $\Sigma$. Then it follows 
from the construction (see e.g.\ \cite[Exercise 5.1.10]{cls:tor}) that
$(X_\Sigma,\Sing(X_\Sigma),S,B(\Sigma))$ is a $G$-quadruple.
Note that a $G$ invariant $S$-module $M$ is the same thing as a $\Cl(X)$
graded $S$-module and that $M^G = M_0$, the degree $0 \in \Cl(X)$ part
of $M$. Moreover if $\alpha = [D] \in \Cl(X)$ then $S_\alpha =
\Gamma(X,\CO_X(D))$. In particular the associated sheaf of algebras of the $G$-quadruple is in this case 
$$\CS = \bigoplus_{[D] \in \Cl(X)} \CO_X(D)\, .$$
\end{example}

\begin{example} \label{qs} \emph{Quotient singularities.} Let $G
\subset \GL_n(\CC)$ be a finite group without pseudo-reflections. Then
$(\CC^n/G,\Sing(\CC^n/G),\CC[x_1, \dots , x_n],(1))$ is a $G$-quadruple.
\end{example}

\begin{example} \label{grass} \emph{Grassmannians.} Consider the Grassmannian
  $\mathbb{G}(d,n)$ and let $S$ be the polynomial ring on variables
  $x_{ij}$ for $i= 1, \dots , d$ and $j = 1, \dots , n$. Thus $G =
  \GL_d$ acts on $S$ by viewing the variables as entries in a $d
  \times n$ matrix. Let $J$ be the ideal generated by the
  $\binom{n}{d}$ maximal minors in such a matrix. Then
  $(\mathbb{G}(d,n), \emptyset, S, J)$ is a $G$-quadruple.
\end{example}

\begin{example} \emph{Moduli spaces.}  Compactifications of moduli spaces
constructed using GIT on affine schemes give $G$-quadruples as
explained above. Among these are the moduli of smooth
hypersurfaces in $\PP^n$, vector bundles and quiver representations. 
\end{example}

\subsection{ $G$-subquadruples} We may also define $G$-subquadruples. 

\begin{definition} \label{subferrydef} Given a $G$-quadruple
  $(Y,W,R,J)$ and a $G$-invariant ideal $I \subseteq R$ 
we may construct a new $G$-quadruple as
follows. Let $S = R/I$,  set $\bar{J} = (I +  J)/I$ and assume
$\depth_{\bar{J}} S \ge 1$. Let $\pi : Y^\prime \to Y$ be the good quotient as in
Definition \ref{ferry}. The quasi-affine scheme $X^\prime = \Spec S
\setminus V(\bar{J})$ is a closed $G$-invariant subset of $Y^\prime$.  Thus
$\pi: X^\prime \to X = \pi(X^\prime)$ is a good quotient where $X$ has
structure sheaf $(\pi_\ast \CO_{X^\prime})^G$. It follows that $(X, X
\cap W, S, \bar{J})$ is also a $G$-quadruple. We call it the
\emph{$G$-subquadruple induced by $I$}.
\end{definition}

A $G$-subquadruple determines a diagram as
in Figure \ref{subferryfig} where $Z = X \cap W$, $U_X = X \cap
U_Y$ and by definition 
$U^\prime_X = \pi^{-1}(U_X)$. Let $\CS = \pi_\ast \CO_{X^\prime}$ be
the associated sheaf of algebras of $(X, Z, S, \bar{J})$. Let $f: X \to Y$ be the closed
embedding. Let
$I_W$ be the radical ideal of $\Spec R \setminus U_Y^\prime$ and set
$I_Z = (I_W + I)/I$. We will use this notation and the notation in the
diagram throughout.

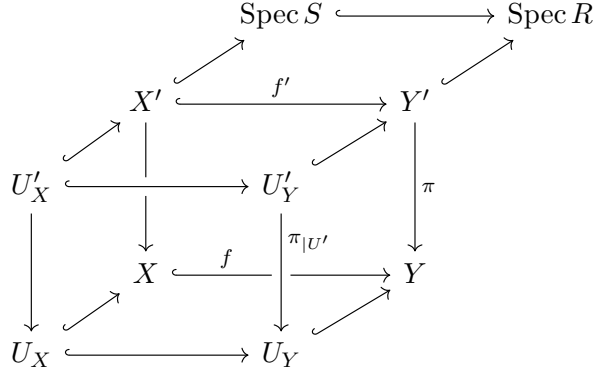
\begin{figure} \centering
\begin{tikzcd}[row sep=scriptsize, column sep=scriptsize] && \Spec S
\arrow[ hookrightarrow]{rr}& & \Spec R\\ & X^\prime
\arrow[hookrightarrow]{rr}{f^\prime}
\arrow{dd}\arrow[hookrightarrow]{ur} & &Y^\prime \arrow{dd}{\pi}
\arrow[hookrightarrow]{ur} \\ U^\prime_X \arrow[crossing over,
hookrightarrow]{rr}\arrow{dd} \arrow[hookrightarrow]{ur} & & U_Y^\prime
\arrow[hookrightarrow]{ur}\\ & X \arrow[hookrightarrow]{rr}[near
start]{f} & & Y \\ U_X \arrow[hookrightarrow]{rr} \arrow[hookrightarrow]{ur} & & U_Y
\arrow[crossing over, leftarrow]{uu}[near end, swap]{\pi_{\mid
U^\prime}} \arrow[hookrightarrow]{ur} \\
\end{tikzcd}
\caption{The diagram for $G$-subquadruples.}
\label{subferryfig}
\end{figure}

Now $\pi_{\mid U^\prime_Y}$ is a
geometric quotient, so in particular inverse images of points are
$G$-orbits. Thus for any $G$-invariant subset $V \subseteq U^\prime_Y$
we have $\pi^{-1}(\pi(V)) = V$. Thus $X^\prime \cap U^\prime_Y =
\pi^{-1}(\pi(X^\prime \cap U^\prime_Y)) = \pi^{-1}(\pi(X^\prime) \cap
\pi(U^\prime_Y))$ since $x, y \in U^\prime_Y$ and $\pi(x) =
\pi(y)$ implies $x$ and $y$ are in the same $G$-orbit. This shows that
$U^\prime_X = X^\prime \cap U^\prime_Y = \Spec S \setminus V(I_Z)$, so
the above diagram is commutative. Note that
the  vertical square involving $U^\prime_X,U^\prime_Y, U_X,U_Y$ is Cartesian, but the one involving $X^\prime, Y^\prime, X,Y$ need not be. 

\subsection{Depth and local cohomology} \label{depthcon} We recall the connection between depth and local cohomology for rings and
schemes which we use through out. See \cite[Section 3]{gr:loc} and
\cite[Exp.\! 1-3]{gr:sga2} for proofs and details. If $S$ is a Noetherian
ring, $I$ an ideal of $S$, and $M$ an $S$-module (not necessarily finitely
generated) we define $\depth_I(M )$ by
\begin{equation*}
 \depth_I(M)=\max\{j \in \ZZ \cup \{\infty\} : H_I^{j-1}(M) = 0\} \ .
\end{equation*}
where $ H_I^{i}(-)$ is the right derived functor of $ \Gamma_I(-)$ and
$ \Gamma_I(M) = \ker(M \to \Gamma(\Spec S \setminus V(I),M))$. By Proposition
2.4 of \cite[Exp.\! 3]{gr:sga2}, if reg denotes the length of a maximal
$M$-regular sequence in $I$ (letting reg = $\infty$ if $IM=M$), then
$H_I^j(M) = 0$ for every $j < \rm{reg}$, and if $M$ is finitely generated then
reg $ = \depth_I(M)$. 

For a quasi-coherent sheaf $\mathcal{F}$ on a noetherian
scheme $X$ we  have the local cohomology groups  $H^i_{Z}(X, \mathcal{F})$ and the local cohomology sheaves $\mathcal{H}_Z^i(\mathcal{F})$ with the spectral sequence
\begin{equation*}H^p(X,\mathcal{H}_Z^q(\mathcal{F}))\Rightarrow H^n_{Z}(X, \mathcal{F})\, .
\end{equation*}
We define depth as in
the ring case by
\begin{equation*}
 \depth_Z(\mathcal{F}) = \inf \{i \in \ZZ \cup \{\infty\} :  \mathcal{H}_Z^i(\mathcal{F}) \ne 0 \}
\end{equation*}
 (see  \cite[Proposition 2.2]{gr:loc} and compare with \cite[Theorem 3.8]{gr:loc}). 
 Note that if $\mathcal{F}$ is coherent, we have  $\depth_Z \mathcal{F} = \inf_{x \in Z} \depth \mathcal{F}_x$  where $\depth \mathcal{F}_x$ is the depth of
$\mathcal{F}_x$ as $\CO_{X,x}$ module (see  \cite[Corollary 3.6]{gr:loc}).
 
 If $U = X \setminus Z$ and $j : U \to X$ is the inclusion then there are exact sequences 
 \begin{equation*} \cdots \to H^i_Z(X, \mathcal{F}) \to H^i(X , \mathcal{F}) \to H^i(U, \mathcal{F}) \to H^{i+1}_Z(X, \mathcal{F}) \to \cdots
 \end{equation*} 
 and
  \begin{align*} 0 \to \mathcal{H}^0_Z(\mathcal{F}) \to \mathcal{F} & \to  j_{\ast}\mathcal{F}_{\mid {U}} \to \mathcal{H}^1_Z(\mathcal{F}) \to 0 \\
  \mathcal{H}^{i+1}_Z(\mathcal{F}) & \simeq R^ij_\ast \mathcal{F}_{\mid {U}} \quad \text{for $i > 0$} \, .
\end{align*}
The condition $ \depth_Z(\mathcal{F}) \ge 2$ will appear many times and we see from the above that it is equivalent to that the natural map $\mathcal{F} \to j_{\ast}\mathcal{F}_{\mid {U}}$ is an isomorphism.

Let $(X, Z, S, \bar{J})$ be a $G$-quadruple with associated sheaf of algebras $\CS$, for instance a
$G$-subquadruple of $(Y,W,R,J)$ induced by $I \subseteq R$, let
$ Z^\prime = V(I_Z)\cap X^\prime $ and note that
$X^\prime = \Spec S \setminus V(\bar{J})$ and
$U^\prime_X = X^\prime \setminus Z^\prime $ are open sets of $ \Spec S$. Since
$\depth_Z \CS$ is so often used in this paper and $\CS$ is in general not 
coherent, we want to relate it to
$\depth_{Z^\prime} \CO_{X^\prime}.$ Note that we have
\begin{equation*}
 H^i(X, \CS) \simeq  H^i(X^\prime, \CO_{X^\prime}) \quad \text{and} \quad  H^{i}(U_X,
 \CS_{\mid U_X}) \simeq H^i(X^\prime \setminus Z^\prime, \CO_{X^\prime})
\end{equation*} for $i \ge 0$ 
because $\pi$ is affine. Moreover using \cite[Corollary 5.6]{gr:loc} and $\pi$
affine we get that
$$H^i_{Z \cap V}(V, \pi_\ast\CO_{X^\prime}) \simeq H^i_{Z^\prime \cap
  V^\prime}(V^\prime, \CO_{X^\prime})$$
for every open affine $V \subseteq X$ and $V^\prime = \pi^{-1}(V)$ open affine
of $\Spec S$. It follows that
$\mathcal{H}^i_Z(\CS)= \pi_\ast(\mathcal{H}^i_{Z^\prime}(\CO_{X^\prime}))$  and
$$\depth_Z \CS= \depth_{Z^\prime} \CO_{X^\prime} \, .$$ 
Notice that the latter is defined by a coherent sheaf, thus characterized by
the length of maximal regular sequences. We also have
\begin{equation}\label{lociso}\mathcal{H}^i_{Z^\prime}(\CO_{X^\prime}) \simeq \widetilde{
  H^i_{Z^\prime}(X^\prime, \CO_{X^\prime})}_{\mid {X^\prime}} \simeq
\widetilde{H^{i}_{I_Z}(S)}_{\mid {X^\prime}}\end{equation}
because $\widetilde{H^{i}_{\bar{J}}(S)}_{\mid {X^\prime}}=0$. More generally
since $ \Spec S $ is affine there is a diagram 
\begin{equation*}
  \begin{tikzcd}  &  H^i(X^\prime, \CO_{X^\prime}) \arrow[r, "\sim"]\arrow{d} &
    H^{i+1}_{\bar{J}}(S) \\ 
    &  H^i(X^\prime \setminus Z^\prime, \CO_{X^\prime}) \arrow[r, "\sim"] & H^{i+1}_{I_Z}(S)
\end{tikzcd}
\end{equation*} for $i > 0$; for $i=0$ the horisontal maps are surjective with
kernels $S$ and $\coker(H^0_{I_Z}(S) \to S)$ respectively. Since the vertical
map fits into 
a long exact sequence involving the local cohomology group
$H^i_{Z^\prime}(X^\prime, \CO_{X^\prime})$ we get an exact sequence of $S$-modules
\begin{equation} \label{loexse} \cdots \longrightarrow H^i_{Z^\prime}(X^\prime,
  \CO_{X^\prime}) \longrightarrow H^{i+1}_{\bar{J}}(S) \longrightarrow
  H^{i+1}_{I_Z}(S) \longrightarrow H^{i+1}_{Z^\prime}(X^\prime,
  \CO_{X^\prime}) \longrightarrow \cdots\ .
\end{equation}

  \begin{remark}
We may relate $\depth_{I_Z} S$, $\depth_{\bar{J}} S$ and
  $\depth_Z \CS = \depth_{Z^\prime} \CO_{X^\prime}$. We get
$$\depth_Z \CS \ge d \quad \text{and} \quad \depth_{\bar{J}} 
  S \ge d \, \Longleftrightarrow \, \depth_{I_Z} S \ge d \, .$$
  Indeed, the implication $\Leftarrow$ follows from $\depth_{I_Z} S \le  \depth_{\bar{J}} 
  S$ (due to $I_Z \subseteq \bar{J}$) and \eqref{lociso}. The  implication $\Rightarrow$ follows from \eqref{loexse} and the spectral sequence above.
\end{remark}

\section{Deformations of the embedded scheme - $\Hilb_{X/Y}$}

We begin with a general lemma.
\begin{lemma}\label{sit1} If in a Cartesian square of schemes
\begin{equation*}
\begin{CD} X^\prime @>f^\prime>> Y^\prime \\ @VVV @VV\pi V\\ X @>f>> Y
\end{CD}
\end{equation*} the morphism $\pi$ is flat and affine and $f$ is a
closed immersion then 
$$T^{i} ({\mathcal O}_X/ f^{-1}\mathcal{O}_Y;\pi_\ast\CO_{X^\prime})
\simeq T^{i} ({\mathcal
O}_{X^\prime}/{f^\prime}^{-1}\mathcal{O}_{Y^\prime};\CO_{X^\prime})$$
for all $i \ge 0$.
\end{lemma}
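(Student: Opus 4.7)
The plan is to reduce the statement to commutative algebra via the affine hypothesis on $\pi$ and then invoke flat base change for the cotangent complex. Since the question is local on $Y$ (hence on $X$), we may work affine-locally. Write $Y=\Spec B$ and $Y'=\Spec B'$ with $B'$ a flat $B$-algebra (because $\pi$ is flat and affine). As $f$ is a closed immersion, $X=\Spec A$ with $A=B/I$ for some ideal $I\subseteq B$, and the Cartesian property yields $X'=\Spec A'$ with $A'=A\otimes_B B'\simeq B'/IB'$. In particular $A\to A'$ is flat, and as a sheaf of $\CO_X$-algebras $\pi_*\CO_{X'}$ is the quasi-coherent sheaf associated to $A'$.

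The key ingredient is the flat base change theorem of Andr\'e (see \cite{an:hom}): there is a natural quasi-isomorphism
\begin{equation*}
\mathbb L_{\bullet}^{A/B}\otimes_A A'\;\xrightarrow{\sim}\;\mathbb L_{\bullet}^{A'/B'}.
\end{equation*}
Combined with the standard Hom-tensor adjunction
\begin{equation*}
\Hom_{A'}\!\bigl(\mathbb L_{\bullet}^{A/B}\otimes_A A',\,A'\bigr)\;\simeq\;\Hom_{A}\!\bigl(\mathbb L_{\bullet}^{A/B},\,A'\bigr),
\end{equation*}
taking cohomology yields $T^i(A'/B';A')\simeq T^i(A/B;A')$ for every $i\ge 0$. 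Passing to the associated sheaves on $X$ (using that $\CT^i$ is the sheafification of the presheaf $U\mapsto T^i$) this gives an isomorphism
\begin{equation*}
\CT^i(\CO_X/f^{-1}\CO_Y;\pi_*\CO_{X'})\;\simeq\;\pi_*\,\CT^i(\CO_{X'}/{f'}^{-1}\CO_{Y'};\CO_{X'}).
\end{equation*}

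To promote the sheaf isomorphism to the global hypercohomology groups, compare the two local-global spectral sequences \eqref{local-global}. Since $\pi$ is affine, $R^p\pi_*\CF=0$ for $p>0$ and quasi-coherent $\CF$, so $H^p(X',\CF)=H^p(X,\pi_*\CF)$; together with the sheaf identification above this produces an isomorphism between the $E_2$-pages of the two spectral sequences, and hence between their abutments, giving the claimed $T^i(\CO_X/f^{-1}\CO_Y;\pi_*\CO_{X'})\simeq T^i(\CO_{X'}/{f'}^{-1}\CO_{Y'};\CO_{X'})$. The main technical point — and only real obstacle — is ensuring that Andr\'e's flat base change applies in the precise form needed; once this is in hand the rest is a formal adjunction and spectral sequence comparison.
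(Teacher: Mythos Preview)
Your argument is correct and follows essentially the same route as the paper: reduce to the affine cocartesian square, use Andr\'e's flat base change to obtain $T^i(A/B;A')\simeq T^i(A'/B';A')$ and hence the sheaf isomorphism $\CT^q(\CO_X/f^{-1}\CO_Y;\pi_\ast\CO_{X'})\simeq\pi_\ast\CT^q(\CO_{X'}/{f'}^{-1}\CO_{Y'};\CO_{X'})$, and then compare the two local--global spectral sequences using $H^p(X',\CF)\simeq H^p(X,\pi_\ast\CF)$ for $\pi$ affine. The paper's proof is the same in substance, only phrased more tersely.
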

\begin{proof} We first compare the corresponding sheaves. The diagram is Cartesian so $f^\prime$ is also a closed immersion. Closed immersions are affine and the base change of an affine morphism is affine, so all
morphisms in the diagram are affine. 

Let $V = \Spec B \subseteq Y$ be an open affine subset, $U = \Spec A = f^{-1}(V)$, $\pi^{-1}(V) = \Spec B^\prime$ and $\pi^{-1}(U) = \Spec A^\prime$.  The diagram locally corresponds to a cocartesian diagram of rings
\begin{equation*}
\begin{CD} A^\prime @<<< B^\prime \\ 
@AAA @AA\pi^{\#}  A \\ A
  @<<< B 
\end{CD} 
\end{equation*}
 with $\pi^{\#} $ flat, so $T^q(A/B; A^\prime) \simeq
T^q(A^\prime/B^\prime; A^\prime)$ for all $q \ge 0$ (\cite[Appendice, Proposition 76]{an:hom}). Applying the corresponding cotangent cohomology sheaves we have \begin{align*}\mathcal{T}^{q} ({\mathcal O}_X/
f^{-1}\mathcal{O}_Y;\pi_\ast\CO_{X^\prime})(U) &= T^q(A/B; A^\prime) \\
\mathcal{T}^{q} ({\mathcal
O}_{X^\prime}/{f^\prime}^{-1}\mathcal{O}_{Y^\prime};\CO_{X^\prime})(\pi^{-1}(U)) &= T^q(A^\prime/B^\prime; A^\prime)\end{align*}
thus
$$\mathcal{T}^{q} ({\mathcal O}_X/
f^{-1}\mathcal{O}_Y;\pi_\ast\CO_{X^\prime}) \simeq \pi_\ast
\mathcal{T}^{q} ({\mathcal
O}_{X^\prime}/{f^\prime}^{-1}\mathcal{O}_{Y^\prime};\CO_{X^\prime})$$
for all $q \ge 0$. 

Again because the maps are affine, $H^p(X, \pi_\ast \mathcal{F}) \simeq H^p(X^\prime,
\mathcal{F})$ for any quasi-coherent $\mathcal{F}$. Thus all terms in
the two spectral sequences \eqref{local-global} for the two cohomology
groups are isomorphic and the result follows.
\end{proof}

\begin{lemma} \label{klep} Let $(X, Z, S, \bar{J})$ be a $G$-subquadruple of
  $(Y,W,R,J)$ induced by $I \subseteq R$ with associated sheaf of algebras
  $\CS$. If $\depth_Z \CS \ge 1$ and the natural map
  $H^0(X, \CS) \rightarrow H^0(U_X, \CS_{\mid U_X})$ is surjective, equivalently
  $H^0(X, \CS) \simeq H^0(U_X, \CS_{\mid U_X})$, then
\begin{list}{\textup{(\roman{temp})}}{\usecounter{temp}}
\item $\depth_Z \CO_X \ge 1$
\item $H^0_{\bar{J}}(S) = H^0_{I_Z}(S) = 0$
\item there is an isomorphism $H^1_{\bar{J}}(S) \simeq H^1_{I_Z}(S)$.
\end{list} 
\end{lemma}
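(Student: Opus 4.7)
The proof plan splits cleanly into the three parts in the order they are stated, with a single key input reused throughout: the fact that $\pi$ is affine, so $\pi_\ast$ is exact and $H^i(X,\CS) \simeq H^i(X',\CO_{X'})$, combined with the Reynolds operator for the linearly reductive $G$.

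First I would establish (i) by exhibiting $\CO_X$ as a direct summand of $\CS$ as an $\CO_X$-module. Because $G$ is linearly reductive, the Reynolds operator gives a canonical $\CO_X$-linear retraction $R:\CS \twoheadrightarrow \CS^G = \CO_X$, so $\CS \simeq \CO_X \oplus \ker R$ as sheaves of $\CO_X$-modules. Local cohomology with support in $Z$ respects direct sums, hence $H^i_Z(\CO_X)$ is a direct summand of $H^i_Z(\CS)$ for every $i$. The hypothesis $\depth_Z \CS \ge 2$ forces $H^i_Z(\CS)=0$ for $i<2$, whence the same vanishing for $\CO_X$, which is exactly $\depth_Z \CO_X \ge 2$.

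For (ii) and (iii) I would compute both pairs of local cohomology groups from a single exact sequence and match them. Since $\Spec S$ is affine, the long exact sequence for local cohomology with support in a closed subscheme degenerates to
\begin{equation*}
0 \to H^0_{V}(S) \to S \to \Gamma(\Spec S \setminus V,\CO) \to H^1_V(S) \to 0,
\end{equation*}
applied with $V=V(\bar J)$ (so $\Spec S \setminus V = X^\prime$) and with $V=V(I_Z)$ (so $\Spec S \setminus V = U^\prime_X$). The map $S \to \Gamma(U^\prime_X,\CO_{X^\prime})$ factors through $S \to \Gamma(X^\prime,\CO_{X^\prime})$ via the restriction $\Gamma(X^\prime,\CO_{X^\prime}) \to \Gamma(U^\prime_X,\CO_{X^\prime})$, because $U^\prime_X \subseteq X^\prime$.

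The key step, and the one I expect to be the main (though mild) obstacle, is showing that this last restriction is an isomorphism. Since $\pi$ is affine, $\Gamma(X^\prime,\CO_{X^\prime})=\Gamma(X,\CS)$ and $\Gamma(U^\prime_X,\CO_{X^\prime})=\Gamma(U,\CS|_U)$, and the restriction on $X^\prime$ coincides with the restriction $\Gamma(X,\CS)\to\Gamma(U,\CS|_U)$ on $X$. The low-degree piece of the local cohomology sequence on $X$ with support in $Z$ reads
\begin{equation*}
0 \to H^0_Z(\CS) \to H^0(X,\CS) \to H^0(U,\CS|_U) \to H^1_Z(\CS),
\end{equation*}
and both outer terms vanish by $\depth_Z \CS \ge 2$. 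Hence the restriction is an isomorphism. Comparing kernels and cokernels of the two maps $S \rightrightarrows \Gamma(X^\prime,\CO_{X^\prime}) \simeq \Gamma(U^\prime_X,\CO_{X^\prime})$ then yields $H^0_{\bar J}(S)=H^0_{I_Z}(S)$ and $H^1_{\bar J}(S)\simeq H^1_{I_Z}(S)$, and the first group vanishes by the standing hypothesis $\depth_{\bar J} S\ge 1$ from Definition \ref{subferrydef}, completing (ii) and (iii).
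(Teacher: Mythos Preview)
Your proof is correct and follows essentially the same approach as the paper: for (i) you use exactness of taking $G$-invariants (via the Reynolds operator), and for (ii)--(iii) you set up the same commutative diagram comparing the two local cohomology sequences at $\bar J$ and $I_Z$, showing the middle vertical restriction map $H^0(X,\CS)\to H^0(U_X,\CS|_{U_X})$ is an isomorphism via $\depth_Z\CS\ge 2$. The only cosmetic difference is that the paper already places $H^0_{\bar J}(S)=0$ in the top row (using $\depth_{\bar J}S\ge 1$ up front) and notes $I_Z\subseteq\bar J$ explicitly, whereas you carry $H^0_{\bar J}(S)$ along and deduce its vanishing at the end.
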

\begin{proof} Suppose $\depth_Z\CS \ge 1$ and   $H^0(X, \CS) \rightarrow H^0(U_X, \CS_{\mid U_X})$ is surjective.
Since taking invariants is exact (i) follows
  from $\mathcal{H}^i_Z(\CS)^G = \mathcal{H}^i_Z(\CO_X)$. We have $\depth_{\bar{J}} S \ge 1$ and
  $I_Z \subseteq \bar{J}$ by the assumption that $(X,Z,S,\bar{J})$ is a
  $G$-subquadruple, so we get (ii) and (iii) by considering the commutative
  diagram
\begin{equation*}
\begin{tikzcd} & 0 \arrow{r} & S \arrow{r} \arrow[equals]{d} & H^0(X,
  \CS) \arrow{r} \arrow{d} &
  H^1_{\bar{J}}(S)  \arrow{r} & 0\\ 
0 \arrow{r} & H^0_{I_Z}(S) \arrow{r} & S \arrow{r} & H^0(U_X,
\CS_{\mid U_X}) \arrow{r} & H^1_{I_Z}(S)  \arrow{r} & 0
\end{tikzcd}
\end{equation*}
where the middle vertical map is an isomorphism by $H^0_{Z}(X, \mathcal{S})
\simeq 
H^0(X,\mathcal{H}_Z^0(\mathcal{S})) = 0$ and 
assumption. 

Finally, to show that $\depth_Z \CS \ge 1$ and 
  $H^0(X, \CS) \rightarrow H^0(U_X, \CS_{\mid U_X})$ surjective is equivalent to
  $H^0(X, \CS) \simeq H^0(U_X, \CS_{\mid U_X})$, suppose the vertical map is an isomorphism. Then the diagram above (or the remark in Section \ref{depthcon}) implies $H^0_{I_Z}(S)=0$, hence $\depth_Z\CS \ge 1$ by \eqref{lociso}.
\end{proof}

\begin{theorem} \label{hilbthm} If $(X, Z, S, \bar{J})$ is a $G$-subquadruple
  of $(Y,W,R,J)$ induced by $I \subseteq R$ with associated sheaf of algebras
  $\CS$  and
\begin{list}{\textup{(\roman{temp})}}{\usecounter{temp}}
\item 
$H^0(X, \CS) \simeq H^0(U_X, \CS_{\mid U_X})$ and \ $\mathcal{H}^1_Z(\CO_X)=0$
\rm(e.g. $\depth_Z \CS \ge 2$\rm),
\item $\Hom_R(I, H^1_{\bar{J}}(S))^G = 0$
\end{list} 
then $\Def^G_{S/R}$ and $\Hilb_{X/Y}$ are isomorphic deformation functors.
\end{theorem}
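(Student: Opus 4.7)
The plan is to construct a natural transformation $\Phi:\Def^G_{S/R}\to\Hilb_{X/Y}$ via good quotients, show it is smooth with bijective differential, and then invoke Schlessinger's $H_4$-criterion. To define $\Phi$ on $S_A\in\Def^G_{S/R}(A)$, I would form the $G$-invariant quasi-affine subscheme $\Spec S_A\setminus V(\bar J\cdot S_A)$ inside $\Spec R_A\setminus V(J\cdot R_A)$, take the good quotient by $G$, and regard the result as a closed subscheme of $Y\times_k\Spec A$. Linear reductivity of $G$ ensures the construction is functorial and yields a flat $A$-deformation of $X\subset Y$.

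For tangent and obstruction spaces, I would first apply Lemma \ref{sit1} to the Cartesian front face of Figure \ref{subferryfig} (where $\pi$ is flat and affine) to get a $G$-equivariant isomorphism
\begin{equation*}
T^i(\CO_X/f^{-1}\CO_Y;\CS)\;\simeq\;T^i(\CO_{X'}/{f'}^{-1}\CO_{Y'};\CO_{X'}).
\end{equation*}
The Reynolds projection $\CS\twoheadrightarrow\CS^G=\CO_X$ splits $G$-equivariantly, and since taking $G$-invariants is exact this gives
\begin{equation*}
T^i(\CO_X/f^{-1}\CO_Y;\CO_X)\;\simeq\;T^i(\CO_{X'}/{f'}^{-1}\CO_{Y'};\CO_{X'})^G.
\end{equation*}
It therefore suffices to show that the restriction $T^i(S/R;S)^G\to T^i(\CO_{X'}/{f'}^{-1}\CO_{Y'};\CO_{X'})^G$ is bijective for $i=1$ and injective for $i=2$.

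For this I would apply Theorem \ref{supp} with $Z=V(\bar J)\subset\Spec S$, take $G$-invariants of the resulting long exact sequence, and compute the error terms $T^i_{\bar J}(S/R;S)^G$ via the spectral sequence
\begin{equation*}
E_2^{p,q}=T^p(S/R;H^q_{\bar J}(S))^G\;\Longrightarrow\;T^{p+q}_{\bar J}(S/R;S)^G.
\end{equation*}
Three inputs kill all relevant $E_2$ entries for $p+q\in\{1,2\}$: the surjection $R\twoheadrightarrow S$ forces $\Omega_{S/R}=0$ and hence $T^0(S/R;-)=0$; Lemma \ref{klep}(ii) (using hypothesis (i)) gives $H^0_{\bar J}(S)=0$; and the standard identification $T^1(S/R;M)\simeq\Hom_S(I/I^2,M)=\Hom_R(I,M)$ for an $S$-module $M$ converts hypothesis (ii) into $E_2^{1,1}=\Hom_R(I,H^1_{\bar J}(S))^G=0$. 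Thus $T^1_{\bar J}(S/R;S)^G=T^2_{\bar J}(S/R;S)^G=0$, and the restriction map has the required bijectivity and injectivity.

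Consequently $\Phi$ is smooth with bijective differential. Since both $\Hilb_{X/Y}$ and $\Def^G_{S/R}$ (the latter by Rim \cite{ri:eq}) satisfy Schlessinger's $H_4$, the criterion in \cite[2.11, 2.15]{sc:fun} yields that $\Phi$ is an isomorphism of functors. The main obstacle I foresee is the careful setup of $\Phi$, namely verifying that the good-quotient construction produces genuine flat $A$-deformations of $X\subset Y$ and is functorial in both directions, together with the routine but detail-heavy check that the long exact sequence and spectral sequence of Theorem \ref{supp} remain exact after taking $G$-invariants; both should follow from the exactness properties guaranteed by linear reductivity.
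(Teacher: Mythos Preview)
Your proof has a genuine gap in the step where you invoke Lemma~\ref{sit1}. You write the isomorphism
\[
T^i(\CO_X/f^{-1}\CO_Y;\CS)\;\simeq\;T^i(\CO_{X'}/{f'}^{-1}\CO_{Y'};\CO_{X'}),
\]
which requires the square with vertices $X',Y',X,Y$ to be Cartesian. But this is the \emph{back} vertical square of Figure~\ref{subferryfig}, and the paper explicitly warns (immediately after that figure) that ``the front vertical square is Cartesian but the back one needs not be.'' The point is that $\pi:Y'\to Y$ is only a \emph{good} quotient, so $\pi^{-1}(\pi(X'))$ need not equal $X'$; one only knows $\pi^{-1}(\pi(V))=V$ for $G$-invariant $V$ once $\pi$ is geometric, i.e.\ over the open set $U_Y$. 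Your spectral-sequence computation with support in $\bar J$ is fine and does show that $T^i(S/R;S)^G\to T^i(\CO_{X'}/{f'}^{-1}\CO_{Y'};\CO_{X'})^G$ is an isomorphism for $i=1$ and injective for $i=2$, but this does not connect to $T^i(\CO_X/f^{-1}\CO_Y;\CO_X)$ without the missing Cartesian property.

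The paper's fix is to pass through the intermediate functor $\Hilb_{U_X/U_Y}$. One removes $V(I_Z)$ (not just $V(\bar J)$) from $\Spec S$, landing on $U'_X$, where the \emph{front} square is Cartesian and Lemma~\ref{sit1} legitimately gives $T^i(\CO_{U'_X}/{f'}^{-1}\CO_{U'_Y};\CO_{U'_X})^G\simeq T^i(\CO_{U_X}/f^{-1}\CO_{U_Y};\CO_{U_X})$. This is why Lemma~\ref{klep}(ii)(iii) is needed: it transfers your hypotheses from $\bar J$ to $I_Z$, so that the same spectral-sequence argument works with support in $I_Z$. Then a separate, easier comparison (using $\depth_Z\CO_X\ge 2$ from Lemma~\ref{klep}(i)) shows $\Hilb_{X/Y}\simeq\Hilb_{U_X/U_Y}$. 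Your argument can be repaired along exactly these lines: replace $\bar J$ by $I_Z$ in the support computation, apply Lemma~\ref{sit1} to the front face, and add the short argument comparing $\Hilb_{X/Y}$ with $\Hilb_{U_X/U_Y}$.
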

\begin{proof} To see that  $\depth_Z \CS \ge 2$ implies that the assumptions in (i) hold use
  $\mathcal{H}^i_Z(\CS)^G = \mathcal{H}^i_Z(\CO_X)$, the spectral sequence of Section \ref{depthcon} which implies $H^i_Z(X,\CS) = 0$ for $i \le 1$, and the diagram in the
  proof of Lemma~\ref{klep}.

We will prove that both deformation functors are isomorphic
  to $\Hilb_{U_X/U_Y}$. Note first that if $f: X \to Y$ is any closed
  embedding then $T^0(\CO_X/f^{-1}\CO_Y; \mathcal{F}) = 0$ for any
  $\CO_X$-module $\mathcal{F}$. Thus the spectral sequence in Theorem
  \ref{supp} yields $T_{I_Z}^{1} (S/R;S) \simeq \Hom_R(I, H^0_{I_Z}(S)) = 0$
  by Lemma \ref{klep}. Furthermore this and Lemma \ref{klep} (iii) show that
  $T_{I_Z}^{2} (S/R;S) \simeq \Hom_R(I, H^1_{I_Z}(S)) \simeq \Hom_R(I,
  H^1_{\bar{J}}(S))$.

We start with applying the long exact sequence in
  Theorem \ref{supp} and get
\begin{equation*}0 \to T^{1} (S/R;S)
\to T^{1} ({\mathcal O}_{U^\prime_X}/\mathcal{R};{\mathcal O}_{U^\prime_X}) \to
T_{I_Z}^{2} (S/R;S)  \to T^{2} (S/R;S)
\to T^{2} ({\mathcal O}_{U^\prime_X}/\mathcal{R};{\mathcal
  O}_{U^\prime_X}) \end{equation*} 
where we set $i_Y: U^\prime_Y \hookrightarrow \Spec R$ and  $\mathcal{R} = (f^\prime)^{-1} i_Y^{-1} \CO_{\Spec R}$. Now $i_Y$ is an open immersion so $\mathcal{R}  \simeq (f^\prime)^{-1} \CO_{U^\prime_Y}$. By assumption 
$\pi_{\mid U^\prime_Y}$ is smooth, in particular flat so 
 by Lemma \ref{sit1}
$$ T^{i}({\mathcal
  O}_{U^\prime_X}/\mathcal{R};{\mathcal
  O}_{U^\prime_X}) \simeq T^{i}({\mathcal
  O}_{U_X}/f^{-1} \CO_{U_Y};\pi_{\ast}{\mathcal
  O}_{U^\prime_X}) \, .$$
 Therefore after taking invariants and using the conditions we see
 that 
$$T^i(S/R; S)^G \to  T^{i}({\mathcal
  O}_{U_X}/f^{-1} \CO_{U_Y};{\mathcal
  O}_{U_X}) $$
is an isomorphism for $i=1$ and injective for $i=2$. Thus
$\Def_{S/R}^G \simeq \Hilb_{U_X/U_Y}$.

We have $\mathcal{H}^i_Z(\CO_X) = 0$ for $i \le 1$ by Lemma \ref{klep} and
assumption, so again the spectral sequence and vanishing of
$T^0(\CO_X/f^{-1} \CO_Y; \CO_X)$ implies
$T^i_Z(\CO_X/f^{-1} \CO_Y; \CO_X) = 0$ for $i \le 2$. We apply the long exact
sequence in Theorem \ref{supp} again to get
$T^{1} ({\mathcal O}_X/f^{-1}\CO_Y;\CO_X) \simeq T^{1} ({\mathcal
  O}_{U_X}/f^{-1}\CO_{U_Y};\CO_{U_X})$
and
$T^{2} ({\mathcal O}_X/f^{-1}\CO_Y;\CO_X) \to T^{2} ({\mathcal
  O}_{U_X}/f^{-1}\CO_{U_Y};\CO_{U_X})$
injective. Thus $\Hilb_{X/Y} \simeq \Hilb_{U_X/U_Y}$.
\end{proof}

\begin{remark}
(i) From the proof we see that the slightly weaker assumptions
$\depth_Z \CS \ge 1$, $\Hom_R(I, H^1_{I_Z}(S))^G = 0$ and
$\Hom_{\CO_Y}(\mathcal{I}, \mathcal{H}^1_Z(\CO_X)) = 0$ also imply the
result in the theorem.

(ii) If the theorem applies and $T^2(S/R;S)^G = 0$ then $\Hilb_{X/Y}$ is
unobstructed even though $H^1(X,\mathcal{N}_{X/Y})$ or $H^2(X,
\mathcal{T}^2_{X/Y})$ do not vanish. See Example \ref{cex}.
\end{remark}

\begin{corollary} \label{} If $(X, Z, S, \bar{J})$ is a $G$-subquadruple of
  $(Y,W,R,J)$ induced by $I \subseteq R$ and $\depth_{I_Z} S \ge 2$,
   then $\Def^G_{S/R}$ and $\Hilb_{X/Y}$ are isomorphic deformation functors.
\end{corollary}
\begin{proof} Both assumptions in Theorem~\ref{hilbthm} are satisfied because
  $\depth_{I_Z} S \ge 2$ implies $\depth_{\bar J} S \ge 2$, whence
  $ H^1_{\bar{J}}(S)=0$ and $\depth_Z \CS \ge 2$ by remark of subsection
\ref{depthcon}.
\end{proof}
In the case of the Cox construction for toric varieties (Example \ref{TVex})
we get a corollary which is a generalization of the comparison theorem as
stated in \cite{ps:hil}, cf.\ \cite[Theorem 3.6 and Remark 3.7]{kle:def}.
Actually \cite[Remark 3.7]{kle:def} implies the comparison
theorem in \cite{ps:hil}. Proofs and full statements of the results we use
here may be found in \cite[Chapter 5 and 6]{cls:tor}.

Let $Y$ be a toric variety with Cox ring $R$. Every
closed subscheme $X$ of $Y$ corresponds to a homogeneous, with respect
to the $\Cl(Y)$ grading, ideal $I \subseteq R$, \cite[Proposition 6.A.6]{cls:tor}. Moreover there is a sheafification
construction taking any graded $R$-module $M$ to a sheaf
$\widetilde{M}$ on $Y$. In particular if $S = R/I$ then $\widetilde{S}
= \bigoplus_{\alpha \in \Cl(Y)} \CO_X(\alpha)$. Also, as in the case
of projective space, one may compute sheaf cohomology from local
cohomology at the irrelevant ideal (\cite[Proposition
2.3]{ems:lo}). In particular there is an exact sequence
\begin{equation}\label{ems} 0 \to H^0_B(S) \to S \to \bigoplus_{\alpha \in \Cl(Y)} 
H^0(X, \CO_X(\alpha)) \to H^1_B (S) \to 0 \, .\end{equation}
We therefore get the following result for subschemes of toric
varieties.

\begin{corollary} \label{TVcor} Let $X$ be a subscheme of a toric variety $Y$
  corresponding to a homogeneous ideal $I$ in the Cox ring $R$ of $Y$. Set
  $S = R/I$ and let $Z$ be the intersection of the singular locus of $Y$ with
  $X$ and $U= X \setminus Z$. Assume $I$ is generated by homogeneous
  polynomials of degrees $\alpha_1, \dots , \alpha_m \in Cl(Y)$. If
\begin{list}{\textup{(\roman{temp})}}{\usecounter{temp}}
\item $\depth_Z  \CO_X \ge 2$ and  $H^0(X, \CO_X (\alpha)) \simeq H^0(U, {\CO_X}_{\mid U}(\alpha))$ for every $\alpha \in Cl(Y)$ \rm(e.g. $\depth_Z \CO_X(\alpha) \ge 2$ for all $\alpha \in \Cl(Y)$ \rm) ,
\item $S_{\alpha_i} \simeq H^0(X, \CO_X(\alpha_i))$ for all $i = 1,
  \dots m$
\end{list} 
then $\Def^0_{S/R}$ and $\Hilb_{X/Y}$ are isomorphic deformation functors.
\end{corollary}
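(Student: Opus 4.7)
The plan is to realize the corollary as a direct specialization of Theorem \ref{hilbthm} to the $G$-subquadruple coming from the toric Cox construction. First, I would take the ambient $G$-quadruple to be $(Y,\Sing(Y),R,B(\Sigma))$ from Example \ref{TVex}, with $G = \Hom_\ZZ(\Cl(Y),\CC^\ast)$. Because $I$ is $\Cl(Y)$-homogeneous, it is $G$-invariant, and Definition \ref{subferrydef} applied to $I$ yields the $G$-subquadruple $(X, Z, S, \bar J)$ with $\bar J = (I+B(\Sigma))/I$. Since $G$-invariance of a graded module is the same as taking the degree $0$ piece (as noted at the end of Example \ref{TVex}), we have $\Def^G_{S/R} = \Def^0_{S/R}$, so once the two hypotheses of Theorem \ref{hilbthm} are verified, the corollary follows.

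For hypothesis (i) of Theorem \ref{hilbthm}, the ferry of the subquadruple is obtained by restricting the Cox-ring ferry: $\CS = \pi_\ast\CO_{X^\prime} = \bigoplus_{\alpha\in\Cl(Y)}\CO_X(\alpha)$, because $\pi_\ast$ is the graded-sheafification functor $\widetilde{(\cdot)}$ applied to the direct sum $\bigoplus_\alpha S(\alpha)$. So the depth condition (i) of Theorem \ref{hilbthm} reads verbatim as condition (i) of the corollary.

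For hypothesis (ii), note first that $\bar J$ and $B(\Sigma)\cdot S$ have the same radical in $S$, so $H^1_{\bar J}(S) = H^1_B(S)$ as $\Cl(Y)$-graded $S$-modules (writing $B := B(\Sigma)$). Pick a graded presentation $\bigoplus_{i=1}^m R(-\alpha_i)\twoheadrightarrow I$ coming from the generators; applying $\Hom_R(-,H^1_B(S))$ and taking degree $0$ gives an injection
\[
\Hom_R(I,H^1_B(S))_0 \hookrightarrow \bigoplus_{i=1}^m H^1_B(S)_{\alpha_i}.
\]
From the local-cohomology exact sequence cited before the corollary,
\[
0\to H^0_B(S)\to S\to \bigoplus_{\alpha\in\Cl(Y)} H^0(X,\CO_X(\alpha))\to H^1_B(S)\to 0,
\]
the hypothesis $S_{\alpha_i}\simeq H^0(X,\CO_X(\alpha_i))$ forces both $H^0_B(S)_{\alpha_i}=0$ and $H^1_B(S)_{\alpha_i}=0$ for each $i$. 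Hence $\Hom_R(I,H^1_B(S))_0 = \Hom_R(I,H^1_{\bar J}(S))^G = 0$, which is precisely condition (ii) of Theorem \ref{hilbthm}.

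Both hypotheses of Theorem \ref{hilbthm} being established, we conclude $\Def^0_{S/R}\simeq \Def^G_{S/R}\simeq \Hilb_{X/Y}$. The only real subtlety is the bookkeeping between the $G$-action of the quasi-torus, the $\Cl(Y)$-grading, and the identification of the ferry on the subquadruple with $\bigoplus_\alpha\CO_X(\alpha)$; once these translations are set up, the proof is essentially the observation that in the Cox setting the finite generation of $I$ converts the $\Hom$-vanishing condition into finitely many cohomological comparisons $S_{\alpha_i}\simeq H^0(X,\CO_X(\alpha_i))$.
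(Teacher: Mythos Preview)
Your argument is correct and matches the paper's approach exactly: the paper does not give a separate proof of this corollary, but simply sets up the dictionary (the ferry $\CS=\bigoplus_{\alpha}\CO_X(\alpha)$ and the local-cohomology exact sequence for $H^i_B(S)$) in the paragraph preceding the statement and then says ``We therefore get the following result.'' Your verification of hypotheses (i) and (ii) of Theorem~\ref{hilbthm} via the graded presentation of $I$ and the degree-$\alpha_i$ pieces of the exact sequence is precisely the intended translation.
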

\begin{proof} The first statement is just a rewrite of Theorem \ref{hilbthm} (i) using $\mathcal{S}
= \bigoplus_{\alpha \in \Cl(Y)} \CO_X(\alpha)$. What is left is to show that $\Hom_R(I, H^1_{B}(S))_0 = 0$, where we mean degree $0$ in the $\Cl(Y)$ grading. Clearly $\varphi \in \Hom_R(I, H^1_{B}(S))$ is determined by its values on generators of $I$, so if $ H^1_{B}(S)_{\alpha_i} = 0$ for all $i = 1, \dots m$ we are done. But this follows from the statement in (ii) and the exact sequence \eqref{ems}.
\end{proof}

\begin{corollary} \label{wpscor} Let $X = \Proj S$ be a subscheme of a well
  formed weighted projective space $\PP(\mathbf{q}) = \Proj R$ defined by the
  homogeneous ideal $I$. Let $\mathfrak{m}$ be the irrelevant maximal ideal of
  $R$ and $Z$ the intersection of the singular locus of $\PP(\mathbf{q})$ with
  $X$. If
\begin{list}{\textup{(\roman{temp})}}{\usecounter{temp}}
\item $\depth_Z \CO_X \ge 2$ and
  $H^0(X, \CO_X (m)) \simeq H^0(U_X, {\CO_X}_{\mid U_X}(m))$ for every $m \in
  \ZZ$, \rm(e.g. $\depth_Z \CO_X(m) \ge 2$ for all $m \in \ZZ$ \rm) 
\item  $\Hom_R(I, H^1_{\mathfrak{m}}(S))_0 = 0$
\end{list} 
then $\Def^0_{S/R}$ and $\Hilb_{X/Y}$ are isomorphic deformation
functors.
\end{corollary}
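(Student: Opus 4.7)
The plan is to derive the corollary as a specialization of Theorem \ref{hilbthm} to the weighted projective setting supplied by Example \ref{wps}.

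First I would set up the relevant $G$-subquadruple. Take $G=k^{\ast}$ acting on $R=k[x_0,\dots,x_n]$ through the well formed grading; Example \ref{wps} produces the $G$-quadruple $(\PP(\mathbf{q}),\Sing\PP(\mathbf{q}),R,\mathfrak{m})$. Since the ideal $I\subset R$ is homogeneous, it is $G$-invariant, so Definition \ref{subferrydef} yields the induced $G$-subquadruple $(X,Z,S,\bar{\mathfrak{m}})$ with $\bar{\mathfrak{m}}=\mathfrak{m}\cdot S$ the irrelevant maximal ideal of $S$ and $Z=X\cap \Sing\PP(\mathbf{q})$.

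Next I would identify the ferry. The good quotient $\pi:X'\to X$ obtained from $\Spec S\setminus V(\bar{\mathfrak{m}})$ is the standard $\Proj$ construction, and a direct computation of the pushforward in terms of the $k^{\ast}$-isotypical decomposition gives
\[
\CS=\pi_{\ast}\CO_{X'}=\bigoplus_{m\in\ZZ}\CO_X(m).
\]
Hence hypothesis (i) of the corollary coincides with hypothesis (i) of Theorem \ref{hilbthm}.

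For hypothesis (ii), two translations are required. Because $\bar{\mathfrak{m}}$ and the image of $\mathfrak{m}$ in $S$ have the same radical, local cohomology gives $H^{1}_{\bar{\mathfrak{m}}}(S)\simeq H^{1}_{\mathfrak{m}}(S)$ as graded $R$-modules. Moreover, since the $G=k^{\ast}$ action corresponds precisely to the $\ZZ$-grading, taking $G$-invariants of a graded $R$-module is the same as extracting the degree zero component. Consequently
\[
\Hom_R\bigl(I,H^{1}_{\bar{\mathfrak{m}}}(S)\bigr)^{G}=\Hom_R\bigl(I,H^{1}_{\mathfrak{m}}(S)\bigr)_0,
\]
which vanishes by hypothesis (ii). Theorem \ref{hilbthm} then yields $\Def^{0}_{S/R}=\Def^{G}_{S/R}\simeq\Hilb_{X/Y}$, which is the desired conclusion.

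The argument is essentially bookkeeping, and I do not anticipate a real obstacle. The only mild point worth verifying is the auxiliary depth condition $\depth_{\bar{\mathfrak{m}}}S\ge 1$ appearing in Definition \ref{subferrydef}; this is implicit in the Example \ref{wps} set-up used to present $X$ as $\Proj S$ inside $\PP(\mathbf{q})$, and in any case is absorbed into hypothesis (i) via the remark following Lemma \ref{klep}.
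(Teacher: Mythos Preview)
Your proof is correct and follows exactly the intended route: the corollary is an immediate specialization of Theorem \ref{hilbthm} to the $G$-subquadruple furnished by Example \ref{wps}, and the paper gives no separate argument beyond this. The only cosmetic point is that $\bar{\mathfrak{m}}=(I+\mathfrak{m})/I$ \emph{is} the image of $\mathfrak{m}$ in $S$, so the ``same radical'' remark is superfluous; the identification $H^1_{\bar{\mathfrak{m}}}(S)\simeq H^1_{\mathfrak{m}}(S)$ is simply the standard fact that local cohomology of an $R$-module at $\mathfrak{m}$ agrees with local cohomology at its extension to $S$.
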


We give some examples to illustrate
the conditions in Theorem \ref{hilbthm} and Corollary
\ref{TVcor}. Note that in the (multi)-graded case we may think of $\Def^0_{S/R}$ as deformations that preserve
the Hilbert function and that they correspond to 
Hilbert function strata of the Hilbert scheme, see \cite[Theorem
1.1]{kl:sm} and \cite{hs:mu}. 

\begin{example}\emph{Points.} If $X = \Proj S$ is $s$ points in general enough
  position in $\PP^n$, then the Hilbert function of $X$ is 
$$h_X(\nu) = \dim S_\nu = \inf \{s, \binom{\nu + n}{n}\}$$
by e.g\ \cite{gmr:hi}. Let $\nu_0$ be the smallest integer with 
$$s \le \binom{\nu_0 + n}{n}\, .$$
The exact sequence
$$0 \to S_\nu \to H^0(X, \CO_X(\nu)) \to H^1(\mathcal{I}_X(\nu)) \to
0 $$
and the fact that $h^0(\CO_X(\nu)) = s$ yield $I_\nu = 0$ for $\nu <
\nu_0$ and $H^1(\mathcal{I}_X(\nu)) \simeq H^1_\mathfrak{m}(S)_\nu = 0$ for
$\nu \ge \nu_0$ so by Corollary~\ref{wpscor}, 
$\Def^0_{S/R} \simeq \Hilb_{X/\PP^n}$ for points in general enough
  position.

  Six general points in $\PP^2$ will have Hilbert function $(1,3,6,6, \dots)$
  and hence $\Def^0_{S/R} \simeq \Hilb_{X/\PP^n}$. On the other hand the
  complete intersection of a quadric and a cubic will have
  $h_X = (1,3,5,6,6,\dots)$. Thus $\dim I_2 = h^1(\mathcal{I}_X(2)) = 1$, so
  $\Hom(I, H^1_{\mathfrak{m}}(S))_0 \simeq k$ and the functors need not be
  isomorphic, cf. Corollary~\ref{wpscor} (ii). Indeed since both obstruction
  spaces vanish ($S$ is a complete intersection) the map
  $\Def^0_{S/R} \to \Hilb_{X/\PP^n}$ corresponds dually to a surjection of
  formally smooth complete $k$-algebras. The above two cases show that the
  Hilbert function stratum given by $(1,3,6,6, \dots)$ is an open subscheme of
  $\Hilb^6(\PP^2)$ while $(1,3,5,6, \dots)$ gives a smooth codimension 1
  stratum.
\end{example}

\begin{example}\label{cex} \emph{Curves.} 
  Consider a smooth curve $C$ sitting on a hypersurface $V$ of degree
  $s < \sum_{i=0}^3 q_i$ in $\PP(q_0,q_1,q_2,q_3)$. Let
  $C = \Proj S \subset \PP({\bf q}) = \Proj R$ with $S = R/I$ and suppose that
  $\Spec R/I_V \setminus \{0\}$ is smooth, i.e. $V$ quasi-smooth. Applying
  $\mathcal{H}om(-,\CO_C)$ to the sequence 
  $0 \to \CI_{V/ \PP({\bf q})} \to \CI_{C/ \PP({\bf q})} \to \CI_{C/V} \to 0$ we get an
  exact sequence of normal bundles 
$$0 \to \CN_{C/V} \to \CN_{C/ \PP({\bf q})} \to \CO_C(s) \to 0$$
(cf. \cite[p. 93]{ha:def}) and
$\CN_{C/V} \simeq (\omega_C^{-1} \otimes \omega_V)^{-1}$. Thus by Serre
duality $H^1(\CN_{C/V} ) = 0$ and
$H^1(\CN_{C/ \PP({\bf q})}) \simeq H^1(\CO_C(s))$ while
$H^0(\CN_{C/ \PP({\bf q})}) \to H^0(\CO_C(s))$ is surjective.

Applying this to the long exact sequence in Theorem \ref{supp} we get
\begin{equation}  \label{grdefinhilb}
0 \to \Hom_R(I,S)_0 \to H^0(\CN_{C/ \PP({\bf q})}) \to \Hom_R(I,
H^1_\mathfrak{m}(S))_0 \to T^2(S/R;S)_0 \to 0 \, ,
\end{equation} because $
T^2_\mathfrak{m}(S/R;S)_0 \simeq \Hom_R(I, H^1_\mathfrak{m}(S))_0$ and the
composition 
$$H^1(\CN_{C/ \PP({\bf q})}) \to T^3_\mathfrak{m}(S/R;S)_0 \to \Hom_R(I,
H^2_\mathfrak{m}(S))_0 \to H^1(\CO_C(s))$$ is injective. Similarly there is a
commutative diagram
\begin{equation*}
\begin{tikzcd} H^0(\CN_{C/ \PP({\bf q})}) \arrow[twoheadrightarrow]{d}
\arrow{r} &  \Hom_R(I,H^1_\mathfrak{m}(S))
\arrow{d}\\H^0(\CO_C(s)) \arrow[twoheadrightarrow]{r}&
H^1(\CI_C(s))
\end{tikzcd}
\end{equation*}
so if $\Hom_R(I,H^1_\mathfrak{m}(S))_0 \simeq H^1(\CI_C(s))$ then
$T^2(S/R; S)_0=0$ and $\Def^0_{S/R}$ is unobstructed. If
$d_0, d_1, \dots ,d_m$ are the degrees of the minimal generators of $I$ and
$s=d_0$, then it is easy to see that there is such an isomorphism if
$H^1(\CI_C(d_i)) = 0$ for all $i = 1, \dots , m$.

If $ \PP({\bf q})= \PP^3$ and $s=2$ then $V \simeq \PP^1 \times \PP^1$ and we
can use the K\"{u}nneth formula to compute these cohomology groups. The
outcome is that the above vanishing holds and we get
$\Hom_R(I,H^1_\mathfrak{m}(S))_0 \simeq H^1(\CI_C(2))$, so $T^2(S/R;S)_0= 0$.
Moreover if $C$ has bidegree $(p,q)$ with $2 \le p \le q$ then
$$h^1(\CI_C(2)) = \begin{cases} 0 &\text{if } p > 2\\ 
\max \{0,q-3\}  & \text{if } p = 2\end{cases} \, .$$
Thus $\Def^0_{S/R} \simeq \Hilb_{C/\PP^3}$ are smooth if $p > 2$ or $q=3$ while
$\Def^0_{S/R}$ corresponds to a smooth stratum of codimension $q-3$ in
$\Hilb^{d,g}(\PP^3)$ otherwise, see \cite{ta:ca}.

If $s=3$ then among curves on a cubic surface in $\PP^3$ we find the curve
that gives rise to Mumford's example of an irreducible component $W$ of
$\Hilb^{14,24}(\PP^3)$ which is not reduced at its generic point
(\cite{mu:fu}). Let $H$ be a hyperplane section of $V$ and $E$ a line on $V$.
Take $C$ to be a generic element of the complete linear system $|4H+2E|$ on
$V$. Such a curve may be constructed as the linked curve to the curve
consisting of two disjoint conics in a general $(3,6)$ complete intersection,
and there is a resolution
$$0 \to R(-9) \to R(-8)^2 \oplus R(-7)^2 \to R(-6)^3 \oplus R(-3) \to
I \to 0 \, ,$$
cf.\ \cite{cu:mum}. One computes that $h^1(\CI_C(d))= 0$ when $d \ge 6$ and
$h^1(\CI_C(3)) = h^1(\CO_C(3)) = 1$, so
$\Hom_R(I,H^1_\mathfrak{m}(S)) \simeq H^1(\CI_C(3)) \simeq k$ and
$T^2(S/R;S)_0= 0$ in \eqref{grdefinhilb}. Thus $\Def^0_{S/R}$ is represented
by a formally smooth complete $k$-algebra which by \eqref{grdefinhilb}
corresponds to the reduced subscheme of the component $W$ at its generic
point.
\end{example}

\section{Deformations of the scheme - $\Def_X$} \label{Defsec}
In this section we must assume that the characteristic of the ground field
is $0$. Fix for the whole of this section  a $G$-quadruple $(X,Z,S,J)$ 
with associated sheaf of algebras $\CS$. Moreover fix $M$, a finitely generated  $SG$-module, and set $\CF =
\pi_\ast (\widetilde{M}_{\mid X^\prime})$ to be the corresponding
sheaf of  $\CS G$-module on $X$. This section is mostly
concerned with computing cotangent groups with values in $\CS$ or
$\CO_X$, but because of future applications  it is
better to be a little more general and consider values in $M$, $\CF$
and $\CF^G$.

\subsection{Conditions for $\Def^G_{S} \to \Def_{X}$ to be smooth
or an isomorphism}
\begin{lemma}\label{defGlemma} If $T^2(\CS/ \CO_X;\CS)^G = 0$
and $T^2_J(S/k;S)^G = 0$ then $\Def^G_{S} \to \Def_{X}$ is smooth. If moreover
$T^1(\CS/ \CO_X;\CS)^G= 0$ and $T^1_J(S/k;S)^G = 0$ then $\Def^G_{S}
\to \Def_{X}$  is an isomorphism.  
\end{lemma}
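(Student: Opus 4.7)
The plan is to compare the tangent-obstruction spaces of $\Def^G_S$ and $\Def_X$ by threading the invariants $T^\bullet(\CS/k;\CS)^G$ between them, combining the Zariski-Jacobi sequence of the tower $k\to\CO_X\to\CS$ with the support sequence of Theorem \ref{supp} for $V(J)\subset\Spec S$. Linear reductivity of $G$ in characteristic $0$ keeps the $G$-invariants functor exact, so both long exact sequences survive after taking invariants.

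First, the Zariski-Jacobi sequence for $k\to\CO_X\to\CS$ with coefficients in $\CS$, after taking invariants, reads
\begin{equation*}
\cdots \to T^i(\CS/\CO_X;\CS)^G \to T^i(\CS/k;\CS)^G \to T^i(\CO_X/k;\CS)^G \to T^{i+1}(\CS/\CO_X;\CS)^G \to \cdots .
\end{equation*}
Since $G$ acts trivially on $\CO_X$, the cotangent complex $\mathbb{L}^{\CO_X/k}_{\bullet}$ carries the trivial $G$-action, so a $G$-equivariant hom into $\CS$ factors through $\CS^G=\CO_X$, giving $T^i(\CO_X/k;\CS)^G \simeq T^i(\CO_X/k;\CO_X)=T^i_X$. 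Next, because $\pi$ is affine and $\CS=\pi_\ast\CO_{X'}$, the local-global spectral sequence argument of Lemma \ref{sit1} applied to the absolute cotangent complex produces a $G$-equivariant isomorphism $T^i(\CS/k;\CS)\simeq T^i(\CO_{X'}/k;\CO_{X'})$. Combined with the invariant version of the support sequence of Theorem \ref{supp} for $V(J)\subset\Spec S$, this yields
\begin{equation*}
\cdots \to T^i_J(S/k;S)^G \to T^i(S/k;S)^G \to T^i(\CS/k;\CS)^G \to T^{i+1}_J(S/k;S)^G \to \cdots .
\end{equation*}

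Splicing these two invariant long exact sequences, the vanishings $T^2(\CS/\CO_X;\CS)^G=0$ and $T^2_J(S/k;S)^G=0$ produce a surjection $T^1(S/k;S)^G \twoheadrightarrow T^1_X$ and an injection $T^2(S/k;S)^G \hookrightarrow T^2_X$, so $\Def^G_S \to \Def_X$ is smooth by the standard criterion. Adding the vanishings $T^1(\CS/\CO_X;\CS)^G=0$ and $T^1_J(S/k;S)^G=0$ upgrades the $T^1$ map to an isomorphism and extends the diagram chase one step further, giving a surjection $\Der_k(S,S)^G = T^0(S/k;S)^G \twoheadrightarrow T^0_X$ on infinitesimal automorphisms. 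The main technical obstacle is the final implication, since $\Def_X$ does not in general satisfy Schlessinger's $\text{H}_4$: one must promote smoothness together with bijectivity on $T^1$ and surjectivity on $T^0$ to an isomorphism of functors by a small-extension induction, in which the $T^0$-surjection guarantees that the infinitesimal automorphism groups acting on the fibres of $F(B)\to F(A)\times_{G(A)}G(B)$ match, forcing bijectivity level by level.
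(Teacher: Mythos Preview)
Your proof is correct and follows essentially the same route as the paper: both splice the Zariski--Jacobi sequence for $k\to\CO_X\to\CS$ (the paper phrases it on $X'$ as $k\to\pi^{-1}\CO_X\to\CO_{X'}$, which is equivalent via $\pi$ affine) with the support sequence of Theorem~\ref{supp}, take $G$-invariants, and then handle the isomorphism statement by a small-extension induction using the surjection on $T^0$. One small caveat: your appeal to Lemma~\ref{sit1} for the identification $T^i(\CS/k;\CS)\simeq T^i_{X'}$ is a stretch, since that lemma is stated for a Cartesian square with a closed immersion; the paper instead cites Andr\'e's Proposition~56 together with affineness of $\pi$, and you should do likewise (or spell out directly that $\CS(U)=\CO_{X'}(\pi^{-1}U)$ makes the two local-global spectral sequences coincide).
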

\begin{proof} First note that since taking $G$ invariants is exact we
have $T^i_X(\CS)^G \simeq T^i_X$. Secondly
\cite[Appendice, Proposition 56]{an:hom} and the fact that $\pi$ is
affine imply $T^i_X(\CS) \simeq T^i(\pi^{-1}
\CO_X/k;\CO_{X^\prime})$. The Zariski-Jacobi sequence for $k \to
\pi^{-1} \CO_X \to \CO_{X^\prime}$ reads
$$ \to T^i(\CO_{X^\prime}/\pi^{-1} \CO_X;\CO_{X^\prime})  \to
T^i_{X^\prime} \to T^i(\pi^{-1} \CO_X/k;\CO_{X^\prime}) \to
T^{i+1}(\CO_{X^\prime}/\pi^{-1} \CO_X;\CO_{X^\prime}) \to $$ so the
first condition yields $(T^i_{X^\prime})^G \to T^i_X$ surjective for
$i=1$ and injective for $i=2$.  The exact sequence in Theorem
\ref{supp} for $X^\prime \subseteq \Spec S$ is
$$ \dots \to T^i_J(S/k;S)  \to
T^i_S \to T^i_{X^\prime} \to T^{i+1}_J(S/k;S) \to \cdots$$ so the
second condition implies the same for $(T^i_S)^G \to
(T^i_{X^\prime})^G$ and we have proven the first statement. 

 To prove the second statement we need to show that the conditions
 imply $\Def^G_{S}(A)
\to \Def_{X}(A)$  injective for all objects $A$ of $\mathbf{C}$.  More
explicitly we must show that if $S^1_A$ and $S^2_A$ are invariant
deformations over $A$ mapping to $X_A^1$ and $X_A^2$, then for every
isomorphism of deformations $\varphi_{X,A}: X_A^1 \to X_A^2$ there
exists an equivariant isomorphism $\varphi_{S,A}: S_A^1 \to S_A^2$
inducing $\varphi_{X,A}$.

If
$T^1(\CS/ \CO_X;\CS)^G= 0$ and $T^1_J(S/k;S)^G = 0$ then the same
argument as above shows that  $(T^i_S)^G \to
T^i_X$  is surjective for
$i=0$ and injective for $i=1$. We will prove the statement by
induction on the length of $A$. Let $\rho : B \to A$ be a small
extension and assume the statement is true for $A$. Let $S^1_B$
and $S^2_B$ be invariant 
deformations of $S$ over $B$ mapping to $X_B^1$ and $X_B^2$. Assume
$\varphi_{X,B}: X_B^1 \to X_B^2$ is an isomorphism of
deformations. Let $\varphi_{X,A}: X_A^1 \to X_A^2$ be the induced
isomorphism where $X_A^i = X_B^i \times_{\Spec B} \Spec A$. By
induction there exists an isomorphism $\varphi_{S,A}: S_A^1 \to S_A^2$
inducing $\varphi_{X,A}$ and where $S_A^i = S_B^i
\otimes_{B} A$.  

The  obstruction to lifting $\varphi_{S,A}$ to an isomorphism
$S_B^1 \to S_B^2$ is in $(T^1_S)^G \otimes \ker \rho$
and maps to $0$ in $T^1_X \otimes \ker \rho$ because a lifting of
$\varphi_{X, A}$ exists (namely  $\varphi_{X, B}$). Thus by
injectivity at the $T^1$ level the obstruction vanishes and there
exists a lifting $\varphi_{S,B}^\prime:  S_B^1 \to S_B^2$. It maps to
an isomorphism 
$\varphi_{X,B}^\prime$ which may differ from our given
$\varphi_{X,B}$. However $\varphi_{X,B} - \varphi_{X,B}^\prime$
defines an element of $T^0_X \otimes \ker \rho$ and the surjectivity
at the $T^0$ level yields an element $D_{S,B} \in  (T^0_S)^G \otimes
\ker \rho$ mapping to $\varphi_{X,B} -
\varphi_{X,B}^\prime$. Thus $\varphi_{S,B}:= \varphi_{S,B}^\prime +
D_{S,B}$ is an isomorphism inducing $\varphi_{X,B}$.
\end{proof}

We will later need a statement with more
general values. The proof is the same as the first part above.
\begin{lemma}\label{Mlemma} If $T^2(\CS/ \CO_X;\CF)^G = 0$
and $T^2_J(S/k;M)^G = 0$ then there is a surjective morphism
$T^1_S(M)^G \twoheadrightarrow T^1_X(\CF^G)$.
\end{lemma}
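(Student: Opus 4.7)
The plan is to mimic the first half of the proof of Lemma \ref{defGlemma} verbatim, replacing the coefficient pair $(\CO_X, S)$ by $(\CF, M)$ throughout; only the two $T^2$-vanishings enter that half, and these match exactly the hypotheses we are given, so no further hypothesis should be needed.

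To set things up, I would first use that $G$ is linearly reductive, so taking $G$-invariants is exact, in order to identify $T^i_X(\CF)^G$ with $T^i_X(\CF^G)$; and then use that $\pi$ is affine, invoking \cite[Appendice. Proposition 56]{an:hom}, to rewrite $T^i_X(\CF)$ as $T^i(\pi^{-1}\CO_X/k;\widetilde{M}_{\mid X^\prime})$ and $T^i(\CS/\CO_X;\CF)$ as $T^i(\CO_{X^\prime}/\pi^{-1}\CO_X;\widetilde{M}_{\mid X^\prime})$. Feeding the first hypothesis into the $G$-invariants of the Zariski--Jacobi long exact sequence for the tower $k \to \pi^{-1}\CO_X \to \CO_{X^\prime}$ with coefficients $\widetilde{M}_{\mid X^\prime}$ then produces a surjection $T^1_{X^\prime}(\widetilde{M}_{\mid X^\prime})^G \twoheadrightarrow T^1_X(\CF^G)$, exactly as in the proof of Lemma \ref{defGlemma}.

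Finally, I would apply the long exact sequence of Theorem \ref{supp} to the open immersion $X^\prime = \Spec S \setminus V(J) \hookrightarrow \Spec S$ with coefficients $M$, and take $G$-invariants; the hypothesis $T^2_J(S/k;M)^G = 0$ then yields a second surjection $T^1_S(M)^G \twoheadrightarrow T^1_{X^\prime}(\widetilde{M}_{\mid X^\prime})^G$. Composing the two surjections produces the desired map. The argument is essentially a module-valued rewrite of the first half of Lemma \ref{defGlemma}; the only thing that truly needs checking is that the comparison isomorphisms used in step one are natural enough in the module argument so that the two surjections compose as $G$-equivariant maps, and I do not expect any real obstacle there.
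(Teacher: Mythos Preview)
Your proposal is correct and matches the paper's approach exactly: the paper simply says ``The proof is the same as the first part above,'' referring to Lemma \ref{defGlemma}, and you have faithfully reproduced that first part with $(\CS,S)$ replaced by $(\CF,M)$. Your added remark about naturality of the comparison isomorphisms in the module variable is the only thing the paper leaves implicit, and it causes no trouble.
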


The modules $T^i_J(S/k;M)$ are easily described in terms of local cohomology.
\begin{lemma}\label{lclemma} If $\depth_J M \ge 1$ then there is an isomorphism 
$T^1_J(S/k;M) \simeq \Der_k(S, H^1_J(M))$ and an exact
sequence
$$0 \to T^1(S/k; H^1_J(M)) \to T^2_J(S/k;M) \to T^0(S/k; H^2_J(M)) \to
T^2(S/k; H^1_J(M)) \, .$$ In particular if $\depth_J M \ge 2$ or $S$
is regular then
$T^2_J(S/k;M) \simeq \Der_k(S, H^2_J(M))$ as $SG$-modules.
\end{lemma}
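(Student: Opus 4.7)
The plan is to read off everything from the second spectral sequence of Theorem \ref{supp}. Applied with $R=k$ and $\mathcal F$ coming from $M$, and restricted to the affine setting $\Spec S$ with support $V(J)$, it takes the form
\[
E_2^{p,q} = T^p\bigl(S/k;\, H^q_J(M)\bigr) \;\Longrightarrow\; T^{p+q}_J(S/k;M).
\]
The depth hypothesis $\depth_J M \ge 1$ says exactly that $H^0_J(M)=0$, so the entire $q=0$ row of $E_2$ vanishes. The edge maps then become simple to read.

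First I would treat $T^1_J$. With the $q=0$ row gone, the only contribution to total degree $1$ is $E_\infty^{0,1}$; since the differentials $d_r$ entering and leaving $E_r^{0,1}$ land in the vanishing row $q=0$ (or in negative bidegree), we have $E_\infty^{0,1}=E_2^{0,1}=T^0(S/k;H^1_J(M))$. Identifying $T^0(S/k;-)$ with $\Der_k(S,-)$ gives the asserted isomorphism $T^1_J(S/k;M)\simeq \Der_k(S,H^1_J(M))$, and the $G$-equivariance is automatic from the functoriality of the whole construction.

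Next I would handle $T^2_J$. The filtration has successive quotients $E_\infty^{0,2}$, $E_\infty^{1,1}$, $E_\infty^{2,0}=0$. As in the previous paragraph, $E_r^{1,1}$ has no nontrivial differentials for $r\ge 2$ (the targets and sources lie in the vanishing row or outside the first quadrant), so $E_\infty^{1,1}=E_2^{1,1}=T^1(S/k;H^1_J(M))$. For $E_\infty^{0,2}$, the only potentially nontrivial differential is $d_2\colon E_2^{0,2}\to E_2^{2,1}=T^2(S/k;H^1_J(M))$, so $E_\infty^{0,2}=\ker d_2$. Assembling the filtration into a short exact sequence $0\to E_\infty^{1,1}\to T^2_J\to E_\infty^{0,2}\to 0$ and splicing in the inclusion $\ker d_2 \hookrightarrow E_2^{0,2}\xrightarrow{d_2} E_2^{2,1}$ yields exactly
\[
0\to T^1(S/k;H^1_J(M))\to T^2_J(S/k;M)\to T^0(S/k;H^2_J(M))\to T^2(S/k;H^1_J(M)).
\]

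Finally, the two special cases are immediate. If $\depth_J M\ge 2$ then $H^1_J(M)=0$, so both flanking terms in the four-term sequence vanish and the middle map becomes an isomorphism. If $S$ is regular (and essentially of finite type over the perfect field $k$, as is the case throughout Section \ref{Defsec}), then $S$ is smooth over $k$, the cotangent complex is just $\Omega^1_{S/k}$ in degree $0$, and $T^i(S/k;-)=0$ for all $i\ge 1$; again the outer terms vanish and one gets $T^2_J(S/k;M)\simeq T^0(S/k;H^2_J(M))=\Der_k(S,H^2_J(M))$. The only mildly delicate point is verifying that the relevant $d_r$'s for $r\ge 3$ vanish at bidegrees $(0,1)$, $(1,1)$ and $(0,2)$, but this is automatic because their targets live in the zero row $q=0$ or outside the first quadrant, so no serious obstacle arises.
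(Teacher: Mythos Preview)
Your proof is correct and follows exactly the paper's approach: the authors simply invoke the edge exact sequence of the spectral sequence in Theorem~\ref{supp} together with $H^0_J(M)=0$, and you have spelled out that edge sequence in detail. Your handling of the two special cases is also the intended one.
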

\begin{proof} Since $H^0_J(M) = 0$ the exact sequence comes from the
edge exact sequence for the spectral sequence in Theorem
\ref{supp}. \end{proof}

\subsection{Euler derivations}

We show that if $\depth_Z\CS \ge 2$ then the relative tangent sheaf
$\Theta_{X'/X}$ is globally free of rank equal to the dimension of $G$. This
will allow us to give finer criteria for the equivalence of deformation
functors. The result follows from general considerations and we start with
some lemmas.

Let $G$ be a reductive group acting on an affine scheme $X = \Spec A$
over a field $k$ of characteristic 0. Let $\mathfrak{g} = T_e$ be the Lie
algebra of $G$ (where $e$ is the identity element in $G$). Consider
the corresponding representation of 
Lie algebras $\phi: A \otimes \mathfrak{g} \to
\Der_{A^G}(A,A)$. Recall that if $\xi \in
\mathfrak{g} = \Der_k(k[G], k(e))$ then $\phi(1 \otimes \xi)$   is the
composition
\begin{equation} A \xrightarrow{\mu} A \otimes k[G]  \xrightarrow{1
    \otimes \xi} A \otimes k(e) \simeq A
\end{equation}
where $\mu$ is the comultiplication of the group action. Note that if
$f$ is invariant then $\mu(f) = f \otimes 1$ so the composition is in
$\Der_{A^G}(A,A)$. 

If $D \in \Der_{A^G}(A,A)$ and $x$ is a closed point of $\Spec A$, let
$D_x$ be the value in 
$\Der_{A^G}(A,k(x))$.  For a fixed closed point $x$ let $p_x : G \to
Gx$ be the orbit map. Then $\phi(1 \otimes \xi)_x(f) = \xi(p_x^{\#}(f))$ so $\phi(\xi)_x$
equals the image of $\xi$ via the map of
Zariski tangent spaces $d p_x :T_e \to T_{Gx,x}$. Note that $T_{Gx,x}
= \Der_k(\CO_{Gx}, k(x)) \subseteq \Der_{A^G}(A, k(x)) \subseteq  \Der_{k}(A, k(x))= T_{X,x}$.

\begin{lemma}\label{inj}  If there is an open dense
  subset of $\Spec A$ where all isotropy groups are finite, then
  $\phi: A \otimes_k \mathfrak{g} \to 
\Der_{A^G}(A,A)$ is injective.
\end{lemma}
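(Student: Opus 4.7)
The plan is to combine the pointwise formula just derived in the excerpt, $\phi(1\otimes \xi)_x = dp_x(\xi)$, with an injectivity property of $dp_x$ at points with finite stabilizer. In characteristic zero any finite group scheme is étale, so if $G_x \subseteq G$ denotes the isotropy subgroup scheme at a closed point $x$ with $G_x$ finite, then $T_e G_x = 0$. Since the kernel of the differential $dp_x \colon \mathfrak{g} \to T_{X,x}$ of the orbit map is precisely $T_eG_x$, we conclude that $dp_x$ is injective for every closed point $x$ lying in the open dense subset $U \subseteq \Spec A$ where isotropies are finite.

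Fixing a $k$-basis $\xi_1, \dots, \xi_n$ of $\mathfrak{g}$, it follows that for any such $x$ the derivations $\phi(1\otimes \xi_1)_x, \dots, \phi(1\otimes \xi_n)_x$ are linearly independent in $T_{X,x}$. Next, given $D = \sum_i a_i \otimes \xi_i \in A\otimes_k \mathfrak{g}$ with $\phi(D) = 0$, I would use that $\phi(D) = \sum_i a_i \cdot \phi(1\otimes \xi_i)$ is a derivation $A \to A$ and evaluate at a closed point $x \in U$ to obtain
\begin{equation*}
\sum_i a_i(x)\, \phi(1\otimes \xi_i)_x \;=\; 0 \quad \text{in } T_{X,x},
\end{equation*}
so that linear independence forces $a_i(x) = 0$ for every $i$ and every such $x$.

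To conclude, I would use that $A$ is a finitely generated $k$-algebra and hence Jacobson, so closed points of $U$ are Zariski dense in $\Spec A$. Each $V(a_i)$ therefore contains a dense subset of $\Spec A$, forcing $V(a_i) = \Spec A$ and hence $a_i$ to be nilpotent; under the reducedness convention that applies in the $G$-quadruple setting of the paper (where $S$ is reduced), this gives $a_i = 0$ and proves injectivity of $\phi$.

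The main obstacle is the injectivity of $dp_x$ for $x$ with finite stabilizer, which genuinely needs characteristic zero: without it the isotropy scheme $G_x$ may be nonreduced and carry a positive-dimensional tangent space even when its underlying reduced scheme is a single point, and the kernel-of-$dp_x$ calculation breaks down. Once this clean identification $\ker(dp_x) = T_eG_x$ is in hand, the rest is a routine pointwise evaluation combined with the density of closed points in a Jacobson scheme.
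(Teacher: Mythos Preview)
Your proof is correct and follows essentially the same line as the paper's: fix a basis of $\mathfrak{g}$, use the pointwise identification $\phi(1\otimes\xi)_x = dp_x(\xi)$, exploit that $dp_x$ is injective on the open dense locus of finite isotropy, and conclude that the coefficients $a_i$ vanish. You are more explicit than the paper about why characteristic zero is needed (finite group schemes are \'etale, so $\ker dp_x = T_eG_x = 0$) and about the reducedness/Jacobson step in passing from vanishing on closed points of a dense open to vanishing in $A$, but the argument is the same.
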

\begin{proof} Choose a basis $\xi_1, \dots ,\xi_r$ for $\mathfrak{g}$
  and assume $\phi(\sum a_i \otimes \xi_i) = 0$. Then $\sum a_i(x)
  dp_x(\xi_i) = 0$ in $T_{Gx,x}$ for all $x$. But by assumption $dp_x$
  is an isomorphism on an open dense subset $U$. Thus for all $i = 1,
  \dots , r$ we have $a_i = 0$ on $U$ so $a_i = 0$ in $A$.
\end{proof}

\begin{lemma} \label{isolemma} If the quotient map $\Spec A \to \Spec A^G$ is smooth of
  relative dimension $\dim G$ and a geometric quotient then
  $\phi: A \otimes_k \mathfrak{g} \to 
\Der_{A^G}(A,A)$ is an isomorphism.
\end{lemma}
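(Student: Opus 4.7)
The plan is to verify the claim by comparing ranks and checking the map is an isomorphism fiber by fiber. First I would show that both sides of $\phi$ are locally free $A$-modules of the same rank $\dim G$: the source $A \otimes_k \mathfrak{g}$ is visibly free of rank $\dim G$, and smoothness of $\pi : \Spec A \to \Spec A^G$ of relative dimension $\dim G$ makes $\Omega_{A/A^G}$, and hence its $A$-linear dual $\Der_{A^G}(A,A)$, locally free of rank $\dim G$. It therefore suffices to prove that $\phi$ is surjective: once this is in hand, the target being projective makes the surjection split, so $\ker \phi$ is locally free of rank $0$, i.e., zero.

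To check surjectivity I would invoke Nakayama together with the fact that $A$ is Jacobson (being a finitely generated algebra over the algebraically closed field $k$) to reduce to showing that
\[
\phi \otimes_A k(x) : k(x) \otimes_k \mathfrak{g} \longrightarrow \Der_{A^G}(A, k(x))
\]
is surjective for every closed point $x \in \Spec A$. Using the discussion preceding Lemma \ref{inj}, I would identify this map with the differential $dp_x : \mathfrak{g} \to T_{Gx, x}$ of the orbit map $p_x : G \to Gx$, composed with the inclusion $T_{Gx, x} \hookrightarrow T_{F_x, x}$; here $F_x := \pi^{-1}(\pi(x))$ is the scheme-theoretic fiber, and the target $\Der_{A^G}(A, k(x))$ is precisely $T_{F_x, x}$.

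Finally I would exploit the two hypotheses on $\pi$ to conclude. Because $\pi$ is a geometric quotient, the underlying set of $F_x$ is the orbit $Gx$; because $\pi$ is smooth of relative dimension $\dim G$ over a characteristic-$0$ field, $F_x$ is reduced and smooth of dimension $\dim G$ at $x$. Hence $F_x = Gx$ with the reduced structure and $\dim T_{F_x, x} = \dim G$, which forces $\dim G_x = 0$, so the isotropy is finite. In characteristic $0$ this means $p_x$ is étale at $e$, whence $dp_x$ is an isomorphism onto $T_{Gx, x} = T_{F_x, x}$ and $\phi \otimes_A k(x)$ is surjective.

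I expect the main subtle point to be the identification of $\phi \otimes k(x)$ with $dp_x$ and of $\Der_{A^G}(A, k(x))$ with $T_{F_x, x}$; once those identifications are made, everything else reduces to the combination of the geometric quotient and smoothness hypotheses with the standard fact that in characteristic $0$ an orbit map with finite isotropy is étale at the identity.
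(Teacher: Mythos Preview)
Your argument is correct and follows essentially the same route as the paper: reduce surjectivity to closed points via Nakayama, identify the fiber map with $dp_x$ landing in the tangent space of the scheme-theoretic fiber, and use the geometric-quotient and smoothness hypotheses to conclude that this fiber is the orbit and that $dp_x$ is an isomorphism. The one structural difference is in how injectivity is handled: the paper invokes Lemma~\ref{inj} directly, whereas you observe that both source and target are locally free of rank $\dim G$, so surjectivity alone forces the kernel to vanish. Your variant is a small but clean simplification, since it bypasses the need to separately verify the finite-isotropy hypothesis of Lemma~\ref{inj} (which the paper leaves implicit); conversely, the paper's appeal to Lemma~\ref{inj} makes the argument work even before one has checked that $\Der_{A^G}(A,A)$ is locally free.
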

\begin{proof}  Injectivity follows from Lemma \ref{inj}. We will show
  surjectivity for the maps of stalks $A_x \otimes \mathfrak{g} \to
  \Der_{A^G}(A,A)_x = \Der_{A_y^G}(A_x,A_x)$ where $x \mapsto y$ via
  the quotient map. By assumption $\CO_{Gx}
  \simeq A \otimes_{A^G} k(y)$ and so $\Der_{A_y^G}(A_x,k(x)) \simeq
  \Der_{k}(\CO_{Gx},k(x)) = T_{Gx, x} \simeq \mathfrak{g}$. On the
  other hand since $A^G \to A$ is smooth we have
  $\Der_{A_y^G}(A_x,k(x)) \simeq \Der_{A_y^G}(A_x,A_x) \otimes_{A_x}
  k(x)$ and so the surjectivity follows from Nakayama's Lemma.
\end{proof}

We write $M^\vee = \Hom_A(M,A)$ for the dual module. The above
construction yields also a map $\psi : \Omega_{A/A^G} \to
\Hom_k(\mathfrak{g}, A)$ namely the composition
$$\Omega_{A/A^G} \to \Omega_{A/A^G}^{\vee \vee}=
\Hom_A(\Der_{A^G}(A,A), A) \xrightarrow{\phi^\vee}
\Hom_k(\mathfrak{g}, A) \, .$$
Thus $\psi(da)(\xi) = \phi(1 \otimes \xi)(a)$

\begin{lemma} \label{omsurj} If $x$ is a closed point in $\Spec A$ mapping to $y \in
  \Spec A^G$ with finite
  isotropy group and $Gx$ is closed in $\Spec A$ then $\psi_x : \Omega_{A_x/A_y^G} \to
\Hom_k(\mathfrak{g}, A_x)$ is surjective.
\end{lemma}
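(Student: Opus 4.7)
The plan is to reduce the statement to a question at the residue field of $x$ via Nakayama's lemma, and then prove surjectivity by linear duality, ultimately identifying the $k$-linear dual of $\overline{\psi_x}$ with the orbit-differential map $dp_x : \mathfrak{g} \to T_{Gx, x}$ and invoking the characteristic zero hypothesis.

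First I would observe that both $\Omega_{A_x/A_y^G}$ and $\Hom_k(\mathfrak{g}, A_x)$ are finitely generated $A_x$-modules (the latter free of rank $\dim_k \mathfrak{g}$; the former because $A$ is a finitely generated $k$-algebra, hence a finitely generated $A^G$-algebra). By Nakayama's lemma, it suffices to prove that the induced $k$-linear map on residue fields
$$\overline{\psi_x} : \Omega_{A_x/A_y^G} \otimes_{A_x} k(x) \longrightarrow \Hom_k(\mathfrak{g}, k(x))$$
is surjective. Since $k$ is algebraically closed and both sides are finite-dimensional $k$-vector spaces, surjectivity of $\overline{\psi_x}$ is equivalent to injectivity of its $k$-linear dual.

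Next I would identify the dual map geometrically. The space $\Omega_{A_x/A_y^G} \otimes_{A_x} k(x)$ is the cotangent space at $x$ of the scheme-theoretic fiber $F = \Spec(A \otimes_{A^G} k(y))$, whose $k$-dual is $T_{F, x} = \Der_{A_y^G}(A_x, k(x))$; the dual of $\Hom_k(\mathfrak{g}, k(x))$ is $\mathfrak{g}$. Unwinding the definition $\psi(da)(\xi) = \phi(1 \otimes \xi)(a)$, the dual of $\overline{\psi_x}$ is the map $\mathfrak{g} \to T_{F, x}$ sending $\xi$ to $\phi(1 \otimes \xi)_x$. As explained in the discussion preceding Lemma \ref{inj}, this element equals $dp_x(\xi)$ viewed in $T_{F,x}$ via the inclusion $T_{Gx, x} \hookrightarrow T_{F, x}$, which is available because $Gx$ is closed in $\Spec A$ and hence contained in the fiber $F$.

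Finally I would invoke the hypotheses. Since $Gx$ is closed and $G_x$ is finite, the reduced orbit $Gx$ is isomorphic to the homogeneous space $G/G_x$, which is smooth of dimension $\dim G$. In characteristic $0$ the quotient map $G \to G/G_x$ is étale, so $dp_x : \mathfrak{g} \to T_{Gx, x}$ is an isomorphism; composing with the injection into $T_{F, x}$ gives the required injectivity of the dual, whence $\overline{\psi_x}$ is surjective and therefore, by Nakayama, so is $\psi_x$. The main technical step is the careful bookkeeping to identify the dual of $\overline{\psi_x}$ with the orbit-differential map; after that, everything reduces to the standard characteristic zero statement that finite stabilizers force the orbit map to be étale.
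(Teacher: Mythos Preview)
Your proposal is correct and follows essentially the same route as the paper's proof: reduce to the residue field by Nakayama, pass to the $k$-linear dual, and identify that dual with the composition $\mathfrak{g}\xrightarrow{dp_x} T_{Gx,x}\hookrightarrow \Der_{A_y^G}(A_x,k(x))$, which is injective because finite isotropy in characteristic~$0$ makes $dp_x$ an isomorphism. The paper's argument is the same but stated more tersely; you have simply spelled out the identifications and the role of characteristic~$0$ in more detail.
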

\begin{proof} By Nakayama's Lemma it is enough to prove
  $\Omega_{A_x/A_y^G} \otimes_{A_x} k(x) \to
\Hom_k(\mathfrak{g}, k(x))$ is surjective, which we can do by showing
that the $k$-dual map is injective. But this is the map $\mathfrak{g}
\otimes k(x) \to  \Der_{A_y^G}(A_x,k(x))$ induced by $\phi$. Thus it
factors in the following way $$\mathfrak{g} \otimes k(x) \simeq T_{Gx,x} \subseteq
\Der_{A_y^G}(A_x,k(x))\, .$$
\end{proof}

We now apply these local lemmas to our situation. By $\CF \otimes_k
\mathfrak{g}$ we mean the $\CS G$-module that as $\CS$-module is
isomorphic to the sum of $\dim \mathfrak{g}$ copies of $\CF$ and the
$G$-action is given by $g(f \otimes \xi) = gf \otimes \Ad_g(\xi)$
where $\Ad_g:\mathfrak{g} \to \mathfrak{g}$ is the adjoint
action. Similarly we can define $\CF \otimes_k
\mathfrak{g}^\ast$ for the dual of the adjoint action.
\begin{lemma}\label{eu} If 
$\depth_Z\CS \ge 2$ then there is an $\CS G$ isomorphism 
$$\mathcal{T}^0(\CS/\CO_X; \CS) \simeq
\mathcal{S} \otimes_k \mathfrak{g}\, .$$ 
More generally if $\depth_Z \CF \ge 2$ then $\mathcal{T}^0(\CS/\CO_X; \CF) \simeq
\mathcal{F} \otimes_k \mathfrak{g}$ as $\CS G$-modules.
\end{lemma}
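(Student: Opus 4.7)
Throughout we use the identification $\mathcal{T}^0(\CS/\CO_X;\CF)=\CDer_{\CO_X}(\CS,\CF)$. The strategy is to construct a natural $\CS G$-equivariant map $\phi_\CF:\CF\otimes_k\mathfrak{g}\to\CDer_{\CO_X}(\CS,\CF)$, verify it is an isomorphism after restriction to $U$ using Lemma \ref{isolemma}, and then extend from $U$ to $X$ via the depth hypothesis.

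\noindent\emph{Step 1 (Construction and equivariance).} On any affine open $V\subset X$ with $\pi^{-1}(V)=\Spec A$, the formula $f\otimes\xi\mapsto\bigl(s\mapsto f\cdot\phi_A(1\otimes\xi)(s)\bigr)$ defines a map $\CF(V)\otimes\mathfrak{g}\to\Der_{\CO_X(V)}(\CS(V),\CF(V))$, where $\phi_A$ is the canonical Lie algebra map recalled before Lemma \ref{inj} and the product makes sense via the $\CS$-module structure on $\CF$. Naturality of $\phi_A$ in $A$ lets these glue to a global $\phi_\CF$. Equipping $\CF\otimes_k\mathfrak{g}$ with the tensor action $g(f\otimes\xi)=gf\otimes\Ad_g\xi$, the standard identity $g\,\phi_A(1\otimes\xi)\,g^{-1}=\phi_A(1\otimes\Ad_g\xi)$ (compatibility of the $G$-action with its infinitesimal action) combined with $(g\cdot D)(s)=g\,D(g^{-1}s)$ shows $\phi_\CF$ is $G$-equivariant, hence an $\CS G$-module map.

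\noindent\emph{Step 2 (Isomorphism on $U$).} By definition of a $G$-quadruple, $\pi|_{U'}:U'\to U$ is a geometric quotient that is smooth of relative dimension $\dim G$, so Lemma \ref{isolemma} applied stalk-wise to affine opens covering $U$ yields that $\phi_\CS|_U$ is an isomorphism. Smoothness further forces $\Omega_{\CS/\CO_X}|_U$ to be locally free of rank $\dim\mathfrak{g}$, hence the natural map $\CDer_{\CO_X}(\CS,\CS)\otimes_\CS\CF\to\CDer_{\CO_X}(\CS,\CF)$ is an isomorphism over $U$; combined with $\phi_\CS|_U$ this gives the statement for arbitrary $\CF$ on $U$.

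\noindent\emph{Step 3 (Depth extension).} Let $j:U\hookrightarrow X$ be the inclusion. The hypothesis $\depth_Z\CF\ge 2$ (resp.\ $\depth_Z\CS\ge 2$) means $j_*j^*\CF=\CF$ (resp.\ $j_*j^*\CS=\CS$), so that $\CF\otimes_k\mathfrak{g}$ has depth $\ge 2$ along $Z$ since it is a finite direct sum of copies of $\CF$. For the right-hand side, since $\pi$ is affine and locally of finite type, $\Omega_{\CS/\CO_X}$ is locally finitely presented; a local presentation $\CS^n\to\CS^m\to\Omega_{\CS/\CO_X}\to 0$ yields an exact sequence
\[
0\to\CHom_\CS(\Omega_{\CS/\CO_X},\CF)\to\CF^m\to\CF^n,
\]
and the long exact sequence of local cohomology with support in $Z$ forces $\mathcal{H}^0_Z=\mathcal{H}^1_Z=0$ on the kernel. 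Thus $\mathcal{T}^0(\CS/\CO_X;\CF)$ also satisfies $j_*j^*\mathcal{T}^0=\mathcal{T}^0$, and applying $j_*j^*$ to the isomorphism obtained in Step 2 promotes it to an isomorphism on all of $X$.

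\noindent\emph{Main obstacle.} The delicate part is Step 3: propagating the depth bound through $\CHom_\CS(\Omega_{\CS/\CO_X},-)$. This rests on the local finite presentation of $\Omega_{\CS/\CO_X}$ and the left-exactness of $\CHom$, so that the kernel of a map between depth-$\ge 2$ sheaves inherits depth $\ge 2$; the $G$-equivariance and the iso-on-$U$ content are then essentially formal consequences of Lemma \ref{isolemma}.
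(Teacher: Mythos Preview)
Your proof is correct and follows essentially the same route as the paper: establish the isomorphism over $U$ via Lemma~\ref{isolemma}, then use $\depth_Z\ge 2$ to extend to all of $X$ through $j_\ast j^\ast$. You supply more detail than the paper does---in particular the explicit construction of the global equivariant map $\phi_\CF$ and the justification (via a local finite presentation of $\Omega_{\CS/\CO_X}$ and left-exactness of $\mathcal{H}om$) that $\mathcal{T}^0(\CS/\CO_X;\CF)$ inherits depth $\ge 2$, which the paper simply asserts.
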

\begin{proof} By definition  $\pi_{\mid U^\prime}$ satisfies locally
 the conditions in Lemma \ref{isolemma} . This globalizes to an
 isomorphism $\Theta_{U'/U} \simeq \CO_{U'} \otimes_k
 \mathfrak{g}$. Since $\pi$ is smooth on $U^\prime$ we get
 $\Omega^1_{U^\prime/U} \simeq \CO_{U'}  \otimes \mathfrak{g}^\ast$. Since
 $\mathcal{T}^0(\CS/\CO_X; \CF) =
 \mathcal{H}om(\Omega^1_{\CS/\CO_X}, \CF)$ it also has depth $\ge 2$ in $Z$,
 cf. Theorem~\ref{supp}. It is
therefore isomorphic to $j_\ast 
\mathcal{H}om(\pi_\ast\Omega^1_{U^\prime/U} , \CF_{\mid U}) \simeq \CF
\otimes_k \mathfrak{g}$ where $j$ is the inclusion $U \subseteq X$.
\end{proof}

For example let $G = \mathbb{G}_m$ and $A$ be a finitely generated
algebra, or a localization of such, with algebra generators $x_1,
\dots , x_n$. Then $\mu: A \to A \otimes k[t, t^{-1}]$ is of the form
$f(x_1, \dots , x_n) \mapsto f(t^{a_1}x_1, \dots , t^{a_n}x_n)$ for
integers $a_1, \dots , a_n$. We get $$\phi(1
\otimes \frac{d}{dt}_{\mid t=1})(f) =  \frac{d}{dt} f(t^{a_1}x_1,
\dots , t^{a_n}x_n)_{\mid t=1} = \sum_{i=1}^n a_i x_i \frac{\partial
  f}{\partial x_i}$$ 
by the chain rule. Thus the generator of $\mathfrak{g}$ maps to the
Euler derivation of the $\ZZ$-graded algebra $A$. This motivates the
name in the following definition.

\begin{definition} \label{eudef} Assuming $\depth_Z\CS \ge 2$, let
$E_1, \dots, E_r$ be a set of global sections that generate
the free sheaf $\mathcal{T}^0(\CS/\CO_X; \CS)$. We call such
a set a \emph{set of Euler derivations} for the $G$-quadruple. A
set of Euler derivations defines an $\CS G$-map $E:
\Omega^1_{\CS/\CO_X} \to \CS \otimes \mathfrak{g}^\ast$. Let
$\mathcal{Q} = \coker(E)$ and $Z(\mathcal{Q})$ be the support of $\mathcal{Q}$. 
\end{definition}

The morphism $E$ is just the natural map
$\Omega^1_{\CS/\CO_X} \to (\Omega^1_{\CS/\CO_X})^{\vee\vee}$ after
choosing a basis for the free sheaf
$(\Omega^1_{\CS/\CO_X})^{\vee\vee}$ thus we have an exact sequence 
\begin{equation} \label{euseq} 0 \to
\mathcal{T}or(\Omega^1_{\CS/\CO_X}) \to \Omega^1_{\CS/\CO_X}
\xrightarrow{E} \CS \otimes \mathfrak{g}^\ast
\to \CQ \to 0 \end{equation} 
where $\mathcal{T}or(\Omega^1_{\CS/\CO_X})$
is the torsion
submodule of
$\Omega^1_{\CS/\CO_X}$.

If $\depth_Z\CS \ge 2$ and the sheaf $\mathcal{T}^1(\CS/\CO_X; \CS)^G
= 0$ then the Zariski-Jacobi sequence for $k \to \CO_X \to \CS$ leads
to a short
exact sequence  
$$0 \to \mathcal{T}^0(\CS/\CO_X; \CS)^G \to (\mathcal{T}^0_\CS)^G \to
\mathcal{T}^0(\CO_X/k; \CS)^G \to 0\, .$$
Therefore  by Lemma \ref{eu} we can make the following definition.
\begin{definition} \label{defeuseq}If $\depth_Z\CS \ge 2$ and the sheaf
  $\mathcal{T}^1(\CS/\CO_X; \CS)^G = 0$ then we call the short 
exact sequence 
$$0 \to (\CS \otimes \mathfrak{g})^G \to \Theta_\CS^G \to
\Theta_X \to 0$$ the \emph{Euler sequence} associated to the
$G$-quadruple. Here $\Theta_X = \mathcal{T}^0(\CO_X/k; \CO_X)$.
\end{definition}

\begin{remark} If $G \to \GL(V)$ is a representation and $S = \Sym
  (V^\ast)$ then $\Der_k(S) \simeq S \otimes V$ as $SG$-module. From
  the above we have an $SG$-map  $S \otimes \mathfrak{g} \to S \otimes
  V$. In this case it is induced from the natural map $\mathfrak{g} \to
  \Hom(V,V) \simeq \Sym^1(V^\ast) \otimes V$ given by the representation.
\end{remark}

\begin{lemma}\label{tiz} If
$\depth_Z\CF \ge 2$ then $$\mathcal{T}^i(\CS/\CO_X; \CF) \simeq
\mathcal{E}xt^i_\CS(\Omega^1_{\CS/\CO_X},\CF) \text{ for $i =
0,1,2$. }$$
\end{lemma}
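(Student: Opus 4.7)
The plan is to invoke the hyperext spectral sequence combined with the smoothness of $\pi|_{U'}$ and the depth hypothesis on $\CF$. Since $\mathcal{L}^{\CS/\CO_X}_\bullet$ is a complex of locally free $\CS$-modules, filtering by its homology yields the standard spectral sequence
$$E_2^{p,q}=\mathcal{E}xt^q_\CS\bigl(\mathcal{H}_p(\mathcal{L}^{\CS/\CO_X}_\bullet),\CF\bigr)\;\Longrightarrow\;\mathcal{T}^{p+q}(\CS/\CO_X;\CF),$$
whose $p=0$ column is $E_2^{0,q}=\mathcal{E}xt^q_\CS(\Omega^1_{\CS/\CO_X},\CF)$. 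The aim is to show that for $i\leq 2$ the abutment collapses onto this column.

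First, I would show that $\mathcal{H}_p(\mathcal{L}^{\CS/\CO_X}_\bullet)$ is supported on $Z$ for every $p\geq 1$. Since $\pi|_{U'}\colon U'\to U$ is smooth of relative dimension $\dim G$, the restricted cotangent complex is quasi-isomorphic to the locally free sheaf $\Omega^1_{U'/U}$ concentrated in degree zero; the affinity of $\pi$ transports this quasi-isomorphism to $U$, so the higher homology sheaves of $\mathcal{L}^{\CS/\CO_X}_\bullet$ vanish on $U$.

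Second, the depth hypothesis together with the classical depth--Ext estimate give $\mathcal{E}xt^q_\CS(\mathcal{G},\CF)=0$ for every coherent $\CS$-module $\mathcal{G}$ supported on $Z$ and every $q<2$. Hence $E_2^{p,q}=0$ whenever $p\geq 1$ and $q\leq 1$, so the only possibly non-zero term on the diagonal $p+q=i\leq 2$ is $E_2^{0,i}$. All potential differentials $d_r\colon E_r^{0,i}\to E_r^{r,i-r+1}$ with $r\geq 2$ land in the vanishing region ($p=r\geq 2$ and $q=i-r+1\leq 1$ when $i\leq 2$), and incoming differentials would require negative $p$; thus $\mathcal{T}^i(\CS/\CO_X;\CF)\cong E_2^{0,i}=\mathcal{E}xt^i_\CS(\Omega^1_{\CS/\CO_X},\CF)$ for $i=0,1,2$.

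The main obstacle will be verifying the depth--Ext vanishing in the sheafy $\CS$-module setting: the classical statement (Ext into a module of depth $\geq d$ along $Z$ vanishes below degree $d$ for modules supported on $Z$) must be transported along the identification of depth along $Z$ of an $\CS$-module with depth relative to the ideal of $Z$ inside $\CS$. This identification holds because $\pi$ is affine, so the argument reduces to the standard commutative-algebra fact at each stalk.
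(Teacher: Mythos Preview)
Your argument is correct and is essentially the paper's own proof: the paper invokes the same spectral sequence $\mathcal{E}xt^p_\CS(\mathcal{T}_q(\CS/\CO_X;\CS),\CF)\Rightarrow\mathcal{T}^{p+q}(\CS/\CO_X;\CF)$ (your $\mathcal{H}_p(\mathcal{L}^{\CS/\CO_X}_\bullet)$ is their $\mathcal{T}_p(\CS/\CO_X;\CS)$, and your indices $p,q$ are swapped relative to theirs), observes that the higher cotangent homology is supported on $Z$, and uses the depth hypothesis to kill the relevant $\mathcal{E}xt$ terms. You have simply spelled out in more detail why the support lies in $Z$ and why the differentials vanish.
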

\begin{proof} Consider the spectral sequence
$\mathcal{E}xt^p_\CS(\mathcal{T}_q(\CS/\CO_X; \CS),\CF) \Rightarrow
\mathcal{T}^{p+q}(\CS/\CO_X; \CF)$. Since the
$\mathcal{T}_q(\CS/\CO_X; \CS)$ are supported on $Z$ when $q \ge 1$,
the depth condition yields the result.
\end{proof}

Assume $\CS$ is a sheaf of $\CO_X$-algebras on a
scheme $X$ and $Z \subseteq X$ is  locally closed. If $\mathcal{F}$
and $\mathcal{G}$ are $\CS$-modules, denote by $\Ext^i_{\CS,
Z}(\mathcal{F}, \mathcal{G})$, respectively $\mathcal{E}xt^i_{\CS,
Z}(\mathcal{F}, \mathcal{G})$, the higher derived functors of
$\mathcal{G} \mapsto H^0_Z (X, \mathcal{H}om_\CS(\mathcal{F},
\mathcal{G}))$, respectively $\mathcal{G} \mapsto \mathcal{H}^0_Z(X,
\mathcal{H}om_\CS(\mathcal{F}, \mathcal{G}))$. We refer to SGA 2
Expos\`{e} VI (\cite{gr:sga2}) for details and the results we will
use.

\begin{lemma}\label{t1rel} Assume $\depth_Z\CF \ge 2$. 
\begin{list}{\textup{(\roman{temp})}}{\usecounter{temp}}
\item There is an isomorphism $\mathcal{E}xt^1_\CS(\Omega^1_{\CS/\CO_X},\CF) \simeq
\mathcal{H}om_\CS(\CQ, \mathcal{H}^2_Z(\CF))$.
\item  If $\depth_Z (\CF
\otimes \mathfrak{g})^G \ge
3$ or $\CQ = 0$ then $$\mathcal{T}^1(\CS/\CO_X; \CF)^G =
\mathcal{E}xt^1_\CS(\Omega^1_{\CS/\CO_X},\CF)^G
=\mathcal{H}om_\CS(\CQ, \mathcal{H}^2_Z(\CF))^G =0 \, .$$ 
\end{list} 
\end{lemma}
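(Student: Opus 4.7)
The plan is to start from Lemma \ref{tiz}, which (given $\depth_Z\CF \ge 2$) already identifies $\mathcal{T}^1(\CS/\CO_X;\CF)$ with $\mathcal{E}xt^1_\CS(\Omega^1_{\CS/\CO_X},\CF)$. What remains is to compute this $\mathcal{E}xt^1$ using the Euler sequence \eqref{euseq} and to handle the $G$-invariants in part (ii). I split \eqref{euseq} into the two short exact sequences
\begin{equation*}
0 \to \mathcal{T}or(\Omega^1_{\CS/\CO_X}) \to \Omega^1_{\CS/\CO_X} \to \Image(E) \to 0,
\qquad
0 \to \Image(E) \to \CS\otimes\mathfrak{g}^\ast \to \CQ \to 0.
\end{equation*}
Observe that both $\mathcal{T}or(\Omega^1_{\CS/\CO_X})$ and $\CQ$ are supported on $Z$: on $U$ the quotient map $\pi$ is smooth, so Lemma \ref{isolemma} makes $E$ an isomorphism there, killing both sheaves.

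Next I invoke the standard vanishing from SGA 2 Exp.~VI: if $\mathcal{K}$ is a coherent $\CS$-module supported on $Z$ and $\depth_Z\CF \ge d$, then $\mathcal{E}xt^i_\CS(\mathcal{K},\CF) = 0$ for $i<d$. With $d=2$, applying $\mathcal{H}om_\CS(-,\CF)$ to the first short exact sequence gives $\mathcal{E}xt^1_\CS(\Omega^1_{\CS/\CO_X},\CF) \simeq \mathcal{E}xt^1_\CS(\Image(E),\CF)$, and applying it to the second sequence (using that $\CS\otimes\mathfrak{g}^\ast$ is $\CS$-free) yields $\mathcal{E}xt^1_\CS(\Image(E),\CF) \simeq \mathcal{E}xt^2_\CS(\CQ,\CF)$. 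Finally I use the spectral sequence $\mathcal{E}xt^p_\CS(\CQ,\mathcal{H}^q_Z(\CF)) \Rightarrow \mathcal{E}xt^{p+q}_{\CS,Z}(\CQ,\CF)$. Since $\CQ$ lives on $Z$, the right-hand side equals $\mathcal{E}xt^{p+q}_\CS(\CQ,\CF)$; and the hypothesis $\depth_Z\CF\ge 2$ forces $\mathcal{H}^0_Z(\CF)=\mathcal{H}^1_Z(\CF)=0$, so only the row $q=2$ contributes in total degree~$2$. This produces the edge isomorphism $\mathcal{E}xt^2_\CS(\CQ,\CF) \simeq \mathcal{H}om_\CS(\CQ,\mathcal{H}^2_Z(\CF))$, which combined with the chain of isomorphisms above gives (i).

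For (ii), the case $\CQ=0$ is immediate, so assume the depth hypothesis instead. Applying $\mathcal{H}om_\CS(-,\mathcal{H}^2_Z(\CF))$ to the surjection $\CS\otimes\mathfrak{g}^\ast \twoheadrightarrow \CQ$ gives an injection
\begin{equation*}
\mathcal{H}om_\CS(\CQ,\mathcal{H}^2_Z(\CF)) \hookrightarrow \mathcal{H}om_\CS(\CS\otimes\mathfrak{g}^\ast,\mathcal{H}^2_Z(\CF)) \simeq \mathcal{H}^2_Z(\CF)\otimes\mathfrak{g}.
\end{equation*}
Taking $G$-invariants preserves this (as $(-)^G$ is exact by reductivity), so it suffices to show $(\mathcal{H}^2_Z(\CF)\otimes\mathfrak{g})^G = 0$. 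Since $\mathfrak{g}$ is finite-dimensional and $\mathcal{H}^2_Z$ commutes with the exact functor $(-)^G$, this group equals $\mathcal{H}^2_Z\bigl((\CF\otimes\mathfrak{g})^G\bigr)$, which vanishes under the assumed $\depth_Z(\CF\otimes\mathfrak{g})^G \ge 3$.

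The main obstacle I anticipate is the bookkeeping around the two simultaneous spectral-sequence/depth arguments — first the Ext-to-local-cohomology spectral sequence that moves $\mathcal{E}xt^2(\CQ,\CF)$ into $\mathcal{H}om(\CQ,\mathcal{H}^2_Z(\CF))$, and then the commutation of $\mathcal{H}^2_Z$ with $(-)^G$ and with tensoring by the finite-dimensional $G$-module $\mathfrak{g}$. Once those are in place, the computation is a clean unwinding of the Euler sequence.
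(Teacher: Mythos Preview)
Your argument is correct and complete, but it follows a genuinely different route from the paper. The paper runs the SGA~2 long exact sequence relating $\mathcal{E}xt^i_{\CS,Z}(\Omega^1_{\CS/\CO_X},\CF)$, $\mathcal{E}xt^i_\CS(\Omega^1_{\CS/\CO_X},\CF)$, and the higher direct images $\mathcal{R}^ij_\ast$ on $U$; after using Lemma~\ref{eu} and the depth hypothesis it identifies $\mathcal{E}xt^1_\CS(\Omega^1_{\CS/\CO_X},\CF)$ with the kernel of an explicit map
\[
\varepsilon:\mathcal{H}^2_Z(\CF)\otimes\mathfrak{g}\longrightarrow \mathcal{H}om_\CS(\Omega^1_{\CS/\CO_X},\mathcal{H}^2_Z(\CF))
\]
induced by $E$, and then reads off $\ker\varepsilon\simeq\mathcal{H}om_\CS(\CQ,\mathcal{H}^2_Z(\CF))$ directly from \eqref{euseq}. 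You instead split \eqref{euseq} into two short exact sequences and pass through the intermediate module $\mathcal{E}xt^2_\CS(\CQ,\CF)$, applying the local-cohomology spectral sequence to $\CQ$ rather than to $\Omega^1_{\CS/\CO_X}$. Your route is a little more direct for part~(i) taken in isolation; the paper's organization pays off later, since the map $\varepsilon$ and the same $\mathcal{E}xt_{\CS,Z}$ long exact sequence are reused essentially verbatim in the proof of Proposition~\ref{h30} (see \eqref{exzseq}). For part~(ii) the two arguments agree.
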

\begin{proof} By Lemma \ref{eu},
  $\mathcal{H}om(\Omega^1_{\CS/\CO_X} , \CF)
\simeq \CF \otimes \mathfrak{g}$ so we get an exact
sequence
$$\cdots  \to \mathcal{E}xt^1_{\CS,
Z}(\Omega^1_{\CS/\CO_X}, \CF) \to
\mathcal{E}xt^1_{\CS}(\Omega^1_{\CS/\CO_X}, \CF) \to
\mathcal{R}^1j_\ast(\CF \otimes \mathfrak{g})_{\mid U}
\xrightarrow{\varepsilon} \mathcal{E}xt^2_{\CS,
Z}(\Omega^1_{\CS/\CO_X}, \CF) \to \cdots $$ from the long exact
sequence for $\mathcal{E}xt^i_{\CS, Z}(\Omega^1_{\CS/\CO_X},
\CF)$. Here $j$ is the inclusion of $U$ in $X$. Now since $\depth_Z\CF
\ge 2$ the left sheaf vanishes. The sheaf
$\mathcal{R}^1j_\ast(\CF \otimes \mathfrak{g})_{\mid U}$ is isomorphic to
$\mathcal{H}^2_Z(\CF \otimes \mathfrak{g}) \simeq \mathcal{H}^2_Z(\CF)
\otimes \mathfrak{g}$. Using the spectral sequence
$\mathcal{E}xt^p_{\CS}(\Omega^1_{\CS/\CO_X}, \mathcal{H}^q_Z(\CF))
\Rightarrow \mathcal{E}xt^{p+q}_{\CS, Z}(\Omega^1_{\CS/\CO_X}, \CF)$
we get $$\mathcal{E}xt^2_{\CS, Z}(\Omega^1_{\CS/\CO_X}, \CF) \simeq
\mathcal{H}om_{\CS}(\Omega^1_{\CS/\CO_X}, \mathcal{H}^2_Z(\CF)) \, .$$
Thus our exact sequence becomes
$$0 \to
\mathcal{E}xt^1_{\CS}(\Omega^1_{\CS/\CO_X}, \CF) \to
\mathcal{H}^2_Z(\CF) \otimes \mathfrak{g} \xrightarrow{\varepsilon}
\mathcal{H}om_{\CS}(\Omega^1_{\CS/\CO_X}, \mathcal{H}^2_Z(\CF)) \to
\cdots $$ where $\varepsilon$ is the map induced by $E$ in the
sequence \eqref{euseq}. It follows
that $$\mathcal{E}xt^1_{\CS}(\Omega^1_{\CS/\CO_X}, \CF) = \ker
\varepsilon \simeq \mathcal{H}om_\CS(\CQ, \mathcal{H}^2_Z(\CF))$$
proving the first statement.

Clearly if $\CQ = 0$ then $\mathcal{E}xt^1_{\CS}(\Omega^1_{\CS/\CO_X},
\CF)= 0$ and the
inclusion $$\mathcal{E}xt^1_{\CS}(\Omega^1_{\CS/\CO_X}, \CF)^G
\hookrightarrow \mathcal{H}^2_Z(\CF \otimes \mathfrak{g})^G \simeq
\mathcal{H}^2_Z((\CF \otimes \mathfrak{g})^G)$$ yields the second
statement. \end{proof}

We now have conditions for the existence of an Euler sequence.
\begin{theorem} \label{thmeuseq} If $\depth_Z\CS \ge 2$ and
  $\mathcal{H}om_\CS(\CQ, \mathcal{H}^2_Z(\CS))^G = 0$  then there is an Euler sequence
$$0 \to (\CS \otimes \mathfrak{g})^G \to \Theta_\CS^G \to
\Theta_X \to 0 \, .$$
\end{theorem}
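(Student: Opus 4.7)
The plan is to reduce the statement to Definition \ref{defeuseq}, which asserts that an Euler sequence exists precisely when $\depth_Z\CS \ge 2$ and $\mathcal{T}^1(\CS/\CO_X;\CS)^G = 0$. The first hypothesis of the theorem is already one of these conditions, so the entire content of the proof will be to verify that $\mathcal{H}om_\CS(\CQ, \mathcal{H}^2_Z(\CS))^G = 0$ forces $\mathcal{T}^1(\CS/\CO_X;\CS)^G = 0$.

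First I would apply Lemma \ref{tiz} with $\CF = \CS$; since $\depth_Z\CS \ge 2$ by hypothesis, this identifies $\mathcal{T}^1(\CS/\CO_X;\CS)$ with $\mathcal{E}xt^1_\CS(\Omega^1_{\CS/\CO_X},\CS)$. Next I would invoke Lemma \ref{t1rel}(i), again with $\CF = \CS$ (the depth hypothesis is satisfied), to obtain the further identification
\[
\mathcal{E}xt^1_\CS(\Omega^1_{\CS/\CO_X},\CS) \simeq \mathcal{H}om_\CS(\CQ, \mathcal{H}^2_Z(\CS))\, .
\]
These isomorphisms are constructed functorially from the defining exact sequence \eqref{euseq} of $\CQ$, so they are $\CS G$-equivariant.

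Taking $G$-invariants, which is an exact functor since $G$ is linearly reductive, and using the theorem's second hypothesis, one concludes
\[
\mathcal{T}^1(\CS/\CO_X;\CS)^G \simeq \mathcal{H}om_\CS(\CQ, \mathcal{H}^2_Z(\CS))^G = 0\, .
\]
At this point Definition \ref{defeuseq} directly yields the Euler sequence, completing the argument.

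There is essentially no obstacle: the theorem is a packaging of the content of Lemmas \ref{tiz} and \ref{t1rel}(i). The one point requiring minor care is checking that the isomorphisms in those lemmas are compatible with the $G$-action, which follows from the fact that everything in sight ($\Omega^1_{\CS/\CO_X}$, $\CQ$, $\mathcal{H}^2_Z(\CS)$, and the spectral sequences used) is naturally a diagram in the category of $\CS G$-modules.
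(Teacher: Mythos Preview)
Your proposal is correct and matches the paper's approach: the theorem is stated immediately after Lemmas \ref{tiz} and \ref{t1rel} with the sentence ``We now have conditions for the existence of an Euler sequence,'' and no further proof is given precisely because it follows, as you explain, by combining those two lemmas with Definition \ref{defeuseq}. Your remark on $G$-equivariance of the isomorphisms is the one thing the paper leaves implicit.
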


\begin{proposition} \label{zqpos} The closed subset $Z(\mathcal Q)
  \subset X$ is contained in 
  the set of points $x \in X$ where $\pi^{-1}(x)$ contains a point with
  positive dimensional isotropy group.
\end{proposition}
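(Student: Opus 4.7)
The plan is to transfer the question from $X$ to $X^\prime$ using the affine map $\pi$ and then perform a pointwise fiber analysis on $X^\prime$. First I would use that $\pi$ is affine and surjective (being a good quotient), so any quasi-coherent $\CS$-module $\mathcal{M}$ on $X$ corresponds to a quasi-coherent sheaf $\widetilde{\mathcal{M}}$ on $X^\prime$ with $\supp_X \mathcal{M} = \pi(\supp_{X^\prime} \widetilde{\mathcal{M}})$, which follows from a routine localization argument on affine charts. Applying this to $\CQ = \coker(E)$, whose corresponding sheaf on $X^\prime$ is $\coker(\Omega^1_{X^\prime/X} \to \CO_{X^\prime} \otimes \mathfrak{g}^\ast)$, it suffices to show that if $x^\prime \in X^\prime$ has a finite (i.e., zero-dimensional) isotropy group $G_{x^\prime}$, then this cokernel vanishes in a neighborhood of $x^\prime$.

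Next, by Nakayama, vanishing of the cokernel stalk at $x^\prime$ is equivalent to surjectivity on the fiber $\Omega^1_{X^\prime/X, x^\prime} \otimes k(x^\prime) \twoheadrightarrow \mathfrak{g}^\ast$, which by $k(x^\prime)$-duality is equivalent to injectivity of the dual $\mathfrak{g} \to T_{X^\prime/X, x^\prime}$. I would then identify this dual map explicitly: by Lemma \ref{eu} the Euler derivations provide a global trivialization $\CS \otimes \mathfrak{g} \simeq \mathcal{T}^0(\CS/\CO_X;\CS)$, and by Definition \ref{eudef} the map $E$ is the canonical morphism $\Omega^1_{\CS/\CO_X} \to (\Omega^1_{\CS/\CO_X})^{\vee\vee}$ expressed in the resulting basis of $(\Omega^1_{\CS/\CO_X})^{\vee\vee}$. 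Tracing through these identifications, the fiberwise dual of $E$ at $x^\prime$ is exactly the Lie algebra action map $\phi_{x^\prime}: \mathfrak{g} \to T_{X^\prime/X, x^\prime}$ described preceding Lemma \ref{inj}, and this factors as $\mathfrak{g} \xrightarrow{dp_{x^\prime}} T_{Gx^\prime, x^\prime} \hookrightarrow T_{X^\prime/X, x^\prime}$, where $p_{x^\prime}: G \to Gx^\prime$ is the orbit map.

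To conclude, I would use that the scheme-theoretic fiber of $p_{x^\prime}$ over $x^\prime$ is the stabilizer $G_{x^\prime}$, so $\ker dp_{x^\prime} = \operatorname{Lie}(G_{x^\prime})$. Since the characteristic is zero, every algebraic group is smooth, so $\dim \operatorname{Lie}(G_{x^\prime}) = \dim G_{x^\prime}$; hence $\phi_{x^\prime}$ is injective precisely when $G_{x^\prime}$ is zero-dimensional. Projecting via $\pi$ yields the asserted containment. In fact the argument produces an equality $Z(\CQ) = \pi(\{x^\prime \in X^\prime : \dim G_{x^\prime} > 0\})$, though only one direction is claimed. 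The main obstacle I foresee is the bookkeeping in the middle paragraph: making sure that the trivialization from the Euler derivations, and the double-dualization defining $E$, are transported consistently to the fiber $k(x^\prime)$-vector spaces so that the dual of the fiber of $E$ really is $\phi_{x^\prime}$ on the nose; the remaining ingredients are standard.
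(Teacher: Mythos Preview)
Your argument is correct and at its core is the same fiberwise computation the paper uses: Nakayama reduces to surjectivity of the fiber of $E$ at $x'$, duality turns this into injectivity of $\phi_{x'}:\mathfrak g\to \Der_{A^G}(A,k(x'))$, and that map factors through $dp_{x'}$ with kernel $\operatorname{Lie}(G_{x'})$. The paper packages this step as Lemma~\ref{omsurj}, which carries the extra hypothesis that the orbit $Gx'$ be closed; it then invokes the invariant-theoretic fact that the stable locus (closed orbit and finite stabilizer) is exactly the complement of $\pi^{-1}(\pi(L))$, so that Lemma~\ref{omsurj} applies on the nose. You instead redo the lemma without the closedness hypothesis, noting that $Gx'$ is in any case locally closed in the fiber, so $T_{Gx',x'}\hookrightarrow \Der_{A^G}(A,k(x'))$ holds regardless; this lets you bypass the stable-locus statement entirely. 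That is a small but genuine simplification.

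One remark on your first reduction: the identity $\supp_X\mathcal M=\pi(\supp_{X'}\widetilde{\mathcal M})$ is not a purely formal localization fact for arbitrary $\mathcal S$-modules; what localization gives is $\pi(\supp_{X'}\widetilde{\mathcal M})\subseteq \supp_X\mathcal M\subseteq \overline{\pi(\supp_{X'}\widetilde{\mathcal M})}$. For $\mathcal Q$ this closes up because $\widetilde{\mathcal Q}$ is $G$-equivariant, so its support is $G$-invariant and closed, and a good quotient sends such sets to closed sets. You should record this, since it is also what makes your final containment $Z(\mathcal Q)\subseteq\pi(L)$ (rather than merely $\overline{\pi(L)}$) go through.
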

\begin{proof} We may prove the result locally on an affine chart,
  i.e.\ for $\pi: \Spec A \to \Spec A^G$. A
  standard result in invariant theory says that the stable points are
  the complement of $\pi^{-1}(\pi(L))$ where $L$ is the locus of
  points with positive dimensional isotropy. The result follows now
  from Lemma \ref{omsurj}.
\end{proof}

\begin{example} For toric varieties Proposition \ref{zqpos} says that $Z(\CQ)$
  is contained in the non-simplicial locus. In fact one can prove
  directly using Euler derivations that these loci are equal. In
  particular for simplicial toric varieties $\CQ = 0$ and we recover the Euler
  sequence of \cite[Theorem 12.1]{bc:ho}
\end{example}

\begin{example} For Grassmannians (Example \ref {grass}) even $Z$ is
  empty, Proposition \ref{zqpos} applies. If $\mathcal{L}$ is the
  universal subbundle and $\mathcal{P}$ is the universal quotient
  bundle then the tangent sheaf is $\mathcal{P} \otimes
  \mathcal{L}^\vee$. Our Euler sequence is just the usual sequence
  induced by the tautological exact sequence.
\end{example}

\subsection{The groups $T^i(\CS/ \CO_X;\CF)$} As we shall see in the computations here and the results in the next section the behavior of the sheaf $\CQ$, the cokernel of the Euler derivations (Definition \ref{eudef}) plays an important role when comparing deformation functors.

\begin{lemma}\label{tizgl} If $\depth_Z\CF \ge 2$ and
$\mathcal{H}om_\CS(\CQ, \mathcal{H}^2_Z(\CF))^G = 0$ then
$$T^i(\CS/\CO_X; \CF)^G \simeq
\Ext^i_\CS(\Omega^1_{\CS/\CO_X},\CF)^G \text{ for } i = 0,1,2 \, .$$
\end{lemma}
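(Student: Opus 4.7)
The plan is to build on Lemma \ref{tiz}, which under the depth hypothesis identifies the cotangent sheaves $\mathcal{T}^i(\CS/\CO_X;\CF)$ with the $\mathcal{E}xt$ sheaves $\mathcal{E}xt^i_\CS(\Omega^1_{\CS/\CO_X},\CF)$ for $i=0,1,2$, and then to transport this sheaf-level isomorphism to the level of global groups via the standard local-to-global spectral sequences. The crucial input is Lemma \ref{t1rel}(i), combined with the hypothesis $\mathcal{H}om_\CS(\CQ,\mathcal{H}^2_Z(\CF))^G=0$, which forces $\mathcal{T}^1(\CS/\CO_X;\CF)^G = \mathcal{E}xt^1_\CS(\Omega^1_{\CS/\CO_X},\CF)^G = 0$.

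First I would invoke Lemma \ref{tiz} to get sheaf isomorphisms $\mathcal{T}^i(\CS/\CO_X;\CF)\simeq \mathcal{E}xt^i_\CS(\Omega^1_{\CS/\CO_X},\CF)$ for $i=0,1,2$, and use Lemma \ref{t1rel}(i) together with the hypothesis to obtain that the $q=1$ sheaf vanishes after taking $G$-invariants. Next I would consider the morphism of local-to-global spectral sequences induced by the natural map from the cotangent complex to its $0$-th homology $\Omega^1_{\CS/\CO_X}$:
\begin{equation*}
H^p(X,\mathcal{E}xt^q_\CS(\Omega^1_{\CS/\CO_X},\CF)) \Rightarrow \Ext^{p+q}_\CS(\Omega^1_{\CS/\CO_X},\CF)
\end{equation*}
mapping into the spectral sequence \eqref{local-global} for $T^{p+q}(\CS/\CO_X;\CF)$. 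Since $G$ is linearly reductive, taking $G$-invariants is exact and commutes with sheaf cohomology on the noetherian scheme $X$, so after passing to invariants I obtain a morphism of first-quadrant spectral sequences
\begin{equation*}
'E_2^{p,q} = H^p(X,\mathcal{E}xt^q_\CS(\Omega^1_{\CS/\CO_X},\CF))^G \longrightarrow E_2^{p,q} = H^p(X,\mathcal{T}^q(\CS/\CO_X;\CF))^G
\end{equation*}
which by Lemma \ref{tiz} is an isomorphism for all $q\le 2$, and is the zero map between zero groups at $q=1$.

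Finally I would run the standard spectral sequence comparison: for the abutments in total degree $n\le 2$, only terms $E_\infty^{p,q}$ with $q\le 2$ contribute. Because the spectral sequences are first quadrant, every differential $d_r$ entering an $E_r^{p,q}$ with $p+q\le 2$ has source $E_r^{p-r,q+r-1}$ which is either zero (when $p-r<0$) or lies in the region $q\le 2$; similarly every differential leaving such a term lands in that region. Hence the morphism induces an isomorphism on $E_\infty^{p,q}$ for all $p+q\le 2$. A five-lemma argument applied to the induced filtrations on $\Ext^n_\CS(\Omega^1_{\CS/\CO_X},\CF)^G$ and $T^n(\CS/\CO_X;\CF)^G$ for $n=0,1,2$ then yields the claimed isomorphisms. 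The main obstacle is bookkeeping of the differentials near the boundary of the first quadrant, but the combination of the first-quadrant vanishing, the restriction to total degree $\le 2$, and the vanishing $'E_2^{p,1}=E_2^{p,1}=0$ keeps everything within the range where the comparison is valid.
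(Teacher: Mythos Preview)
Your proposal is correct and takes essentially the same route as the paper. Both arguments rest on Lemma~\ref{tiz} for the sheaf-level identification, Lemma~\ref{t1rel} for the vanishing of the $q=1$ row after taking invariants, and a comparison of the two local--global spectral sequences; the paper simply makes the $i=2$ case explicit by writing out the resulting four-term edge exact sequences (using $E_2^{p,1}=0$ on both sides) and invoking the Five Lemma, which is exactly what your abstract spectral-sequence comparison reduces to.
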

\begin{proof} Lemma \ref{t1rel} says that under the conditions above, 
$\mathcal{T}^1(\CS/\CO_X; \CF)^G =
\mathcal{E}xt^1_\CS(\Omega^1_{\CS/\CO_X},\CF)^G=0$.  We have local
global spectral sequences which at the $E_2$ level are
\begin{align*}H^p (X, \mathcal{T}^q(\CS/\CO_X; \CF)) &\Rightarrow
T^{p+q}(\CS/\CO_X; \CF) \\ H^p (X,
\mathcal{E}xt^q_\CS(\Omega^1_{\CS/\CO_X},\CF) ) &\Rightarrow
\Ext^{p+q}_\CS(\Omega^1_{\CS/\CO_X},\CF) \, .
\end{align*} Write (just for this proof) $ \mathcal{T}^i
=\mathcal{T}^i(\CS/\CO_X; \CF)$ and $\mathcal{E}^i
=\mathcal{E}xt^i_\CS(\Omega^1_{\CS/\CO_X},\CF)$.  The edge
homomorphisms for the spectral sequence
$\mathcal{E}xt^p_\CS(\mathcal{T}_q(\CS/\CO_X; \CS),\CF) \Rightarrow
\mathcal{T}^{p+q}(\CS/\CO_X; \CF)$ yield natural maps $\mathcal{E}^i
\to \mathcal{T}^i$.  Lemma \ref{tiz} implies $T^i(\CS/\CO_X; \CF)^G
\simeq \Ext^i_\CS(\Omega^1_{\CS/\CO_X},\CF)^G$ for $i=0,1$. Since
$\mathcal{E}xt^1_\CS(\Omega^1_{\CS/\CO_X},\CF)^G = 0$ the spectral
sequences yield a commutative diagram with exact rows
$$
\begin{tikzcd} 0 \arrow{r} & H^2 (X, \mathcal{E}^0 )^G \arrow{r}
\arrow{d} & \Ext^{2}_\CS(\Omega^1_{\CS/\CO_X},\CF)^G \arrow{r}
\arrow{d} & H^0(X, \mathcal{E}^2)^G \arrow{r}\arrow{d} & H^3(X,
\mathcal{E}^0 )^G \arrow{d} \\ 0 \arrow{r} & H^2(X, \mathcal{T}^0)^G
\arrow{r} & T^{2}(\CS/\CO_X; \CF)^G \arrow{r} & H^0(X,
\mathcal{T}^2)^G \arrow{r} & H^3(X, \mathcal{T}^0)^G
\end{tikzcd}
$$
so the isomorphism for $i=2$ follows from Lemma \ref{tiz} and the Five
Lemma.
\end{proof}

\begin{proposition}\label{h30} Assume $\depth_Z\CF \ge 2$
\begin{list}{\textup{(\roman{temp})}}{\usecounter{temp}}
\item If $\mathcal{H}om_\CS(\CQ, \mathcal{H}^2_Z(\CF))^G = 0$ then
$T^1(\CS/\CO_X; \CF)^G \simeq H^1(X, (\CF \otimes \mathfrak{g})^G)$
and there is an exact sequence
\begin{multline*} \ 0 \to H^2(X, (\CF \otimes \mathfrak{g})^G) \to
T^2(\CS/\CO_X; \CF)^G \to H^0(X,
\mathcal{E}xt^2_\CS(\Omega^1_{\CS/\CO_X},\CF)^G) \\ \to H^3(X, (\CF
\otimes \mathfrak{g})^G) \end{multline*}
\item If $\depth_Z (\CF \otimes \mathfrak{g})^G \ge 3$ then there is
an exact sequence
$$ 0 \to \Hom_\CS(\Omega^1_{\CS/\CO_X}, \mathcal{H}^2_Z(\CF))^G
\to H^0(X, \mathcal{E}xt^2_\CS(\Omega^1_{\CS/\CO_X},\CF)^G) \to
\Hom_\CS(\CQ, \mathcal{H}^3_Z(\CF))^G \, .$$
\item If $\CQ = 0$ then $
\mathcal{E}xt^2_\CS(\Omega^1_{\CS/\CO_X},\CF) \simeq
\mathcal{H}om_\CS(\mathcal{T}or(\Omega^1_{\CS/\CO_X}),
\mathcal{H}^2_Z(\CF))$.
\end{list}
\end{proposition}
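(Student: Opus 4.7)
For (iii), the hypothesis $\CQ=0$ collapses the four-term sequence \eqref{euseq} into a short exact sequence
\[ 0\to\mathcal{T}or(\Omega^1_{\CS/\CO_X})\to\Omega^1_{\CS/\CO_X}\to\CS\otimes\mathfrak{g}^\ast\to 0. \]
Applying $\mathbf{R}\mathcal{H}om_\CS(-,\CF)$ and using that $\CS\otimes\mathfrak{g}^\ast$ is locally free yields $\mathcal{E}xt^i_\CS(\Omega^1_{\CS/\CO_X},\CF)\simeq\mathcal{E}xt^i_\CS(\mathcal{T}or(\Omega^1_{\CS/\CO_X}),\CF)$ for all $i\ge 1$. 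By Lemma \ref{eu}, $\Omega^1_{\CS/\CO_X}$ is locally free on $U$, so the torsion subsheaf is supported on $Z$. The local-cohomology spectral sequence $\mathcal{E}xt^p_\CS(\mathcal{T}or,\mathcal{H}^q_Z(\CF))\Rightarrow\mathcal{E}xt^{p+q}_\CS(\mathcal{T}or,\CF)$, together with $\mathcal{H}^0_Z(\CF)=\mathcal{H}^1_Z(\CF)=0$ (from $\depth_Z\CF\ge 2$), then delivers the edge identification $\mathcal{E}xt^2_\CS(\mathcal{T}or,\CF)\simeq\mathcal{H}om_\CS(\mathcal{T}or,\mathcal{H}^2_Z(\CF))$, giving (iii).

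For (i), Lemma \ref{tizgl} gives $T^i(\CS/\CO_X;\CF)^G\simeq\Ext^i_\CS(\Omega^1_{\CS/\CO_X},\CF)^G$ for $i\le 2$, so the plan is to run the local-to-global spectral sequence
\[ E_2^{p,q}=H^p(X,\mathcal{E}xt^q_\CS(\Omega^1_{\CS/\CO_X},\CF))\Rightarrow\Ext^{p+q}_\CS(\Omega^1_{\CS/\CO_X},\CF) \]
through $G$-invariants (exact and compatible with cohomology since $G$ is linearly reductive in characteristic~$0$). Lemma \ref{eu} identifies the $q=0$ row with $H^p(X,(\CF\otimes\mathfrak{g})^G)$; Lemma \ref{t1rel}(i) combined with the standing hypothesis $\mathcal{H}om_\CS(\CQ,\mathcal{H}^2_Z(\CF))^G=0$ kills the $q=1$ row. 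The $p+q=1$ diagonal then collapses to the isomorphism $T^1(\CS/\CO_X;\CF)^G\simeq H^1(X,(\CF\otimes\mathfrak{g})^G)$, and the $p+q=2$ diagonal, with $E_2^{1,1}=0$, unfolds into the claimed four-term exact sequence, the last arrow being $d_3\colon E_3^{0,2}\to E_3^{3,0}=H^3(X,(\CF\otimes\mathfrak{g})^G)$.

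For (ii), apply $\mathbf{R}\mathcal{H}om_\CS(\Omega^1_{\CS/\CO_X},-)$ to the triangle $\mathcal{R}\Gamma_Z\CF\to\CF\to\mathcal{R}j_\ast(\CF|_U)$ as in the proof of Lemma \ref{t1rel}. Using $\mathcal{H}om(\Omega^1|_U,\CF|_U)\simeq\CF|_U\otimes\mathfrak{g}$ (Lemma \ref{eu}) and $\mathcal{R}^ij_\ast\CF|_U\simeq\mathcal{H}^{i+1}_Z(\CF)$ for $i\ge 1$, the resulting long exact sequence reads
\[ \mathcal{E}xt^1\to\mathcal{H}^2_Z(\CF)\otimes\mathfrak{g}\to\mathcal{E}xt^2_{\CS,Z}\to\mathcal{E}xt^2\to\mathcal{H}^3_Z(\CF)\otimes\mathfrak{g}\to\mathcal{E}xt^3_{\CS,Z}, \]
where $\mathcal{E}xt^i$ abbreviates $\mathcal{E}xt^i_\CS(\Omega^1_{\CS/\CO_X},\CF)$. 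Under the depth hypothesis, Lemma \ref{t1rel}(ii) gives $\mathcal{E}xt^1(\Omega^1_{\CS/\CO_X},\CF)^G=0$ and $(\mathcal{H}^2_Z(\CF)\otimes\mathfrak{g})^G=\mathcal{H}^2_Z((\CF\otimes\mathfrak{g})^G)=0$, while the local-$\mathcal{E}xt$ spectral sequence under $\depth_Z\CF\ge 2$ gives $\mathcal{E}xt^2_{\CS,Z}\simeq\mathcal{H}om_\CS(\Omega^1_{\CS/\CO_X},\mathcal{H}^2_Z(\CF))$. Taking $G$-invariants (exact) and then $H^0(X,-)$ (left exact) would then produce the three-term exact sequence in (ii), provided the image of $\mathcal{E}xt^2(\Omega^1_{\CS/\CO_X},\CF)^G$ in $(\mathcal{H}^3_Z(\CF)\otimes\mathfrak{g})^G$ lies in the subsheaf $\mathcal{H}om_\CS(\CQ,\mathcal{H}^3_Z(\CF))^G$ cut out by dualizing the surjection $\CS\otimes\mathfrak{g}^\ast\twoheadrightarrow\CQ$.

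The main obstacle is exactly this last factorization. Since $\mathcal{H}om_\CS(\CQ,\mathcal{H}^3_Z(\CF))=\ker(E^\ast\colon\mathcal{H}^3_Z(\CF)\otimes\mathfrak{g}\to\mathcal{H}om_\CS(\Omega^1_{\CS/\CO_X},\mathcal{H}^3_Z(\CF)))$, what one must check is that the composition of the connecting map $\mathcal{H}^3_Z(\CF)\otimes\mathfrak{g}\to\mathcal{E}xt^3_{\CS,Z}$ with the edge map $\mathcal{E}xt^3_{\CS,Z}\to\mathcal{H}om_\CS(\Omega^1_{\CS/\CO_X},\mathcal{H}^3_Z(\CF))$ of the local-$\mathcal{E}xt$ spectral sequence coincides with $E^\ast$. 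Both arise functorially from $\mathbf{R}\mathcal{H}om(\Omega^1_{\CS/\CO_X},-)$ applied to the triangle defining $\mathcal{R}\Gamma_Z$, so naturality of connecting morphisms (cf.\ SGA 2, Exp.~VI) secures the desired compatibility, after which the image lands in $\ker E^\ast$ and $H^0(X,-)$ finishes the proof.
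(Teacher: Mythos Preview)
Your proof is correct and parts (i) and (ii) follow essentially the same route as the paper: Lemma~\ref{tizgl} plus the local--global spectral sequence for (i), and the long exact sequence for $\mathcal{E}xt^i_{\CS,Z}$ together with the identification of the composite $v\circ u$ with $E^\ast$ for (ii).

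For (iii) your argument is genuinely different and somewhat cleaner. The paper derives (iii) from the machinery built for (ii): it observes that when $\CQ=0$ the map $u$ is injective, truncates the sequence \eqref{exzseq} to
\[
0 \to \mathcal{H}^2_Z(\CF\otimes\mathfrak{g}) \to \mathcal{H}om_\CS(\Omega^1_{\CS/\CO_X},\mathcal{H}^2_Z(\CF)) \to \mathcal{E}xt^2_\CS(\Omega^1_{\CS/\CO_X},\CF) \to 0,
\]
and then matches this against $\mathcal{H}om_\CS(-,\mathcal{H}^2_Z(\CF))$ applied to the short exact sequence \eqref{euseq}. You instead bypass (ii) entirely: the short exact sequence from $\CQ=0$ immediately gives $\mathcal{E}xt^i_\CS(\Omega^1_{\CS/\CO_X},\CF)\simeq\mathcal{E}xt^i_\CS(\mathcal{T}or,\CF)$ for $i\ge 1$, and since $\mathcal{T}or$ is supported on $Z$ the local $\mathcal{E}xt$ spectral sequence with $\mathcal{H}^0_Z(\CF)=\mathcal{H}^1_Z(\CF)=0$ finishes. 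This is more direct and makes (iii) logically independent of (ii), whereas the paper's treatment ties them together.
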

\begin{proof} Since the condition in (i) implies
$\mathcal{T}^1(\CS/\CO_X; \CF)^G = 0$ by Lemma \ref{tiz} and Lemma
\ref{t1rel} (i) follows from the 
local-global spectral sequence and Lemma \ref{eu} and Lemma
\ref{tizgl}.

Let $j$ be the inclusion of $U$ in $X$.  To prove (ii) we consider the
long exact sequence for $\mathcal{E}xt^i_{\CS,
Z}(\Omega^1_{\CS/\CO_X}, \CF)$ as in the proof of Lemma \ref{t1rel},
and get
\begin{multline} \label{exzseq} \mathcal{H}^2_Z(\CF \otimes
\mathfrak{g}) \to \mathcal{E}xt^2_{\CS, Z}(\Omega^1_{\CS/\CO_X}, \CF)
\to \mathcal{E}xt^2_{\CS}(\Omega^1_{\CS/\CO_X}, \CF) \to
\mathcal{H}^3_Z(\CF \otimes \mathfrak{g}) \\ \xrightarrow{u}
\mathcal{E}xt^3_{\CS, Z}(\Omega^1_{\CS/\CO_X}, \CF) \, .
\end{multline}Taking invariants and using the depth assumption this
yields
$$ 0 \to \mathcal{E}xt^2_{\CS,
Z}(\Omega^1_{\CS/\CO_X}, \CF)^G \to
\mathcal{E}xt^2_{\CS}(\Omega^1_{\CS/\CO_X}, \CF)^G \to
\mathcal{H}^3_Z(\CF \otimes \mathfrak{g})^G \xrightarrow{u}
\mathcal{E}xt^3_{\CS, Z}(\Omega^1_{\CS/\CO_X}, \CF)^G \, .$$ Using the
spectral sequence $\mathcal{E}xt^p_{\CS}(\Omega^1_{\CS/\CO_X},
\mathcal{H}^q_Z(\CF)) \Rightarrow \mathcal{E}xt^{p+q}_{\CS,
Z}(\Omega^1_{\CS/\CO_X}, \CF)$ we get $$\mathcal{E}xt^2_{\CS,
Z}(\Omega^1_{\CS/\CO_X}, \CF)^G \simeq
\mathcal{E}xt^0_{\CS}(\Omega^1_{\CS/\CO_X}, \mathcal{H}^2_Z(\CF))^G \,
.$$ Moreover there is a composite map $\varepsilon = v \circ u$
$$ \mathcal{H}^3_Z(\CF \otimes \mathfrak{g}) \xrightarrow{u} \mathcal{E}xt^3_{\CS,
Z}(\Omega^1_{\CS/\CO_X}, \CF) \xrightarrow{v}
\mathcal{H}om_\CS(\Omega^1_{\CS/\CO_X}, \mathcal{H}^3_Z(\CF))$$ where
$v$ is the map to $E_2^{0,3}$ in the above spectral sequence.  Thus
$\ker u \subseteq \ker \varepsilon$.  On the other hand $\varepsilon$
is the map induced by $E$ in the exact sequence
\eqref{euseq}. Therefore $\ker \varepsilon = \mathcal{H}om_\CS(\CQ,
\mathcal{H}^3_Z(\CF))$ and we get the exact sequence in the statement.

To prove (iii) note that if $\CQ = 0$ then even without the depth
assumption the above argument shows that $\ker \varepsilon = 0$. Thus
$u$ is injective. Moreover by Lemma \ref{t1rel},
$\mathcal{E}xt^1_{\CS}(\Omega^1_{\CS/\CO_X}, \CF)$ vanishes so
\eqref{exzseq} becomes
$$ 0 \to  \mathcal{H}^2_Z(\CF \otimes
  \mathfrak{g}) \to \mathcal{H}om_\CS(\Omega^1_{\CS/\CO_X},
\mathcal{H}^2_Z(\CF)) \to
\mathcal{E}xt^2_\CS(\Omega^1_{\CS/\CO_X},\CF) \to 0 \, .$$ The
sequence \eqref{euseq} is now short exact and we can apply
$\mathcal{H}om_\CS(-, \mathcal{H}^2_Z(\CF))$ to get an exact sequence
$$ 0 \to  \mathcal{H}^2_Z(\CF \otimes
  \mathfrak{g}) \to \mathcal{H}om_\CS(\Omega^1_{\CS/\CO_X},
\mathcal{H}^2_Z(\CF)) \to
\mathcal{H}om_\CS(\mathcal{T}or(\Omega^1_{\CS/\CO_X}),
\mathcal{H}^2_Z(\CF))\to 0 \, .$$ This proves (iii).
\end{proof}

\begin{remark} If $\CS$ is regular then $H^0(X,
\mathcal{E}xt^2_\CS(\Omega^1_{\CS/\CO_X},\CF)^G) \to \Hom_\CS(\CQ,
\mathcal{H}^3_Z(\CF))^G$ is surjective. See the proof of Theorem
\ref{afftor} below.
\end{remark}

\subsection{Results}
\begin{example} \label{fG} \emph{Finite $G$.} We can apply the above to the
situation in Example \ref{qs}, i.e. $G \subset \GL_n(\CC)$ is finite
without pseudo-reflections and 
$$(X,Z, S, J) = (\CC^n/G,\Sing(\CC^n/G),\CC[x_1, \dots , x_n],(1)) \, .$$ Assume that
$\codim_Z X \ge 3$ (the singularity need not be isolated). Then since
$X$ is Cohen-Macaulay, $\depth_Z \CO_X \ge 3$ and since $G$ is finite, 
$\depth_{I_Z} S \ge 3$ and $\CQ=0$.  It follows from Proposition
\ref{h30}, Lemma \ref{tiz} and Lemma \ref{defGlemma} that $X$ is
rigid. This was first proven by Schlessinger,  see \cite{sc:rig} and \cite{sc:onr}.

In general consider an affine $G$-quadruple
$$(X,Z, S, J) = (\Spec S^G,Z,S,(1))$$
with finite $G$ and $Z$ the locus where $\pi : \Spec S \to X$ is not
\'{e}tale. Then by the above results we have $\Def_S^G \simeq \Def_X$
if $\depth_{I_Z} S \ge 3$. In particular if $S$ is Cohen-Macaulay and
equidimensional then it
is enough to assume $\codim Z \ge 3$. See also \cite[Section
7]{ste:can}.
\end{example}

Our main result in its most general form is

\begin{theorem}\label{defGtheorem} Let $(X,Z,S,J)$ be a $G$-quadruple
  with associated sheaf of algebras $\CS$ and assume $\depth_Z\CS \ge 2$. If
\begin{itemize}
\item[\textup{(i)}] $\mathcal{H}om_\CS(\CQ, \mathcal{H}^2_Z(\CS))^G = 0$
\item[\textup{(ii)}] $H^0(X,
  \mathcal{E}xt^2_\CS(\Omega^1_{\CS/\CO_X},\CS)^G) = 0$
\item[\textup{(iii)}] $H^2(X, (\CS \otimes \mathfrak{g})^G) = 0$
\item[\textup{(iv)}] $T^1(S/k; H^1_J(S))^G = 0$ and $\Der_k(S, H^2_J(S))^G = 0$
\end{itemize} then $\Def^G_{S} \to \Def_{X}$ is smooth. If moreover
\begin{itemize}
\item[\textup{(v)}] $H^1(X, (\CS \otimes \mathfrak{g})^G) = 0$
\item[\textup{(vi)}] $\Der_k(S, H^1_J(S))^G = 0$
\end{itemize} then $\Def^G_{S}
\to \Def_{X}$  is an isomorphism.  
\end{theorem}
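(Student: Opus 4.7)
The plan is to combine Lemma \ref{defGlemma} (which reduces the smoothness/isomorphism of $\Def_S^G \to \Def_X$ to vanishings of $T^i(\CS/\CO_X;\CS)^G$ and $T^i_J(S/k;S)^G$ for $i=1,2$) with Proposition \ref{h30} and Lemma \ref{lclemma}, which express these exact groups in terms of the cohomological data appearing in hypotheses (i)--(vi). In other words, the work has almost entirely been done already in the preceding subsections, and what remains is a straightforward bookkeeping verification.

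First I would handle the local terms $T^i_J(S/k;S)^G$. By the definition of a $G$-quadruple, $\depth_J S \ge 1$, so Lemma \ref{lclemma} applies and gives
\[
T^1_J(S/k;S) \simeq \Der_k(S, H^1_J(S)),
\]
together with an exact sequence
\[
0 \to T^1(S/k; H^1_J(S)) \to T^2_J(S/k;S) \to T^0(S/k; H^2_J(S)) \to T^2(S/k; H^1_J(S)).
\]
Taking $G$-invariants is exact, so hypothesis (vi) immediately yields $T^1_J(S/k;S)^G=0$, while (iv) forces the outer terms of the $G$-invariant sequence to vanish (noting $T^0(S/k;H^2_J(S)) = \Der_k(S,H^2_J(S))$), so $T^2_J(S/k;S)^G=0$.

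Next I would turn to the relative terms $T^i(\CS/\CO_X;\CS)^G$, where the depth assumption $\depth_Z \CS \ge 2$ together with hypothesis (i) puts us in the setting of Proposition \ref{h30}(i). That proposition directly gives
\[
T^1(\CS/\CO_X;\CS)^G \simeq H^1(X,(\CS\otimes \mathfrak{g})^G),
\]
which vanishes by (v), and supplies an exact sequence
\[
0 \to H^2(X,(\CS\otimes \mathfrak{g})^G) \to T^2(\CS/\CO_X;\CS)^G \to H^0(X,\mathcal{E}xt^2_\CS(\Omega^1_{\CS/\CO_X},\CS)^G),
\]
whose outer terms vanish by (iii) and (ii) respectively, forcing $T^2(\CS/\CO_X;\CS)^G=0$.

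Finally I would invoke Lemma \ref{defGlemma}: the vanishings of $T^2(\CS/\CO_X;\CS)^G$ and $T^2_J(S/k;S)^G$ give smoothness of $\Def_S^G \to \Def_X$; if in addition conditions (v) and (vi) are imposed, the vanishings of $T^1(\CS/\CO_X;\CS)^G$ and $T^1_J(S/k;S)^G$ established above upgrade the map to an isomorphism. The main obstacle in this proof is essentially notational rather than conceptual: one must keep straight the two Zariski–Jacobi / local–global ladders (one for $k \to \pi^{-1}\CO_X \to \CO_{X'}$ comparing $(T^i_S)^G$ to $T^i_X$ via $(T^i_{X'})^G$, the other for $k\to \CO_X \to \CS$ computing $T^i(\CS/\CO_X;\CS)^G$), and verify that the $G$-invariant parts of every spectral sequence involved stay exact—something guaranteed by the linear reductivity of $G$.
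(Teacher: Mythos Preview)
Your proof is correct and follows exactly the approach of the paper, which simply states that the result follows directly from Lemma~\ref{defGlemma} and Proposition~\ref{h30}. You have spelled out the bookkeeping in more detail, in particular making explicit the use of Lemma~\ref{lclemma} to translate hypotheses (iv) and (vi) into the vanishing of $T^i_J(S/k;S)^G$, which the paper leaves implicit.
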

\begin{proof} The result follows directly from Lemma \ref{defGlemma}
and Proposition \ref{h30}.
\end{proof}

Using the same proposition and lemma we get a slight improvement if $\CQ = 0$.
\begin{theorem}\label{Q0thm} Let $(X,Z,S,J)$ be a $G$-quadruple
  with associated sheaf of algebras $\CS$ and assume $\depth_Z\CS \ge 2$. If
\begin{itemize}
\item[\textup{(i)}] $\CQ = 0$
\item[\textup{(ii)}] $\Hom_\CS(\mathcal{T}or(\Omega^1_{\CS/\CO_X}), \mathcal{H}^2_Z(\CS))^G = 0$
\end{itemize} 
and \textup{(iii)} and \textup{(iv)} of Theorem \ref{defGtheorem}
hold, then $\Def^G_{S} \to \Def_{X}$ is smooth. 
If moreover
\textup{(v)} and \textup{(vi)} of Theorem \ref{defGtheorem} hold
then  $\Def^G_{S}
\to \Def_{X}$  is an isomorphism.  
\end{theorem}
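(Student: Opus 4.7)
The plan is to reduce Theorem \ref{Q0thm} to Theorem \ref{defGtheorem} by checking that hypotheses (i) and (ii) of the former imply hypotheses (i) and (ii) of the latter. Hypotheses (iii)--(vi) coincide in the two statements, so no work is required for those.

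For hypothesis (i) the reduction is immediate: if $\CQ = 0$, then $\mathcal{H}om_\CS(\CQ, \mathcal{H}^2_Z(\CS))^G = 0$, giving hypothesis (i) of Theorem \ref{defGtheorem}. For hypothesis (ii) I would apply Proposition \ref{h30}(iii) with $\CF = \CS$; the depth assumption $\depth_Z\CS \ge 2$ is in force and $\CQ = 0$, so the proposition supplies a sheaf isomorphism
\begin{equation*}
\mathcal{E}xt^2_\CS(\Omega^1_{\CS/\CO_X},\CS) \simeq \mathcal{H}om_\CS(\mathcal{T}or(\Omega^1_{\CS/\CO_X}), \mathcal{H}^2_Z(\CS)).
\end{equation*}
Since $G$ is linearly reductive, $(-)^G$ is exact, and commutes with $\mathcal{H}om$, global sections, and the sheaf isomorphism above. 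Taking $G$-invariants and then $H^0(X,-)$ converts hypothesis (ii) of Theorem \ref{Q0thm} into $H^0(X, \mathcal{E}xt^2_\CS(\Omega^1_{\CS/\CO_X},\CS)^G) = 0$, which is exactly hypothesis (ii) of Theorem \ref{defGtheorem}.

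With all hypotheses of Theorem \ref{defGtheorem} verified, its conclusions apply directly: conditions (iii) and (iv) give smoothness of $\Def^G_S \to \Def_X$, and adding (v) and (vi) upgrades this to an isomorphism. There is no substantive obstacle in this reduction; the content of Theorem \ref{Q0thm} is simply the observation that when $\CQ$ vanishes, the $\mathcal{E}xt^2$-sheaf is entirely controlled by $\mathcal{T}or(\Omega^1_{\CS/\CO_X})$ via a single $\mathcal{H}om$, so the global-sections-of-$\mathcal{E}xt^2$ hypothesis in Theorem \ref{defGtheorem} can be replaced by the more explicit and easier-to-verify hypothesis (ii) stated here.
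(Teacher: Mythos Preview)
Your proposal is correct and follows essentially the same route as the paper. The paper's one-line proof (``Using the same proposition and lemma we get a slight improvement if $\CQ = 0$'') appeals directly to Lemma \ref{defGlemma} and Proposition \ref{h30}, whereas you factor through Theorem \ref{defGtheorem}; since Theorem \ref{defGtheorem} is itself deduced from those two results, the two arguments are the same up to this cosmetic repackaging, and in both cases the key input is Proposition \ref{h30}(iii) identifying $\mathcal{E}xt^2_\CS(\Omega^1_{\CS/\CO_X},\CS)$ with $\mathcal{H}om_\CS(\mathcal{T}or(\Omega^1_{\CS/\CO_X}), \mathcal{H}^2_Z(\CS))$ when $\CQ=0$.
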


We give two examples to illustrate how the statement fails if the
assumptions are not met.

\begin{example} For smooth curves in $\PP^3$ we only need to verify
  (iv) of Theorem \ref{Q0thm} to conclude that
$\Def_S^0 \to \Def_X$ is smooth since $Z$ is empty and $(\CS \otimes
\mathfrak{g})^G = \CO_X$. We claim that a general space curve
$X = \Proj S$
in $\Hilb^{d,g}(\PP^3)$ satisfying $g < d+3$  also satisfy
(iv). Indeed in this case $J$ is the irrelevant maximal ideal $(x_0,..,x_3)$, thus
$H^2_J(S) \simeq \bigoplus_\nu H^1(X,O_X(\nu))$. We get $\Der_k(S,
H^2_J(S))_0 = 0$ provided $H^1(X,\CO_X(1))=0$. It 
is well known that a general curve with $g < d+3$ satisfies this property.

We need to show $T^1(S/k, H^1_J(S))_0 = 0$. Let $I$ be the
homogeneous ideal of $X$ in $\PP^3 = \Proj R$. Since $\Hom_R(I,H^1_J(S))_0
= T^1(S/R, H^1_J(S)) \to T^1(S/k, H^1_J(S))_0$ is surjective, it
suffices to show $\Hom(I,H^1_J(S))_0 = 0$. Space curves of maximal rank
(i.e. $H^1(\CI_X(v)) = 0$ provided $H^0(\CI_X(v)) \ne 0)$ necessarily
satisfy this property. The main theorem of Ellia and Ballico in \cite{be:max}
implies that the general curve in the range $g < d-3$ has maximal
rank. Thus the first four assumptions of the theorem are
satisfied. 

It follows that $\Def_S^0 \to
\Def_X$ is smooth in this range. But if $g > 0$ then $H^1(X, \CO_X)
\ne 0$, i.e.\ (v) does not hold  and
we do not expect  $\Def_S^0 \to \Def_X$ to be an isomorphism.
\end{example}

\begin{example} Let $X = \Spec A$ be the affine cone over
  $\mathbb{G}(2,4)$ in the Pl{\"u}cker embedding. It
  is a node in $\mathbb{A}^6$ so $\dim T^1_X = 1$. On the other hand
  $A = S^G$ where $S = k[x_{1,1}, \dots , x_{2,4}]$ and $G = \SL_2$
  corresponding to an affine $G$-quadruple $(X, \{0\}, S, (1))$. Of
  course $T^1_S = 0$. If $\mathfrak{m} \subset A$ is the ideal of
  $\{0\}$ in $X$, then the ideal
  $I_Z = \mathfrak{m}S \subset S$ is generated by the $2 \times 2$
  minors of a general 
  $2 \times 4$ matrix so $\depth_Z S = 3$. Thus conditions (i), (iii)
  and (iv) of Theorem \ref{defGtheorem} are satisfied. 

We compute $\Ext^2_S(\Omega^1_{S/A}, S)$. Let $f_{ij}$ be the minor
with columns $i$ and $j$. If $1 \le i < j 
\le 4$ write $\{k,l\} = \{1,2,3,4\} \setminus \{i,j\}$. 
Then one checks
that for each pair $(\alpha, \beta)$ with $1 \le \alpha < \beta \le 4$
$$\sum_{1 \le i < j \le 4} \epsilon_{ij}\frac{\partial f_{ij}}{x_{\alpha
    \beta}}\cdot f_{kl} = 0$$
for suitable signs $\epsilon_{ij}$. 
We may use this to construct a free $SG$-resolution 
$$0 \to S \xrightarrow{M} S^6 \to S \otimes_k V^\ast \to
\Omega^1_{S/A} \to 0$$
where the entries of $M$ generate $I_Z$ and $V$ is the vector space of $2 \times
4$ matrices. In particular  
$$\Ext^2_S(\Omega^1_{S/A}, S)^G \simeq (S/I_Z)^G = A/\mathfrak{m} = k$$
and indeed condition (ii) of Theorem \ref{defGtheorem} fails.
\end{example}

We assume now that $G$ is a \emph{quasitorus} (also called a diagonalizable group). See e.g.\  \cite[Section 1.2]{adhl:cox} for details and proofs of the statements below. We recall the definition

\begin{definition} \label{quasit} A quasitorus is an affine algebraic group $G$ whose algebra of regular functions $\Gamma(G,\CO_G)$ is generated as a $k$-vector space by the characters $\chi: G \to k^\ast$. A torus is a connected quasitorus.
\end{definition}
One proves that a quasitorus is a direct product of a torus and a finite abelian group. It is also characterized by the fact that any rational representation of $G$ splits into one-dimensional subrepresentations. In particular in our case the adjoint
action is trivial so $\CF \otimes \mathfrak{g} \simeq \CF^r$ as $\CS
G$-modules. 

\begin{lemma} \label{Qlemma} If $G$ is a quasitorus and $\mathcal{G}$
  is an $\CS G$-module 
  then there are inclusions $$\Hom_\CS(\CQ,
\mathcal{G})^G \subseteq \ \Hom_{\CO_X}(\CQ^G,
\mathcal{G}^G) \subseteq  (\mathcal{G}^G)^r \, .$$
\end{lemma}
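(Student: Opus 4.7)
The plan is to exploit the fact that for a quasitorus $G$ (which is in particular abelian), the adjoint representation on $\mathfrak{g}$ and the coadjoint action on $\mathfrak{g}^\ast$ are trivial, so $\CS\otimes\mathfrak{g}^\ast$ is isomorphic to $\CS^r$ as $\CS G$-module with the $G$-action coming only from $\CS$. In particular the standard basis elements $1\otimes\xi_i^\ast$ lie in $(\CS\otimes\mathfrak{g}^\ast)^G$, and $(\CS\otimes\mathfrak{g}^\ast)^G\simeq\CO_X^r$.

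First I would take $G$-invariants in the tail of \eqref{euseq}
$$\Omega^1_{\CS/\CO_X}\xrightarrow{E}\CS\otimes\mathfrak{g}^\ast\to\CQ\to 0.$$
Since $G$ is linearly reductive, $(-)^G$ is exact, so $\CQ^G$ is the cokernel of $E^G\colon(\Omega^1_{\CS/\CO_X})^G\to\CO_X^r$. In particular $\CQ^G$ is a quotient of $\CO_X^r$, and the images $\bar e_1,\dots,\bar e_r$ of $1\otimes\xi_i^\ast$ in $\CQ$ lie in $\CQ^G$ and generate $\CQ^G$ over $\CO_X$ as well as $\CQ$ over $\CS$.

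For the second inclusion, applying $\Hom_{\CO_X}(-,\mathcal{G}^G)$ to the surjection $\CO_X^r\twoheadrightarrow\CQ^G$ yields the injection
$$\Hom_{\CO_X}(\CQ^G,\mathcal{G}^G)\hookrightarrow\Hom_{\CO_X}(\CO_X^r,\mathcal{G}^G)=(\mathcal{G}^G)^r=r\mathcal{G}^G,$$
explicitly given by $\psi\mapsto(\psi(\bar e_i))_{i=1}^r$.

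For the first inclusion, send $\varphi\in\Hom_\CS(\CQ,\mathcal{G})^G$ to its restriction $\varphi^G\colon\CQ^G\to\mathcal{G}^G$, which is $\CO_X$-linear since $\CS^G=\CO_X$. To see that this restriction map is injective, suppose $\varphi^G=0$. Then $\varphi(\bar e_i)=0$ for every $i$, because $\bar e_i\in\CQ^G$. But $\bar e_1,\dots,\bar e_r$ generate $\CQ$ as an $\CS$-module, so $\CS$-linearity forces $\varphi=0$.

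There is no real obstacle; the only point to be careful about is noting that for a quasitorus the coadjoint action is trivial, which is what makes the generators $1\otimes\xi_i^\ast$ $G$-invariant and turns the problem into the elementary diagram-chase above.
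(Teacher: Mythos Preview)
Your proof is correct and follows essentially the same approach as the paper: the paper also observes that $\CQ$ is a quotient of $\CS^r$ (using the triviality of the adjoint action, stated just before the lemma) and hence generated by $G$-invariants, which gives injectivity of the restriction map, and obtains the second inclusion by taking invariants of \eqref{euseq} and applying $\mathcal{H}om_{\CO_X}(-,\mathcal{G}^G)$. Your version simply spells out these steps in more detail.
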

\begin{proof}
Since $G$ is  quasitorus $\CS \otimes \mathfrak{g} \simeq \CS^r$ as $\CS
G$-modules and the globally generated free sheaf $\CS^r$ is generated by invariants. Thus $\CQ$ is generated by $G$ invariants, so
$$\Hom_\CS(\CQ,
\mathcal{G})^G \to\Hom_{\CO_X}(\CQ^G,
\mathcal{G}^G)$$
 is injective. The second inclusion follows
from first taking invariants of \eqref{euseq} and then applying
$\mathcal{H}om_{\CO_X}(-, \mathcal{G}^G)$.
\end{proof}

The results become better if  the codimension of
$Z(\CQ)$ is sufficiently large. 

\begin{proposition}\label{ZQ}  If $G$ is a quasitorus and $$\depth_Z\CF
  \ge 2, \quad \depth_Z\CF^G \ge 3 \text{ and } 
\depth_{Z(\CQ)} \CF^G \ge 4$$  then there is an exact sequence
$$\ 0 \to r H^2(X, \CF^G) \to T^2(\CS/\CO_X; \CF)^G
  \to \Hom_\CS(\Omega^1_{\CS/\CO_X}, \mathcal{H}^2_Z(\CF))^G \to r
  H^3(X, \CF^G) $$ 
\end{proposition}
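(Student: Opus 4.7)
The strategy is to combine parts (i) and (ii) of Proposition \ref{h30}, exploiting the fact that for a quasitorus the adjoint action on $\mathfrak{g}$ is trivial, so $(\CF \otimes \mathfrak{g})^G \simeq (\CF^G)^r$ and hence $H^i(X, (\CF \otimes \mathfrak{g})^G) = r H^i(X, \CF^G)$. Once the hypotheses are verified, part (i) will yield the four-term sequence of the statement but with the third term replaced by $H^0(X, \mathcal{E}xt^2_\CS(\Omega^1_{\CS/\CO_X}, \CF)^G)$, and part (ii) will identify this group with $\Hom_\CS(\Omega^1_{\CS/\CO_X}, \mathcal{H}^2_Z(\CF))^G$ once the third term of its sequence is shown to vanish.

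First I would verify the hypothesis of Proposition \ref{h30}(i), namely $\mathcal{H}om_\CS(\CQ, \mathcal{H}^2_Z(\CF))^G = 0$. Applying $\mathcal{H}om_\CS(-, \mathcal{H}^2_Z(\CF))$ to the surjection $\CS \otimes \mathfrak{g}^\ast \twoheadrightarrow \CQ$ from \eqref{euseq} and then taking $G$-invariants (exact by linear reductivity) gives the sheaf version of Lemma \ref{Qlemma}, embedding $\mathcal{H}om_\CS(\CQ, \mathcal{H}^2_Z(\CF))^G$ into $r \mathcal{H}^2_Z(\CF)^G = r \mathcal{H}^2_Z(\CF^G)$ (invariants commuting with local cohomology). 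The assumption $\depth_Z \CF^G \ge 3$ kills $\mathcal{H}^2_Z(\CF^G)$. The additional depth condition needed for part (ii), namely $\depth_Z (\CF \otimes \mathfrak{g})^G \ge 3$, is again just $\depth_Z \CF^G \ge 3$.

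The remaining point is to prove $\Hom_\CS(\CQ, \mathcal{H}^3_Z(\CF))^G = 0$, which promotes the three-term sequence of \ref{h30}(ii) to an isomorphism. The same argument as above yields an embedding $\mathcal{H}om_\CS(\CQ, \mathcal{H}^3_Z(\CF))^G \hookrightarrow r \mathcal{H}^3_Z(\CF^G)$; since the left-hand side is supported on $Z(\CQ) \subseteq Z$ (by \ref{zqpos}), the image is contained in $r \mathcal{H}^0_{Z(\CQ)}(\mathcal{H}^3_Z(\CF^G))$. The main technical --- and in fact the only nonroutine --- step is to show this last sheaf vanishes, which I would do by invoking the composed-functor spectral sequence
\begin{equation*}
E_2^{p,q} = \mathcal{H}^p_{Z(\CQ)}(\mathcal{H}^q_Z(\CF^G)) \Rightarrow \mathcal{H}^{p+q}_{Z(\CQ)}(\CF^G),
\end{equation*}
valid because $Z(\CQ) \subseteq Z$. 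The condition $\depth_Z \CF^G \ge 3$ makes $E_2^{p,q} = 0$ for all $q \le 2$, so every differential entering or leaving $E_r^{0,3}$ (for $r \ge 2$) has zero source or target and $E_\infty^{0,3} = E_2^{0,3}$. Since $\depth_{Z(\CQ)} \CF^G \ge 4$ forces the total degree $3$ abutment to vanish while the only surviving $E_\infty^{p, 3-p}$ contribution is at $p = 0$, we conclude $E_2^{0,3} = \mathcal{H}^0_{Z(\CQ)}(\mathcal{H}^3_Z(\CF^G)) = 0$. Taking global sections of the resulting isomorphism and splicing into the sequence from \ref{h30}(i) produces the stated exact sequence.
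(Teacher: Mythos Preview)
Your overall strategy matches the paper's: verify the hypotheses of Proposition~\ref{h30}(i) and (ii) using the quasitorus identification $(\CF\otimes\mathfrak g)^G\simeq(\CF^G)^r$, and then kill the term $\Hom_\CS(\CQ,\mathcal H^3_Z(\CF))^G$ to turn the sequence in (ii) into an isomorphism. The difference lies in how you carry out this last vanishing.

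You embed $\mathcal Hom_\CS(\CQ,\mathcal H^3_Z(\CF))^G$ into $r\,\mathcal H^3_Z(\CF^G)$, observe that the image is supported on $Z(\CQ)\subseteq Z$, and hence lands in $\mathcal H^0_{Z(\CQ)}(\mathcal H^3_Z(\CF^G))$; you then kill this via the Grothendieck spectral sequence for the composite $\Gamma_{Z(\CQ)}=\Gamma_{Z(\CQ)}\circ\Gamma_Z$. The paper instead stops the embedding one step earlier, at $\mathcal Hom_{\CO_X}(\CQ^G,\mathcal H^3_Z(\CF^G))$ (this is Lemma~\ref{Qlemma}), and then proves a separate module-theoretic lemma: for ideals $I\subseteq J$ with $\depth_I M\ge d$, $\depth_J M\ge d{+}1$, and $\supp Q\subseteq V(J)$, one has $\Hom_A(Q,H^d_I(M))=0$. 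That lemma is established by comparing the two spectral sequences $H^p_I(\Ext^q_A(Q,M))$ and $\Ext^p_A(Q,H^q_I(M))$, both abutting to $\Ext^{p+q}_{A,I}(Q,M)$.

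Your route is a bit more elementary in that it only involves local cohomology and avoids the $\Ext$-with-support machinery; the paper's route, on the other hand, isolates a clean reusable statement that applies to an arbitrary module $Q$ with the right support, not just one arising as a quotient of a trivial bundle. Both arguments are correct and rest on the same two depth hypotheses together with $Z(\CQ)\subseteq Z$.
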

\begin{proof} From 
Proposition \ref{h30} and Lemma \ref{Qlemma}  it is enough to
prove that $$\mathcal{H}om_{\CO_X}(\CQ^G, 
\mathcal{H}^3_Z(\CF^G)) = 0$$ but this follows from the assumptions and
the following lemma.
\end{proof}

\begin{lemma} Let $A$ be a noetherian ring, $M$ a finitely generated
  $A$-module, $I \subseteq J$ two ideals of $A$ and suppose $\depth_I
  M \ge d$ and $\depth_J M \ge d+1$. If $Q$ is an $A$-module with
  support contained in $V(J)$ then $\Hom_A(Q, H^d_I(M)) = 0$.
\end{lemma}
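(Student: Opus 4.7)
The plan is to reduce the Hom-vanishing to a local cohomology computation and then to exploit the Grothendieck spectral sequence for the composition $\Gamma_J \circ \Gamma_I$.

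First I would observe that since $\supp Q \subseteq V(J)$, every $\varphi \in \Hom_A(Q, H^d_I(M))$ has image killed by some power of $J$ on each generator, hence lands in the submodule $\Gamma_J\bigl(H^d_I(M)\bigr) = H^0_J\bigl(H^d_I(M)\bigr)$. Thus the statement reduces to showing $H^0_J\bigl(H^d_I(M)\bigr) = 0$.

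Because $I \subseteq J$ we have $V(J) \subseteq V(I)$, so $\Gamma_J \circ \Gamma_I = \Gamma_J$. The Grothendieck spectral sequence for this composition yields
\begin{equation*}
E_2^{p,q} = H^p_J\bigl(H^q_I(M)\bigr) \Longrightarrow H^{p+q}_J(M).
\end{equation*}
The hypothesis $\depth_I M \ge d$ gives $H^q_I(M) = 0$ for all $q < d$, so $E_2^{p,q} = 0$ whenever $q < d$. Consequently every differential into or out of $E_r^{0,d}$ lands in a vanishing term (for $r \ge 2$ the target $E_r^{r,d-r+1}$ sits in the zero strip $q \le d-1$, and the source $E_r^{-r,d+r-1}$ vanishes for $p < 0$). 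Hence $E_\infty^{0,d} = E_2^{0,d} = H^0_J\bigl(H^d_I(M)\bigr)$.

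Finally, since the entire strip $q < d$ is zero, the induced filtration on $H^d_J(M)$ has $E_\infty^{0,d}$ as its only possibly nonzero graded piece, so $H^d_J(M) \simeq H^0_J\bigl(H^d_I(M)\bigr)$. The assumption $\depth_J M \ge d+1$ forces $H^d_J(M) = 0$, and the desired vanishing follows. The only subtle point is the bookkeeping of incoming and outgoing differentials at $E_r^{0,d}$, but the vanishing of the whole strip $q < d$ makes this immediate, so no real obstacle remains.
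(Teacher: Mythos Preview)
Your proof is correct but follows a genuinely different route from the paper's. You first strip $Q$ out of the problem by observing that any homomorphism from a $J$-torsion module lands in $\Gamma_J(H^d_I(M))$, and then kill $H^0_J(H^d_I(M))$ via the single Grothendieck spectral sequence for the composite $\Gamma_J\circ\Gamma_I=\Gamma_J$. The paper instead keeps $Q$ in play and compares the two spectral sequences
\[
E_2^{p,q}=H^p_I(\Ext_A^q(Q,M))\quad\text{and}\quad{}'E_2^{p,q}=\Ext_A^p(Q,H^q_I(M)),
\]
both abutting to $\Ext^{p+q}_{A,I}(Q,M)$: the second identifies $\Hom_A(Q,H^d_I(M))$ with $\Ext^d_{A,I}(Q,M)$ (using $\depth_I M\ge d$), and the first shows this group vanishes because $\depth_J M\ge d+1$ forces $\Ext^q_A(Q,M)=0$ for $q\le d$. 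Your argument is lighter in that it uses only one spectral sequence and avoids the $\Ext$-with-support formalism; the paper's approach has the minor advantage that the identification $\Hom_A(Q,H^d_I(M))\simeq\Ext^d_{A,I}(Q,M)$ is a reusable fact.
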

\begin{proof} Consider the two spectral sequences 
$$E_2^{p,q} =
  H^p_I(\Ext_A^q(Q,M)) \quad \text{and} \quad ^{\prime}E_2^{p,q} =
  \Ext_A^p(Q, H^q_I(M))$$ 
which both converge to $\Ext^{p+q}_{A,I}(Q,M)$. We have $H^q_I(M)=0$
for $q < d$ so the second spectral sequence yields $\Hom_A(Q,
H^d_I(M)) \simeq \Ext_{A,I}^d(Q,M)$. On the other hand since $\depth_J M \ge
d+1$ we have $\Ext^q_A(Q,M) = 0$ for $q < d+1$. Thus by the first
spectral sequence $\Ext_{A,I}^d(Q,M) = 0$, cf. \cite[Exp. VII]{gr:sga2}, Lemma 1.1.
\end{proof}

\begin{theorem}\label{cor1} Assume $G$ is a quasitorus and let
  $(X,Z,S,J)$ be a $G$-quadruple with associated sheaf of algebras $\CS$ and assume $\depth_Z\CS \ge 2$. If
\begin{itemize} 
\item[\textup{(i)}] $\depth_Z\CO_X \ge 3$ and  $\depth_{Z(\CQ)}\CO_X \ge 4$
\item[\textup{(ii)}] $\Hom_\CS(\Omega^1_{\CS/\CO_X},
  \mathcal{H}^2_Z(\CS))^G = 0$
\item[\textup{(iii)}] $G$ is finite or $H^2(X, \CO_X) = 0$
\end{itemize} 
and \textup{(iv)} of Theorem \ref{defGtheorem}
hold, then $\Def^G_{S} \to \Def_{X}$ is smooth. 
If moreover
\begin{itemize}
\item[\textup{(v)}] $G$ is finite or $H^1(X, \CO_X) = 0$
\end{itemize}
and \textup{(vi)} of Theorem \ref{defGtheorem} hold
then  $\Def^G_{S}
\to \Def_{X}$  is an isomorphism.  
\end{theorem}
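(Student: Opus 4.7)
The plan is to deduce Theorem \ref{cor1} from Theorem \ref{defGtheorem} by translating each of its conditions into the quasitorus setting and then showing that the hypotheses listed in Theorem \ref{cor1} are sufficient. The central simplification is that when $G$ is a quasitorus the adjoint action is trivial, so $\CS \otimes \mathfrak{g} \simeq \CS^r$ as $\CS G$-modules with $r = \dim G$, and taking invariants gives $(\CS \otimes \mathfrak{g})^G \simeq \CO_X^r$. Hence conditions (iii) and (v) of Theorem \ref{defGtheorem} become $H^i(X,\CO_X)^r = 0$ for $i=2,1$, which follow at once from (iii) (respectively (v)) of Theorem \ref{cor1} — either $r = 0$ because $G$ is finite, or the cohomology itself vanishes. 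Conditions (iv) and (vi) of Theorem \ref{defGtheorem} agree verbatim with (iv) and (vi) of Theorem \ref{cor1}, so no work is needed there.

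Next I would handle condition (i) of Theorem \ref{defGtheorem}. By Lemma \ref{Qlemma},
\[
\Hom_\CS(\CQ, \mathcal{H}^2_Z(\CS))^G \subseteq r\,\mathcal{H}^2_Z(\CS)^G = r\,\mathcal{H}^2_Z(\CO_X),
\]
and the hypothesis $\depth_Z \CO_X \ge 3$ makes the right-hand side vanish. This gives (i) of Theorem \ref{defGtheorem} and, as a by-product, allows Proposition \ref{h30} to apply.

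The main remaining task is condition (ii) of Theorem \ref{defGtheorem}, and this is the step I expect to require the most care. I would apply Proposition \ref{h30}(ii) with $\CF = \CS$; its hypothesis $\depth_Z (\CS \otimes \mathfrak{g})^G \ge 3$ is exactly $\depth_Z \CO_X \ge 3$, so we obtain the exact sequence
\[
0 \to \Hom_\CS(\Omega^1_{\CS/\CO_X}, \mathcal{H}^2_Z(\CS))^G \to H^0\bigl(X, \mathcal{E}xt^2_\CS(\Omega^1_{\CS/\CO_X},\CS)^G\bigr) \to \Hom_\CS(\CQ, \mathcal{H}^3_Z(\CS))^G.
\]
The leftmost term is precisely (ii) of Theorem \ref{cor1}, so it vanishes. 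For the rightmost term, use Lemma \ref{Qlemma} to inject it into $\Hom_{\CO_X}(\CQ^G, \mathcal{H}^3_Z(\CO_X))$, and then invoke the auxiliary lemma proved inside Proposition \ref{ZQ} with $M = \CO_X$, $I$ the defining ideal of $Z$, $J$ the defining ideal of $Z(\CQ)$, and $d = 3$: the assumptions $\depth_Z \CO_X \ge 3$ and $\depth_{Z(\CQ)} \CO_X \ge 4$ from (i) of Theorem \ref{cor1} are exactly what the lemma requires, forcing $\Hom_{\CO_X}(\CQ^G, \mathcal{H}^3_Z(\CO_X)) = 0$.

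Putting all four (respectively six) ingredients together verifies the hypotheses of Theorem \ref{defGtheorem}, and the smoothness (respectively isomorphism) statement of Theorem \ref{cor1} follows directly. The subtle point is really the rightmost term of the displayed sequence above, where one has to use both depth conditions in (i) simultaneously, together with the fact that $\CQ$ is supported on $Z(\CQ) \subseteq Z$; everything else is bookkeeping enabled by the quasitorus hypothesis.
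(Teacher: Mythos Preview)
Your proposal is correct and follows essentially the same route as the paper. The only organizational difference is that you verify the hypotheses of Theorem \ref{defGtheorem} one by one---in particular establishing its condition (ii) via Proposition \ref{h30}(ii), Lemma \ref{Qlemma}, and the auxiliary depth lemma---whereas the paper bypasses Theorem \ref{defGtheorem} and applies Lemma \ref{defGlemma} directly together with Proposition \ref{ZQ}, which already packages that same computation; the underlying ingredients are identical.
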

\begin{proof} Apply Lemma \ref{defGlemma}, Proposition \ref{h30} and
  Proposition \ref{ZQ}. \end{proof}

If $S$ is regular then
we get a statement about the rigidity of $X$ and more generally about
rigidity of $X$ along a sheaf $\CF$, i.e.\ when $T^1_X(\CF)=0$. Note
that if $(X, Z , S, 
\bar{J})$ is a $G$-subquadruple of  $(Y,W,R,J)$ as defined in
Definition   \ref{subferrydef} then
the vanishing of $T^1_Y(f_\ast\CO_X)$ implies that the forgetful map
$\Hilb_{X/Y} \to \Def_X$ is smooth. This is often  useful for proving
unobstructedness. 

\begin{corollary}\label{cor2} Assume $G$ is a quasitorus and let
  $(X,Z,S,J)$ be a $G$-quadruple 
  with $S$ a regular ring, $M$ a finitely generated $SG$-module and
  $\CF = \pi_\ast(\widetilde{M}_{|X^\prime})$. If
\begin{list}{\textup{(\roman{temp})}}{\usecounter{temp}}
\item $\depth_Z \CF \ge 2$,  $\depth_Z \CF^G \ge 3$ and
  $\depth_{Z(\CQ)} \CF^G \ge 4$
\item $G$ is finite or $H^2(X, \CF^G) = 0$
\item $\Hom_\CS(\Omega^1_{\CS/\CO_X}, \mathcal{H}^2_Z(\CF))^G = 0$
\item $\Der_k(S, H^2_J(M))^G = 0$
\end{list} then   $T^1_X(\CF) = 0$. In particular if the conditions
hold for $M=S$ then $X$  is rigid.
\end{corollary}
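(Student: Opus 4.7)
The plan is to assemble Lemma \ref{Mlemma}, Proposition \ref{ZQ}, Lemma \ref{lclemma}, and the regularity of $S$. Concretely, I want a surjection $T^1_S(M)^G \twoheadrightarrow T^1_X(\CF^G)$ whose source vanishes for trivial reasons once we know $S$ is regular. (Note that the ``In particular'' sentence, taking $M=S$, reads $\CF^G = \CS^G = \CO_X$, so ``$T^1_X(\CF)=0$'' there says $T^1_X=0$, i.e.\ $X$ is rigid.)

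First I would reduce to showing $T^1_S(M)^G=0$ together with the two vanishings needed to invoke Lemma \ref{Mlemma}, namely $T^2(\CS/\CO_X;\CF)^G=0$ and $T^2_J(S/k;M)^G=0$. For the first vanishing, assumption (i) supplies exactly the depth hypotheses of Proposition \ref{ZQ}, which produces the four-term exact sequence
\begin{equation*}
0 \to r\,H^2(X,\CF^G) \to T^2(\CS/\CO_X;\CF)^G \to \Hom_\CS(\Omega^1_{\CS/\CO_X},\mathcal{H}^2_Z(\CF))^G \to r\,H^3(X,\CF^G).
\end{equation*}
Assumption (iii) kills the middle-right term, while assumption (ii) handles the outer terms: if $G$ is finite then $r=\dim G=0$ and they vanish automatically, and otherwise $H^2(X,\CF^G)=0$ handles the left term and the map to $r H^3(X,\CF^G)$ is irrelevant once the middle map has image zero. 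This gives $T^2(\CS/\CO_X;\CF)^G=0$.

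Next, since $S$ is regular, Lemma \ref{lclemma} provides $T^2_J(S/k;M)\simeq \Der_k(S,H^2_J(M))$ as $SG$-modules, and taking $G$-invariants kills this by (iv). Hence both hypotheses of Lemma \ref{Mlemma} hold, and we obtain the desired surjection $T^1_S(M)^G \twoheadrightarrow T^1_X(\CF^G)$.

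The final step is the engine of the proof: since $S$ is a regular (hence smooth, as $k$ is algebraically closed) finitely generated $k$-algebra, the cotangent complex $\mathbb{L}_\bullet^{S/k}$ is quasi-isomorphic to the projective module $\Omega^1_{S/k}$ concentrated in degree $0$, so $T^i(S/k;N)=\Ext^i_S(\Omega^1_{S/k},N)=0$ for every $S$-module $N$ and every $i\ge 1$. In particular $T^1_S(M)^G=0$, and therefore $T^1_X(\CF^G)=0$ as required; specializing $M=S$ gives rigidity of $X$. I do not expect a serious obstacle: all hypotheses have been tailored to feed directly into Proposition \ref{ZQ} and Lemma \ref{Mlemma}, and the only nontrivial input beyond those is the vanishing of cotangent cohomology in positive degrees for regular $k$-algebras, which is standard.
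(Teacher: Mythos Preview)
Your proof is correct and follows exactly the route the paper takes: the paper's proof is the single line ``This follows from Lemma \ref{Mlemma}, Lemma \ref{lclemma} and Proposition \ref{ZQ},'' and you have simply unpacked how those three results combine. You also correctly identify that what actually comes out is $T^1_X(\CF^G)=0$ (which is what Lemma \ref{Mlemma} delivers and what is needed for the rigidity statement when $M=S$), even though the printed statement writes $T^1_X(\CF)$.
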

\begin{proof} This follows from Lemma \ref{Mlemma},  Lemma
  \ref{lclemma} and Proposition 
  \ref{ZQ}. \end{proof} 

\begin{remark} By the Hochster-Roberts theorem $X$ will be
  Cohen-Macaulay and equidimensional if  $S$ is a
regular ring so we may exchange depth with codimension if $M = S$. 
\end{remark}

\begin{example}  \emph{Weighted projective space.} Consider now $X =
  \PP(\mathbf{q}) = \PP(q_0, \dots , q_n)$ as described in Example
  \ref{wps}. We use Corollary \ref{cor2} to prove that \emph{if no
$n-1$ of the $q_0, \dots , q_n$ have a common factor then
$\PP(\mathbf{q})$ is rigid}. 

The subscheme $Z$ is the singular locus of $X$ and the condition means that
$\codim Z \ge 3$. 
Let  $S= k[x_0, \dots ,x_n]$ with $n \ge 2$. We have
$$J^\prime = \bigcap_{\substack{p \text{ prime}
\\ p \mid \lcm(q_0, \dots , q_n)}} (x_i : p \nmid q_i)$$ 
so in fact $\codim J^\prime \ge 3$.
The sheaf $\CQ$ is trivial since
the isotropy is  finite everywhere. This takes care of
condition (i). 

The cohomology $H^2(X, \CO_X) = 0$ and since $J = (x_0, \dots , x_n)$
clearly $H^2_J(S)=0$. We are left with (iii) but as in Example
\ref{fG} since locally the quotient is by a finite group on a smooth
space, $\depth_Z \CS = \codim Z$ and $\mathcal{H}^2_Z(\CS) = 0$.
\end{example}

\section{Applications to toric varieties} \label{tvsec} We collect
here some results 
for toric varieties $X_\Sigma$ over $\CC$ that illustrate the various
aspects of our comparison theorems. We consider the $G$-quadruple
$(X_\Sigma,\Sing(X_\Sigma),S,B(\Sigma))$ described in Example
\ref{TVex} and will use the notation defined there. Note that toric
varieties are Cohen Macaulay and $S$ is a
polynomial ring. Moreover the condition $\depth_Z \CS \ge 2$ is always
satisfied since $U^\prime = \Spec S \setminus Z(\Sigma^\prime)$ where
$\Sigma^\prime$ is the fan of smooth cones in $\Sigma$ and $\codim
Z(\Sigma^\prime) \ge 2$.

\subsection{Subschemes of  simplicial toric varieties}
As explained before Corollary \ref{TVcor} a subscheme of a toric variety yields a $G$-subquadruple induced by a homogeneous $I$ in
the Cox ring $R$ of the toric variety $Y=Y_\Sigma$. Let $X
\subset Y$ be the subscheme. We have $Z = \Sing(Y) \cap
X$, $S = R/I$ and $J= 
(I+B(\Sigma))/I$. 

We  assume for simplicity that $Y$ is 
simplicial and all maximal cones have dimension $d$ and that $X$ is
Cohen-Macaulay and equidimensional. This means 
that the quotient $$\pi: Y^\prime = \Spec R \setminus V(B(\Sigma)) \to
Y$$ is a geometric quotient with all isotropy finite. In particular
$\CQ = 0$. 
Moreover locally on a chart $U_\sigma \subset Y$, corresponding to a
maximal cone in $\Sigma$, the quotient map sits
in a commutative diagram
\begin{equation*}
\begin{tikzcd} \CC^d \arrow{d}
\arrow[hookrightarrow]{r} & \CC^d \times (\CC^\ast)^{r}
\arrow{d}{\pi} \\ \CC^d/G_\sigma \arrow{r}{\simeq} &
U_\sigma 
\end{tikzcd}
\end{equation*} 
where $G_\sigma$ is finite abelian. See e.g.\ the proof of
\cite[Theorem 1.9]{bc:ho}. If $Z$ is closed in $X \subseteq Y$ then $\codim
\pi^{-1}(Z) = \codim Z$.

Since $\CQ = 0$ we may apply Theorem \ref{Q0thm}, but we still
need to ensure that $$\Hom_\CS(\mathcal{T}or(\Omega^1_{\CS/\CO_X}),
\mathcal{H}^2_Z(\CS))^G = 0 \, .$$
Because  $\codim \pi^{-1}(Z) = \codim Z$ this will be the case  if
$\codim Z \ge 3$. 
\begin{theorem}\label{THthm} If $X$
is an equidimensional  Cohen-Macaulay subscheme of a simplicial toric
variety $Y$ and 
\begin{list}{\textup{(\roman{temp})}}{\usecounter{temp}}
\item $\codim Z \ge 3$
\item $H^2(X, \CO_X) = 0$
\item $T^1(S/k; H^1_J(S))_0 = 0$ and $\Der_k(S, H^2_J(S))_0 = 0$
\end{list} then $\Def^0_{S} \to \Def_{X}$ is smooth. If moreover
\begin{list}{\textup{(\roman{temp})}}{\usecounter{temp}}
\item $H^1(X, \CO_X) = 0$
\item $\Der_k(S, H^1_J(S))_0= 0$
\end{list} then $\Def^G_{S}
\to \Def_{X}$  is an isomorphism.  
\end{theorem}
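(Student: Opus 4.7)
The plan is to invoke Theorem \ref{Q0thm} directly for the $G$-subquadruple $(X,Z,S,\bar{J})$ of the ambient toric $G$-quadruple $(Y,\Sing(Y),R,B(\Sigma))$, with $G$ the quasi-torus of the Cox construction. The condition $\depth_Z \CS \ge 2$ required by that theorem is already recorded in the paragraph preceding the statement. Since $G$ is a quasi-torus with trivial adjoint action, $(\CS \otimes \mathfrak{g})^G \simeq \CO_X^{\oplus r}$, and $G$-invariance coincides with taking the degree $0$ part of the $\Cl(Y)$-grading. Under this dictionary, the hypotheses (ii), (iii), (iv), (v) of the present theorem translate one-to-one into the hypotheses (iii), (iv), (v), (vi) of Theorem \ref{defGtheorem} that Theorem \ref{Q0thm} inherits, so these are immediate.

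The work therefore reduces to verifying the two specific hypotheses (i) and (ii) of Theorem \ref{Q0thm}. For (i), since $Y$ is simplicial, the $G$-action on $Y^\prime = \Spec R \setminus V(B(\Sigma))$ has only finite isotropy everywhere, and Proposition \ref{zqpos} then gives $Z(\CQ) = \emptyset$, so $\CQ = 0$.

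The key step, and the main obstacle, is the Hom-vanishing in (ii). I would prove the stronger statement $\mathcal{H}^2_Z(\CS) = 0$ by establishing $\depth_Z \CS \ge 3$. On each affine chart $U_\sigma$ corresponding to a maximal cone, the Cartesian diagram displayed just before the theorem realises $\pi$ locally as a finite abelian quotient of the smooth scheme $\CC^d \times (\CC^\ast)^r$, whence $\codim_{X^\prime} \pi^{-1}(Z) = \codim_X Z \ge 3$ by hypothesis (i). Since $X$ is Cohen-Macaulay and equidimensional the same holds locally for $X^\prime$, so $\depth_{\pi^{-1}(Z)} \CO_{X^\prime} = \codim_{X^\prime} \pi^{-1}(Z) \ge 3$; because $\pi$ is affine this transfers via $\mathcal{H}^i_{\pi^{-1}(Z)}(\CO_{X^\prime}) \simeq \mathcal{H}^i_Z(\CS)$ to $\depth_Z \CS \ge 3$, killing $\mathcal{H}^2_Z(\CS)$ and hence the Hom in question. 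The delicate point here is the clean codimension identity $\codim \pi^{-1}(Z) = \codim Z$, which is exactly what the simplicial hypothesis on $Y$ buys us; without simpliciality one would instead have to argue via $\CQ$ and its support.

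With both conditions of Theorem \ref{Q0thm} verified, conditions (i)--(iii) of the present theorem yield smoothness of $\Def^0_S \to \Def_X$, and the additional hypotheses (iv)--(v) supply the remaining two inputs of Theorem \ref{defGtheorem} needed to upgrade smoothness to an isomorphism.
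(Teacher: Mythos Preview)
Your proposal is correct and follows essentially the same route as the paper: apply Theorem~\ref{Q0thm}, use simpliciality to get $\CQ=0$, and use $\codim Z\ge 3$ together with the Cohen--Macaulay hypothesis and the codimension identity $\codim\pi^{-1}(Z)=\codim Z$ to kill $\mathcal{H}^2_Z(\CS)$ and hence condition~(ii). One small correction: the sentence ``$\depth_Z\CS\ge 2$ is already recorded in the paragraph preceding the statement'' is not quite accurate---that remark at the start of Section~\ref{tvsec} concerns the ambient toric variety, not the subscheme $X$; however your own argument for $\depth_Z\CS\ge 3$ covers this, so there is no gap.
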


We give now two often studied situations where the theorem
applies. First, let $X = \Proj S$ where  $S$ is a finitely generated
$\ZZ_{+}$ graded algebra as in Example \ref{wps}. 

\begin{corollary} \label{wpscor2} Let $X = \Proj S$ be an equidimensional
  Cohen-Macaulay subscheme of a well formed 
  weighted projective space $\PP(q_0, \dots , q_n)$ defined by the
  homogeneous ideal $I$. Let $\mathfrak{m}$ be the irrelevant maximal ideal of
  $S$ and $Z$ the intersection of the singular 
  locus of $\PP(\mathbf{q})$ with $X$. Assume  $\codim Z \ge 3$ and
  $\depth_{\mathfrak{m}} S \ge 2$. If
$H^2(X, \CO_X) = 0$ and $H^1(X, \CO_X(q_i)) = 0$ for all $i= 0, \dots
,n$ then $\Def^0_{S} \to \Def_{X}$ is smooth. If moreover
$H^1(X, \CO_X) = 0$ then  $\Def^G_{S}
\to \Def_{X}$  is an isomorphism.  
\end{corollary}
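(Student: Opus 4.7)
The plan is to derive this from Theorem \ref{THthm} applied with $Y = \PP(\mathbf{q})$. First, observe that $\PP(\mathbf{q})$ is a simplicial toric variety with Cox ring $R = k[x_0,\dots,x_n]$ graded by $\deg x_i = q_i$, and its irrelevant ideal is $B(\Sigma) = \mathfrak{m}_R = (x_0,\dots,x_n)$: the maximal cones of its fan each omit exactly one ray, so each defining monomial $\prod_{\rho_j \not\subset \sigma_i} x_j$ is a single variable $x_i$. Consequently $J = (I + B(\Sigma))/I = \mathfrak{m}_S$, and hypotheses (i), (ii), (iv) of Theorem \ref{THthm} translate verbatim into the assumptions of the corollary on $\codim Z$, $H^2(X,\CO_X)$ and $H^1(X,\CO_X)$. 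The remaining work is the local-cohomology conditions (iii) and (v).

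From $\depth_{\mathfrak{m}} S \ge 2$ we have $H^0_{\mathfrak{m}}(S) = H^1_{\mathfrak{m}}(S) = 0$, which immediately kills $T^1(S/k;H^1_{\mathfrak{m}}(S))_0$ and $\Der_k(S,H^1_{\mathfrak{m}}(S))_0$. Thus the single nontrivial verification is
$$\Der_k\bigl(S,\,H^2_{\mathfrak{m}}(S)\bigr)_0 \;=\; 0.$$
For this I would use the Jacobi--Zariski sequence for $k \to R \to S$. Since $R$ is a polynomial ring, $T^1(R/k,-) = 0$, and since $R \twoheadrightarrow S$, $\Der_R(S,-) = 0$; the $T^0$ portion of the sequence is therefore an injection
$$\Der_k\bigl(S, H^2_{\mathfrak{m}}(S)\bigr) \;\hookrightarrow\; \Der_k\bigl(R, H^2_{\mathfrak{m}}(S)\bigr).$$
A graded $k$-derivation $R \to M$ of degree $0$ is determined by the values $d(x_i) \in M_{q_i}$, so taking degree $0$ components yields
$$\Der_k\bigl(R, H^2_{\mathfrak{m}}(S)\bigr)_0 \;=\; \bigoplus_{i=0}^{n} H^2_{\mathfrak{m}}(S)_{q_i}.$$

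The final ingredient is the identification $H^2_{\mathfrak{m}}(S)_\nu \cong H^1(X, \CO_X(\nu))$, which holds for every $\nu$ under the standing depth hypothesis: it follows from the usual local-cohomology/sheaf-cohomology comparison (in the weighted Proj setting this is \cite[Proposition 2.3]{ems:lo}), using $\widetilde S(\nu) = \CO_X(\nu)$ and the vanishing $H^0_{\mathfrak{m}}(S) = H^1_{\mathfrak{m}}(S) = 0$ to start the comparison in the right degrees. Thus $H^1(X,\CO_X(q_i)) = 0$ for every $i$ forces each summand above to vanish, completing (iii). The main delicate point is precisely this last identification: one has to be careful that the standard graded/sheaf cohomology dictionary continues to work with the non-standard grading $\deg x_i = q_i$, and with $S$ a quotient rather than a polynomial ring. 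Once this is in place, all conditions of Theorem \ref{THthm} are met and the two statements about smoothness and isomorphism follow directly.
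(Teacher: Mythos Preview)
Your argument is correct and is exactly the intended one: the paper states Corollary \ref{wpscor2} immediately after Theorem \ref{THthm} as a direct specialization to $Y=\PP(\mathbf{q})$, and your verification of the local-cohomology conditions (via $\depth_{\mathfrak m}S\ge 2$, the Zariski--Jacobi injection $\Der_k(S,-)\hookrightarrow\Der_k(R,-)$, and the identification $H^2_{\mathfrak m}(S)_\nu\cong H^1(X,\CO_X(\nu))$) is precisely the missing detail. The only quibble is bookkeeping: in the paper the second list in Theorem \ref{THthm} restarts at (i), so your labels ``(iv)'' and ``(v)'' do not match the text, but the content is right.
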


Secondly let $X$ be a Calabi-Yau hypersurface in a simplicial Gorenstein
Fano toric variety $Y$. (See the survey book \cite{ck:mir} or the original
paper by Batyrev \cite{ba:dua} for details.) Thus $Y$ is given by the
normal fan of a simple reflexive polytope and $X$ is a divisor in  the
class of $-K_Y$ and therefore ample and Cartier. In particular
$\omega_X = \CO_X$ and $H^i(X, \CO_X)= 0$ for $i \ge 1$. We refer to
it as a Calabi-Yau hypersurface even though it may be highly singular. 

Let $D_i$ be the divisors corresponding to the rays $\rho_i$. The
hypersurface $X$ is defined by some $f \in R_\beta$ where $\beta =
\sum D_i$. For our results it is not necessary to assume any
generality for $f$. We will need the following lemma and are grateful
to Benjamin Nill for supplying the proof. (We use standard toric
geometry notation as found in e.g.\  \cite{cls:tor}.)

\begin{lemma}\label{ben} Let $P$ be a simple reflexive lattice
  polytope with inward normal fan $\Sigma$ and let $Y$ be the
  corresponding simplicial Gorenstein Fano toric variety. For every
  ray $\rho_i$ in $\Sigma$ the $\QQ$-Cartier divisor $E_i = \sum_{j
    \ne i} D_j = -K_Y - D_i$ is nef and big. 
\end{lemma}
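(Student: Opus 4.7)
The plan is to verify nefness via the standard polytopal criterion and then derive bigness from positivity of volume. Since $Y$ is simplicial, every Weil divisor is $\QQ$-Cartier, and a $\QQ$-Cartier divisor $D=\sum a_j D_j$ on a complete simplicial toric variety is nef iff for every maximal cone $\sigma$ the unique $m_\sigma^D\in M_{\QQ}$ with $\langle m_\sigma^D,v_j\rangle=-a_j$ for all $v_j\in\sigma$ also satisfies $\langle m_\sigma^D,v_k\rangle\ge -a_k$ for every other ray $v_k$. I apply this to $E_i$, where $a_j=1$ for $j\ne i$ and $a_i=0$.

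The input from reflexivity is the following: if $m$ is a vertex of $P$ and $v$ is any ray of $\Sigma$, then $\langle m,v\rangle$ is an integer at least $-1$, with equality iff $m$ lies on the facet $F_v$ of $P$ corresponding to $v$. Since $P$ is simple, $m$ lies on exactly $d$ facets, namely those coming from the $d$ rays of the unique maximal cone of $\Sigma$ associated to $m$. Consequently $\langle m,v\rangle\ge 0$ whenever $v$ is not a ray of that cone.

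Given this, the verification of nefness splits into two cases. If $v_i\notin\sigma$, then $m_\sigma^{E_i}$ equals the usual vertex $m_\sigma$ of $P$ corresponding to $\sigma$, so $\langle m_\sigma,v_i\rangle\ge 0$ by the observation above, while $\langle m_\sigma,v_k\rangle\ge -1$ for the remaining rays since $m_\sigma\in P$. If $v_i\in\sigma$, let $u_i\in M_{\QQ}$ be the element dual to $v_i$ within the simplicial cone $\sigma$; then $m_\sigma^{E_i}=m_\sigma+u_i$. The edge of $P$ issuing from $m_\sigma$ in the $u_i$ direction ends at the vertex $m_{\sigma'}$ corresponding to the cone $\sigma'$ adjacent to $\sigma$ across the wall $\tau=\sigma\setminus\{v_i\}$, and the displacement satisfies $m_{\sigma'}-m_\sigma=(\langle m_{\sigma'},v_i\rangle+1)\,u_i$. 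Because $v_i\notin\sigma'$ the observation gives $\langle m_{\sigma'},v_i\rangle\ge 0$, so this edge has $u_i$-length at least $1$; hence $m_\sigma+u_i$ lies on the edge and in particular inside $P$, which yields $\langle m_\sigma^{E_i},v_k\rangle\ge -1$ for all $k\ne i$.

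For bigness, a nef divisor is big iff its top self-intersection is positive, and the toric volume formula gives $E_i^d=d!\,\mathrm{vol}(P_{E_i})$, where $P_{E_i}=P\cap\{m:\langle m,v_i\rangle\ge 0\}$. Since $0\in\inter(P)$ lies on the bounding hyperplane of the half-space, $P_{E_i}$ contains a half-ball around $0$ and is therefore $d$-dimensional with positive volume. The delicate step is the second nefness case: showing that translating $m_\sigma$ by the full dual vector $u_i$ keeps us inside $P$. This is precisely the content of the key observation applied to the adjacent vertex $m_{\sigma'}$, and it depends crucially on the integrality of $\langle m_{\sigma'},v_i\rangle$ combined with $v_i\notin\sigma'$, both of which are direct consequences of reflexivity and simplicity.
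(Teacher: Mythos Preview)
Your proof is correct and follows essentially the same approach as the paper's: the same two-case split according to whether $v_i\in\sigma$, the same key observation that integrality plus reflexivity forces $\langle m_\sigma,v\rangle\ge 0$ for rays $v\notin\sigma$, and the same edge argument in the second case (your edge from $m_\sigma$ to $m_{\sigma'}$ is the paper's edge $G^\ast$ from $\sigma^\ast$ to $w$, and your $m_\sigma+u_i$ is the paper's $l_\sigma$). The only cosmetic difference is that you work directly with the dual basis vector $u_i$ and the adjacent cone $\sigma'$, whereas the paper routes the same computation through the face $G$ of $P^\ast$ and its dual edge; your formulation is arguably a bit more transparent.
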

\begin{proof} Let $P^*$ be the dual reflexive polytope and $v_i$ the
  primitive lattice points on $\rho_i$ (i.e., the vertices of
  $P^*$). Let $h$ be the 
piecewise linear function on $N_\RR$ such that $h(v_i) = 0$  and
$h(v_j) = -1$ for $j \ne i$. Then $E_i$ is the divisor associated to
$h$ and $P_h = \{m \in M_\RR: \langle m, v_i \rangle \ge 0 \text{ and }
\langle m, v_j \rangle \ge -1 \text{ for } i \ne j\}$ is the
corresponding polytope. Geometrically, $P_h$ is $P$ after moving the facet
$F_i=(v_i)^*$ of $P$ one step inwards (so that the origin now lies on the
boundary of $P_h$). Thus $\dim P_h = \dim P$ and $E_i$ is big.

Let $\sigma$ be a maximal cone of $\Sigma$, and $\sigma^*$ the corresponding
vertex of $P$. Since $\Sigma$ is simplicial, $E_i$ is $\QQ$-Cartier so
there exists $l_\sigma \in M_\QQ$ with $\langle l_\sigma, v \rangle = h(v)$
for $v \in \sigma$. It is well known that if $l_\sigma$ is contained
in $P_h$ for all $\sigma$ then $E_i$ is nef.

There are two cases, $v_i \not\in \sigma$ or $v_i \in \sigma$.
If $v_i \not\in \sigma$, then $l_\sigma$ evaluates to $-1$ on any vertex
of $\sigma$. In other words, $l_\sigma = \sigma^* \in P$. Thus
$l_\sigma$ evaluates $\ge -1$ on any $v_j$. Moreover, since
by duality $l_\sigma=\sigma^* \not\in F_i$, we get that $l_\sigma$
evaluates $> -1$ on $v_i$. However, P is a lattice polytope, hence 
$l_\sigma$ as a vertex of $P$ is a lattice point, so 
$l_\sigma$ evaluates $\ge 0$ on $v_i$.

Assume $v_i \in \sigma$. Since $\Sigma$ is simplicial, the facet of $P^*$
corresponding to the maximal cone $\sigma$ is the convex hull of $v_i$ and
a $(d-2)$-dimensional face $G$ of $P^*$. Note that by duality $G^*$ is
an edge of $P$, and $\sigma^*$ is contained in
$G^*$ and the facet $F_i=(v_i)^*$. In particular, the intersection of the affine
hull of $G^*$ with the hyperplane $H$ orthogonal to $v_i$ (which is parallel
to $F_i$) is precisely the point $l_\sigma$. Since $G^*$ is an edge of $P^*$, it has a
vertex $w$ different from $\sigma^*$. Since $w$ is not in $F_i$ (otherwise, $G^*$
would be contained in $F_i$, hence $v_i$ would be contained in $G$), we have
that $v_i$ evaluates $> -1$ on $w$. Because $P$ is a lattice polytope, $w$ is a lattice
point, so $v_i$ evaluates $\ge 0$ on $w$. In particular, $l_\sigma$ (as it lies
on the affine hull of $G^*$ and evaluates to $0$ with $v_i$) lies between
$\sigma^*$ and $w$. Therefore, $l_\sigma \in P$. This implies that not only
$l_\sigma$ evaluates to $0$ on $v_i$, but it evaluates $\ge -1$ with all other
$v_j$'s, as desired.
\end{proof}

\begin{theorem} Let $X$ be a Calabi-Yau hypersurface in a simplicial
  Gorenstein 
Fano toric variety $Y$ defined by $f \in R_\beta$ where $R=\CC[x_1,
\dots , x_N]$ is the Cox
ring of $Y$ and $\beta$ is the anti-canonical class. If $\dim Y
\ge 3$ and $\codim (X \cap \Sing(Y)) \ge 3$ in $X$ then $\Def_X$ is
smooth and its tangent space is isomorphic to the degree $\beta$ part
of the Jacobian algebra of $f$. That is
$$T^1_X \simeq \left( \CC[x_1,
\dots , x_N]/(\frac{\partial f}{\partial x_1}, \dots ,
  \frac{\partial f}{\partial x_N}) \right)_\beta \, .$$ 
\end{theorem}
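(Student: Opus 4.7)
The plan is to apply Theorem \ref{THthm} to the $G$-subquadruple of the Cox construction $(Y, \Sing(Y), R, B(\Sigma))$ induced by the principal ideal $(f) \subset R$. This yields $(X, Z, S, \bar{J})$ with $Z = X \cap \Sing(Y)$, $S = R/(f)$ a Cohen--Macaulay hypersurface, and $\bar{J}$ the image of $B(\Sigma)$ in $S$. Since $Y$ is simplicial, $\CQ = 0$; the codimension hypothesis in the theorem is given; conditions (ii) and (iv) of Theorem \ref{THthm} require $H^1(X, \CO_X) = H^2(X, \CO_X) = 0$, which (combined with the toric vanishing $H^i(Y, \CO_Y) = 0$ for $i > 0$) follow from $\omega_X = \CO_X$ and $\dim Y \ge 4$; for $\dim Y = 3$ (the K3 case) the codimension hypothesis forces $Z = \emptyset$, and one instead combines Theorem \ref{hilbthm} with the Bogomolov--Tian--Todorov unobstructedness. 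In all cases we obtain $\Def^0_S \simeq \Def_X$.

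Since $S = R/(f)$ is a hypersurface, $T^2(S/k; S) = 0$, so $\Def^0_S$ is smooth and hence so is $\Def_X$. To identify the tangent space, I would use the Zariski--Jacobi sequence for $k \to R \to S$: because $R$ is polynomial, $T^1(R/k; S) = 0$, giving the cokernel presentation
\[
\Der_k(R, S)_0 \xrightarrow{\;D \mapsto D(f)\;} \Hom_R((f), S)_0 = S_\beta \twoheadrightarrow T^1(S/k; S)_0 .
\]
The image is the degree-$\beta$ piece of the Jacobian ideal $\bigl(\partial f/\partial x_i\bigr) \subseteq R$, so $T^1(S/k; S)_0 = R_\beta / (\partial f/\partial x_i)_\beta$; Euler's identity $\beta f = \sum q_i x_i \,\partial f/\partial x_i$ puts $f$ itself in the Jacobian ideal, so this quotient is exactly the degree-$\beta$ part of the Jacobian algebra of $f$, as claimed.

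The main obstacle is the depth conditions of Theorem \ref{THthm}(iii) and (v): $\Der_k(S, H^j_J(S))_0 = 0$ for $j = 1, 2$ and $T^1(S/k; H^1_J(S))_0 = 0$. Via the graded Cox local-cohomology formula $H^j_{\bar{J}}(S) \simeq \bigoplus_{\alpha \in \Cl(Y)} H^{j-1}(X, \CO_X(\alpha))$ for $j \ge 2$, together with the observation that a degree-zero derivation from $S$ is determined by its values on the generators $x_\rho$, the first family of conditions reduces to $H^{j-1}(X, \CO_X(D_\rho)) = 0$ for every ray $\rho \in \Sigma(1)$. Using the restriction sequence $0 \to \CO_Y(D_\rho + K_Y) \to \CO_Y(D_\rho) \to \CO_X(D_\rho) \to 0$ and the identity $D_\rho + K_Y = -E_\rho$, this translates into toric cohomology vanishings on $Y$, and it is precisely here that Lemma \ref{ben} is indispensable: the nef-and-big property of $E_\rho$ together with Kawamata--Viehweg vanishing (via Serre duality) kills the cohomology of $\CO_Y(-E_\rho)$, while $H^i(Y, \CO_Y(D_\rho)) = 0$ for $i > 0$ follows from Demazure vanishing. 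The $T^1$-coefficient condition is handled analogously via the surjection $\Hom_R((f), H^1_J(S))_0 \twoheadrightarrow T^1(S/k; H^1_J(S))_0$ and is controlled by the same cohomological input.
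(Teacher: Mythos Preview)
Your approach coincides with the paper's: apply Theorem~\ref{THthm}, reduce conditions (iii) and (v) to cohomology vanishings for $\CO_Y(D_\rho)$ and $\CO_Y(D_\rho + K_Y) = \CO_Y(-E_\rho)$, and invoke Lemma~\ref{ben} together with toric vanishing theorems. The paper phrases the reduction via the local-cohomology sequence for $0 \to R(-\beta) \to R \to S \to 0$ and cites the vanishing theorems of Musta{\c{t}}{\v{a}} and Batyrev--Borisov, while you use the restriction sequence on $Y$ and Demazure/Kawamata--Viehweg; these are the same argument in different clothes. The paper does not spell out the Jacobian-algebra identification of $T^1_X$, so your added paragraph is a useful complement (note that in the multigraded Cox setting ``Euler's identity'' means applying one of the $r$ Euler derivations $E_j$ for which the $j$-th coordinate of $\beta$ is nonzero; since $-K_Y$ is ample this is always possible).

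One point deserves care. You are right that $H^2(X,\CO_X)=0$ fails when $\dim Y = 3$: for an anticanonical hypersurface one has $H^{\dim X}(X,\CO_X)\simeq\CC$, so hypothesis (ii) of Theorem~\ref{THthm} is unavailable in the surface case. The paper simply asserts $H^i(X,\CO_X)=0$ for $i\ge 1$ in the setup and does not single out $\dim Y = 3$. Your proposed workaround via Bogomolov--Tian--Todorov, however, is not on firm ground as stated: the hypersurface $X$ need not be smooth (the paper explicitly allows arbitrary $f\in R_\beta$), and BTT unobstructedness is not available for singular K3 surfaces in general. If you wish to retain $\dim Y = 3$ in full generality you need a separate argument for the smoothness of $\Def_X$; otherwise restrict the statement to $\dim Y \ge 4$, where your (and the paper's) verification of Theorem~\ref{THthm} is complete.
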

\begin{proof} We try to apply Theorem \ref{THthm}. Since $H^i(X,
    \CO_X) = 0$ for $i=1,2$ by the 
  Calabi-Yau property, it suffices to show that $H^1_J(S)_\beta$, $H^2_J(S)_{D_i}$
  and $H^1_J(S)_{D_i}$ all vanish. From the exact sequence
$$0 \to R(-\beta) \to R \to S \to 0$$ we see that this would follow
from 
\begin{list}{\textup{(\roman{temp})}}{\usecounter{temp}}
\item $H^1_J(R)_\beta = 0$ and $H^1_J(R)_{D_i}=0$
\item $H^2_J(R)_0 \simeq H^1(Y, \CO_Y) = 0$
\item $H^2_J(R)_{D_i} \simeq H^1(Y, \CO_Y(D_i)) = 0$
\item  $H^{j+1}_J(R)_{D_i - \beta} \simeq H^j(Y, \CO_Y(K_Y +D_i)) = 0$ for $j=1,2$.
\end{list} 
Now (i) is true because $\depth_J(R) \ge 2$ and (ii) is true for all
complete toric varieties. To prove (iii) we have that $-K_Y$ is ample
so we may e.g.\ apply a vanishing theorem of Musta{\c{t}}{\v{a}} as
stated in \cite[Theorem 9.3.7]{cls:tor}. The last vanishing follows from the
assumption that $\dim Y \ge 3$, Lemma \ref{ben} and the $\QQ$-Cartier
version of a vanishing result
of Batyrev-Borisov as stated in \cite[Theorem 9.3.5]{cls:tor}.
\end{proof}

\subsection{Local cohomology computations}\label{lcc}  In the following it will be
important to be able to compute with the modules 
$H^i_B(S)$ where $S$ is the Cox ring of a toric variety and $B$ the
irrelevant ideal for some fan. There is a combinatorial method due to
Musta{\c{t}}{\v{a}} and we will make some simplifications in the case
$i=2$.

Let $\{m_1, \dots , m_s\}$ be monomial generators for any
squarefree monomial ideal $B \subseteq S$. For $I \subseteq \{1, \dots
, s\}$ let $T_I$ be the simplicial complex on the vertex set $\{1,
\dots , s\}$ where $\{j_1, \dots , j_k\}$ is a face if $x_i \nmid
\lcm(m_{j_1}, \dots , m_{j_k})$ for some $i \in I$. If $p \in \ZZ^m$
define $\negative(p) \subseteq \{1, \dots , m\}$ to be the set $\{i
\mid p_i \le -1\}$. Let $\{e_1, \dots , e_m\}$ be the standard
generators of $ \ZZ^m$.

\begin{theorem}\emph{(\cite[Theorem 2.1]{mu:lo})} \label{mu} If $p \in
\ZZ^m$ then there are isomorphisms $H^i_B(S)_p \simeq
\widetilde{H}^{i-2}(T_I; k)$ when $I = \negative(p)$. Moreover the map
$$H^i_B(S)_p \xrightarrow{\cdot x_i} H^i_B(S)_{p+e_i}$$
corresponds to the map $H^{i-2}(T_{\negative(p)} ; k) \to
H^{i-2}(T_{\negative(p+e_i)} ; k)$ induced in cohomology by the
inclusion $T_{\negative(p+e_i)} \subseteq T_{\negative(p)}$.  In
particular, if $p_i \ne -1$, then $\cdot x_i$ is an isomorphism.
\end{theorem}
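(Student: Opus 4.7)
The plan is to compute $H^i_B(S)$ using the $\ZZ^m$-graded \v{C}ech complex of the generators $m_1,\dots,m_s$ of $B$, then identify the degree-$p$ strand with the relative simplicial cochain complex of the pair $(\Delta^{s-1},T_I)$, and finally convert this to reduced cohomology of $T_I$ via the contractibility of the simplex.

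First, I would set up the \v{C}ech complex $\check C^\bullet$ with $\check C^0=S$ and $\check C^\ell=\bigoplus_{|J|=\ell}S_{m_J}$ for $\ell\ge 1$, where $m_J=\prod_{j\in J}m_j$; this complex is $\ZZ^m$-graded and its cohomology computes $H^\bullet_B(S)$. Fix $p\in\ZZ^m$ and set $I=\negative(p)$. The key computation is that $(S_{m_J})_p$ is one-dimensional, spanned by $x^p$, precisely when every negative coordinate of $p$ is available for inversion in $m_J$, i.e.\ $\negative(p)\subseteq\{i:x_i\mid m_J\}$; by the definition of $T_I$ this is equivalent to $J\notin T_I$, and otherwise $(S_{m_J})_p=0$.

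Second, I would identify $(\check C^\ell)_p$ for $\ell\ge 1$ with the relative simplicial cochain group $C^{\ell-1}(\Delta^{s-1},T_I;k)$, where $\Delta^{s-1}$ is the full simplex on $\{1,\dots,s\}$: both are free on $\{J:|J|=\ell,\ J\notin T_I\}$, and the \v{C}ech differential agrees with the relative coboundary under the standard alternating-sign conventions. The $\ell=0$ boundary case is consistent with the augmented convention $C^{-1}(\Delta^{s-1},T_I;k)=k$ when $\emptyset\notin T_I$ (i.e.\ $I=\emptyset$) and $0$ otherwise, matching $(\check C^0)_p=S_p$. Since $\Delta^{s-1}$ is contractible, the long exact sequence of the pair gives $H^\ell(C^\bullet(\Delta^{s-1},T_I;k))\simeq\widetilde H^{\ell-1}(T_I;k)$ for all $\ell$, whence
\begin{equation*}
H^i_B(S)_p\ \simeq\ H^{i-1}(C^\bullet(\Delta^{s-1},T_I;k))\ \simeq\ \widetilde H^{i-2}(T_I;k),
\end{equation*}
proving the first assertion.

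Third, for the multiplication map $\cdot x_i : H^i_B(S)_p \to H^i_B(S)_{p+e_i}$, the key point is that on the \v{C}ech complex it sends the generator $x^p$ of $(S_{m_J})_p$ to the generator $x^{p+e_i}$ of $(S_{m_J})_{p+e_i}$, hence induces the natural restriction between the two relative cochain complexes. Under the identification of Step 2 this restriction corresponds to the inclusion of subcomplexes $T_{\negative(p+e_i)}\subseteq T_{\negative(p)}$ (removing $i$ from $I$ can only enlarge $T$), so on cohomology we recover the asserted map. If $p_i\ne -1$, then $\negative(p+e_i)=\negative(p)$, the two simplicial complexes coincide, and the induced map is the identity, yielding the final statement. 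The main obstacle is the careful bookkeeping in Step 2: matching \v{C}ech signs with relative simplicial signs, and correctly handling the $\ell=0$ edge case together with $I=\emptyset$ (where $T_\emptyset$ is the void complex and both sides vanish). The multiplication statement also requires that the identifications be realized as chain maps, not just on cohomology; this is what makes the functoriality in the last assertion immediate.
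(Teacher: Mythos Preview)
The paper does not prove this theorem: it is quoted verbatim from Musta\c{t}\u{a}'s paper \cite{mu:lo} and used as a black box, so there is no ``paper's own proof'' to compare against. Your argument is a correct reconstruction of the standard \v{C}ech-complex proof (which is essentially Musta\c{t}\u{a}'s original approach): localize at the monomial generators, observe that $(S_{m_J})_p$ is one-dimensional precisely when $J\notin T_I$, and identify the resulting complex with the augmented relative cochains of $(\Delta^{s-1},T_I)$.

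One small slip: in your parenthetical ``removing $i$ from $I$ can only enlarge $T$'' you have the direction backwards. If $I'\subseteq I$ then $T_{I'}\subseteq T_I$, so shrinking $I$ \emph{shrinks} $T_I$; fortunately you state the correct inclusion $T_{\negative(p+e_i)}\subseteq T_{\negative(p)}$ anyway, and the induced map on reduced cohomology is the restriction along this inclusion, as required. The rest of your bookkeeping (the $\ell=0$ edge case via $\emptyset\in T_I\Leftrightarrow I\neq\emptyset$, and the chain-level compatibility of multiplication by $x_i$) is handled correctly.
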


From now on assume $S$ is the Cox ring and $B$ the irrelevant ideal
for a fan $\Sigma$.

\begin{lemma} The codimension 2 prime ideals of $B$ are the $(x_i,
x_j)$ with $\rho_i$ and $\rho_j$ not in the same cone in $\Sigma$.
\end{lemma}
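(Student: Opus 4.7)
The plan is to use the standard fact that minimal primes of a squarefree monomial ideal are generated by subsets of the variables, and translate the containment relation into the combinatorics of $\Sigma$. Recall that the ideal $B = B(\Sigma)$ has monomial generators $x^{\hat\sigma} = \prod_{\rho_i \nsubseteq \sigma} x_i$, and that the minimal such generators correspond to the maximal cones of $\Sigma$ (since enlarging $\sigma$ strictly shrinks $x^{\hat\sigma}$).

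First I would observe that for a subset $I \subseteq \{1, \dots , N\}$, the prime $\mathfrak p_I = (x_i : i \in I)$ contains $B$ if and only if every generator $x^{\hat\sigma}$ has at least one variable in $\mathfrak p_I$, which translates to: for every maximal cone $\sigma \in \Sigma$, there exists $i \in I$ with $\rho_i \nsubseteq \sigma$. Equivalently, the set of rays $\{\rho_i : i \in I\}$ is not contained in the ray set of any single cone of $\Sigma$. Since every minimal prime of a squarefree monomial ideal is of the form $\mathfrak p_I$, the minimal primes of $B$ are precisely indexed by the minimal such $I$.

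Next I would rule out codimension one primes: a single ideal $(x_k)$ contains $B$ iff $\rho_k \nsubseteq \sigma$ for every maximal cone $\sigma$, but $\rho_k$ itself is a cone of $\Sigma$ and so lies in some maximal cone. Hence no $(x_k)$ contains $B$, so all minimal primes have codimension at least $2$.

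Finally, specializing the criterion in the first step to $I = \{i,j\}$ with $i \ne j$, the prime $(x_i,x_j)$ contains $B$ if and only if $\rho_i$ and $\rho_j$ do not lie together in any cone of $\Sigma$; in that case, by the codimension one step, it is automatically minimal of codimension $2$. This gives exactly the statement of the lemma. The whole argument is essentially bookkeeping about monomial primes, so I do not anticipate any real obstacle; the only point requiring a line of care is the reduction from all generators $x^{\hat\sigma}$ to those indexed by maximal cones, which is what makes the condition depend only on maximal cones (and hence, by the cone-face relation, on whether the two rays lie together in \emph{any} cone at all).
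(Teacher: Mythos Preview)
Your argument is correct and is essentially the same as the paper's: the paper simply invokes the known primary decomposition of $B$ (citing \cite[Proposition 5.1.6]{cls:tor}), and what you have written is precisely the standard derivation of the codimension~2 part of that decomposition. There is nothing to add.
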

\begin{proof} This follows directly from the description of the prime
components of $B$. See e.g.\ \cite[Proposition 5.1.6]{cls:tor}.
\end{proof}

Let $B_2$ be the intersection of the codimension 2 primes of $B$. Let
$K$ be the simplicial complex which has $B_2$ as Stanley-Reisner
ideal. Let $\Gamma$ be the graph with vertices $\{0, \dots , m\}$ and
edges $\{i,j\}$ when $\rho_i$ and $\rho_j$ are in the same cone in
$\Sigma$. Define $C(\Gamma)$ to be the clique complex of $\Gamma$. Let
$\Gamma_I$ be the induced subgraph with vertices in $I$.

\begin{lemma} \label{TI1} The complex $K$ is the Alexander dual of
$C(\Gamma)$. In particular there is a one to one correspondence
between facets of $C(\Gamma)$ and a minimal generating set for $B_2$
given by $F \mapsto x_{F^c}$. This correspondence identifies $T_I$
with the complex with vertex set equal the set of facets of
$C(\Gamma)$ containing an element of $I$ and $\{F_{i_1}, \dots ,
F_{i_k}\}$ is a face if $i \in \bigcap F_{i_j}$ for some $i \in I$.
\end{lemma}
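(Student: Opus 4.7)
The plan is to verify all three claims by unraveling the relevant Stanley-Reisner definitions and applying standard Alexander duality. First, I would identify $K$ with the Alexander dual $C(\Gamma)^\ast$ by comparing primary decompositions. By the previous lemma, $B_2 = \bigcap (x_i,x_j)$ where $\{i,j\}$ ranges over non-edges of $\Gamma$. On the other hand, for any simplicial complex $\Delta$ on $\{1,\dots,N\}$, one has the standard primary decomposition $I_\Delta = \bigcap_{F \text{ facet of }\Delta}(x_k : k \notin F)$, and by the definition of Alexander duality the facets of $\Delta^\ast$ are precisely the complements of the minimal non-faces of $\Delta$. Applied to $\Delta = C(\Gamma)$: because $C(\Gamma)$ is a clique complex, its minimal non-faces are exactly the non-edges $\{i,j\}$ of $\Gamma$, so the facets of $C(\Gamma)^\ast$ are $[N]\setminus\{i,j\}$ for $\{i,j\}$ a non-edge. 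The corresponding primary decomposition then gives $I_{C(\Gamma)^\ast} = \bigcap (x_i,x_j) = B_2$, whence $K = C(\Gamma)^\ast$.

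Second, I would obtain the bijection between facets of $C(\Gamma)$ and a minimal monomial generating set of $B_2$ as a formal consequence of the previous step. Again by Alexander duality, the minimal monomial generators of $I_{\Delta^\ast}$ are the squarefree monomials $x_G$ with $G$ a minimal non-face of $\Delta^\ast$, equivalently $x_{F^c}$ for $F$ a facet of $\Delta$. Applied to $\Delta=C(\Gamma)$, this gives precisely the bijection $F \mapsto m_F := x_{F^c}$ asserted in the statement.

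Third, I would translate Musta\c{t}\v{a}'s definition of $T_I$ into this facet language. For any set of facets $F_1,\dots,F_k$ of $C(\Gamma)$, the key computation is
$$\lcm(m_{F_1},\dots,m_{F_k}) \;=\; x_{F_1^c \cup \dots \cup F_k^c} \;=\; x_{(F_1 \cap \dots \cap F_k)^c},$$
so $x_i \nmid \lcm(m_{F_1},\dots,m_{F_k})$ precisely when $i \in F_1 \cap \dots \cap F_k$. Specialising to $k=1$, a single facet $F_j$ is a vertex of $T_I$ exactly when $I \cap F_j \neq \emptyset$; and in general $\{F_{j_1},\dots,F_{j_k}\}$ is a face of $T_I$ iff there exists $i \in I$ with $i \in \bigcap_\ell F_{j_\ell}$, which is exactly the description in the lemma.

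The only real obstacle is bookkeeping: keeping the indexings by rays, by facets of $C(\Gamma)$, and by generators of $B_2$ straight, and remembering to complement at the appropriate points when passing between a complex and its Alexander dual. Once that is set up, every assertion in the lemma reduces to an immediate unwinding of definitions.
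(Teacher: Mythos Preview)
Your proposal is correct and follows essentially the same route as the paper. The only cosmetic difference is in the first claim: the paper verifies $f \in C(\Gamma)^\vee \Leftrightarrow f \in K$ directly from the definition of Alexander duality (via $f^c \notin C(\Gamma)$), whereas you pass through the Stanley--Reisner ideal and its primary decomposition; both rest on the same observation that the minimal non-faces of the clique complex are the non-edges of $\Gamma$, and your third step is identical to the paper's.
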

\begin{proof} The facets of $K$ are the complements $\{i,j\}^c$ where
$\{i,j\}$ is a non-edge of $\Gamma$. We have $f \in C(\Gamma)^\vee
\Leftrightarrow f^c \notin C(\Gamma) \Leftrightarrow f^c$ contains a
non-edge of $\Gamma$ $\Leftrightarrow f \subseteq \{i,j\}^c$ for a
non-edge $\{i,j\}$ $\Leftrightarrow f \in K$. A set $\{x_{F_{i_1}^c},
\dots , x_{F_{i_k}^c}\}$ is a face of $T_I$ if there is an $i \in I$
with $i \notin \bigcup F_{i_k}^c = \left( \bigcap F_{i_k} \right)^c$,
that is if $i \in \bigcap F_{i_k}$.
\end{proof}

\begin{lemma} \label{TI2} If $i \in I$ let $f_i$ be the face of $T_I$
given by $ \{F :i \in F\}$.  The map $H_0(\Gamma_I) \to H_0(T_I)$
defined by $[i] \mapsto [F]$ where $F$ is any vertex in $f_i$ is an
isomorphism.
\end{lemma}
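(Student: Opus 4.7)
The plan is to verify that the stated assignment $[i] \mapsto [F]$ (for $F \in f_i$) is a well-defined bijection on connected components, by unpacking the combinatorial description of $T_I$ from Lemma \ref{TI1}.

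First I would check that $f_i$ is a nonempty simplex of $T_I$. Any $i \in I$ is a vertex of $\Gamma$, hence contained in at least one facet (maximal clique) of $C(\Gamma)$, so $f_i \ne \emptyset$. Under the identification of Lemma \ref{TI1}, a set of facets $\{F_{j_1}, \dots, F_{j_k}\}$ spans a face of $T_I$ exactly when $\bigcap_\ell F_{j_\ell}$ contains an element of $I$; since every $F \in f_i$ contains $i \in I$, the entire vertex set $f_i$ is a simplex of $T_I$. In particular all vertices of $f_i$ lie in a single connected component of $T_I$, so $[i] \mapsto [F]$ does not depend on the choice of $F \in f_i$.

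Surjectivity is immediate: every vertex of $T_I$ is, by construction, a facet $F$ of $C(\Gamma)$ containing some $i \in I$, so $F \in f_i$ and $[F]$ is in the image. For injectivity, suppose a sequence of vertices $F_0, F_1, \dots, F_k$ of $T_I$ with $F_0 \in f_i$, $F_k \in f_j$ connects $[i]$ to $[j]$, each $\{F_\ell, F_{\ell+1}\}$ being a $1$-face of $T_I$. Each such $1$-face supplies some $i_\ell \in I$ with $i_\ell \in F_\ell \cap F_{\ell+1}$. Setting $i_{-1} := i$ and $i_k := j$, the facet $F_\ell$ is a clique of $\Gamma$ containing the two elements $i_{\ell-1}, i_\ell \in I$, so $\{i_{\ell-1}, i_\ell\}$ is an edge of $\Gamma_I$. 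Concatenation yields a path $i = i_{-1}\,\textrm{---}\,i_0\,\textrm{---}\cdots\textrm{---}\,i_k = j$ in $\Gamma_I$, proving $[i] = [j]$ in $H_0(\Gamma_I)$. The same reasoning reversed shows that an edge of $\Gamma_I$ produces an edge in $T_I$ (take any facet of $C(\Gamma)$ containing the two endpoints), so the map truly factors through $H_0(\Gamma_I)$.

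The only subtle point is the bookkeeping in the injectivity argument: one must keep track that each intermediate vertex $i_\ell$ sits in $I$ (guaranteed by the definition of a $1$-face of $T_I$) and that consecutive $i_\ell$'s are cohabitants of some facet of $C(\Gamma)$, hence an edge of $\Gamma$ and therefore of $\Gamma_I$. Once this is in place, the three ingredients — well-definedness, surjectivity, and injectivity — combine to give the claimed isomorphism $H_0(\Gamma_I) \xrightarrow{\sim} H_0(T_I)$.
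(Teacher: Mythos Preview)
Your argument is correct and follows essentially the same route as the paper: both proofs verify well-definedness via the observation that $f_i$ is a face of $T_I$, note surjectivity is immediate, and prove injectivity by extracting from an edge path $F_0,\dots,F_k$ in $T_I$ a sequence of elements $i_\ell \in F_\ell \cap F_{\ell+1} \cap I$ that are pairwise adjacent in $\Gamma_I$ because they cohabit a clique. One small wording issue: where you say an edge of $\Gamma_I$ ``produces an edge in $T_I$'', what you actually obtain is a common \emph{vertex} $F \in f_i \cap f_j$ (a single facet containing both endpoints), which is already enough to conclude that the map factors through $H_0(\Gamma_I)$.
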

\begin{proof} First note that $\{i,j\}$ is an edge in $\Gamma_I$ iff
there is a maximal clique $F$ of $\Gamma$ containing $\{i,j\}$ which
is iff $f_i \cap f_j \ne \emptyset$. To show that the map is well
defined assume $[i] = [j]$. Then there is an edge path through the
vertices $i = i_0, i_1, \dots ,i_r = j$ in $\Gamma_I$. Then since the
$f_{i_k} \cap f_{i_{k+1}} \ne \emptyset$ we may find a path from any
vertex of $f_i$ to any vertex in $f_j$.

The map is clearly surjective. To show injectivity assume $F \in f_i$
and $G \in f_j$ are connected by an edge path $F = F_0, F_1, \dots
,F_r = G$. This means that the intersection of cliques $F_k \cap
F_{k+1}$ contains an element $i_k \in I$. This implies that there are
edges $\{i_k, i_{k+1}\} \in \Gamma$ and therefore in the induced
subgraph $\Gamma_I$. Thus $[i] = [j]$.
\end{proof}

\begin{proposition} There is an isomorphism $H^2_B(S)_p \simeq
\widetilde{H}^0(\Gamma_I, k)$ where $I = \negative(p)$.
\end{proposition}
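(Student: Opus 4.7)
The plan is to apply Musta\c{t}\v{a}'s formula (Theorem~\ref{mu}) directly to the irrelevant ideal $B$, using its squarefree monomial generators $m_\sigma = \prod_{\rho_i \notin \sigma} x_i$ indexed by the maximal cones $\sigma$ of $\Sigma$. For $p \in \ZZ^N$ and $I = \negative(p)$, a direct divisibility check shows that $x_i \nmid \lcm(m_{\sigma_1},\dots,m_{\sigma_k})$ for some $i \in I$ exactly when $\bigcap_l \sigma_l$ contains some ray $\rho_i$ with $i \in I$. Hence the associated complex $T_I$ has vertex set $\{\sigma \in \Sigma_{\max} : \sigma \cap \{\rho_i : i \in I\} \ne \emptyset\}$, and $\{\sigma_1,\dots,\sigma_k\}$ is a face exactly when $\bigcap_l \sigma_l$ meets $\{\rho_i : i \in I\}$. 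Theorem~\ref{mu} then yields $H^2_B(S)_p \simeq \widetilde H^0(T_I;k)$, and the task reduces to exhibiting an isomorphism $\widetilde H^0(T_I;k) \simeq \widetilde H^0(\Gamma_I;k)$.

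For this identification I would mimic the proof of Lemma~\ref{TI2}, replacing the flag complex $C(\Gamma)$ with the cone complex of $\Sigma$. For each $i \in I$ set $f_i := \{\sigma \in \Sigma_{\max} : \rho_i \in \sigma\}$; this is a simplex of $T_I$ since all its vertices share the ray $\rho_i$. The map $\widetilde H_0(\Gamma_I;k) \to \widetilde H_0(T_I;k)$ defined by $[i] \mapsto [\sigma]$ for any $\sigma \in f_i$ is well defined because an edge $\{i,j\}$ of $\Gamma_I$ corresponds to $\rho_i,\rho_j$ lying in a common cone of $\Sigma$, hence in a common maximal cone, so $f_i \cap f_j \ne \emptyset$. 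It is surjective since every vertex $\sigma$ of $T_I$ satisfies $\sigma \in f_j$ for some $j \in I$. Injectivity is obtained by translating an edge path $\sigma = \sigma_0, \sigma_1, \ldots, \sigma_r = \tau$ in $T_I$, with $\sigma \in f_i$ and $\tau \in f_j$, back to $\Gamma_I$: each consecutive intersection $\sigma_l \cap \sigma_{l+1}$ contains some $\rho_{i_l}$ with $i_l \in I$, and since $\rho_{i_{l-1}},\rho_{i_l} \in \sigma_l$ the indices $i_{l-1}, i_l$ are adjacent in $\Gamma_I$; the two endpoints $i,i_0$ and $i_{r-1},j$ are handled similarly. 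Finally, reduced simplicial homology and cohomology coincide over the field $k$.

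The main obstacle is not a mathematical one but a matter of recognising the correct combinatorial framework: Lemmas~\ref{TI1} and~\ref{TI2} phrase things for $B_2$ and the clique complex $C(\Gamma)$, but the very same argument applies to $B$ and the cone complex of $\Sigma$, whose $1$-skeleton is by definition the graph $\Gamma$. With this observation, no comparison between $H^\bullet_B(S)$ and $H^\bullet_{B_2}(S)$ is needed.
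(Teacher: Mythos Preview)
Your argument is correct. The paper's one-line proof invokes Lemmas~\ref{TI1} and~\ref{TI2}, which are stated for the ideal $B_2$ (the intersection of the codimension~$2$ primes of $B$) rather than for $B$ itself; so the paper is implicitly using that $H^2_B(S)\simeq H^2_{B_2}(S)$, which holds because $V(B)\setminus V(B_2)$ has codimension $\ge 3$ in the regular ring $S$. You instead apply Theorem~\ref{mu} directly to $B$ with its minimal generators $m_\sigma$ indexed by maximal cones, and then rerun the argument of Lemma~\ref{TI2} verbatim with maximal cones of $\Sigma$ in place of maximal cliques of $\Gamma$; since the rays of any cone form a clique of $\Gamma$, the combinatorics carries over unchanged. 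Your route is more direct and makes the unstated depth comparison unnecessary, while the paper's detour through $B_2$ has the virtue of isolating a description (Alexander duality with the clique complex) that is independent of the fan structure. One small quibble: the phrase ``cone complex of $\Sigma$, whose $1$-skeleton is by definition the graph $\Gamma$'' is not quite right for non-simplicial fans (two rays in a common cone need not span a face), but you never actually use this identification, so it does not affect the proof.
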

\begin{proof} This follows now directly from Theorem \ref{mu}, Lemma
\ref{TI1} and \ref{TI2}.
\end{proof}

\subsection{$T^1$ for toric singularities} There has been much interest in
computing deformations of affine toric varieties. In particular there
are several combinatorial descriptions of these due to Altmann, see
e.g.\ \cite{al:one} and the references therein. We give here an
alternative description using the Cox ring and compute $T^1$ in some
special cases. To simplify matters we will assume  $\codim \Sing(X) \ge 3$.

Let $X$ be an affine toric variety
with cone $\sigma$ and Cox ring $S$. In the grading defined by the
abelian group 
$\Cl(X)$ we have $X = \Spec S_0$.  Set $Z = \Sing(X)$.  Thus $(X, Z,
S, (1))$ is an affine $G$-quadruple where $G = \Hom_\ZZ(\Cl(X),
\CC^\ast)$. The associated sheaf of algebras is just $S$ itself. If $\Sigma$ is the fan
consisting of smooth faces of $\sigma$ and $B=B(\Sigma)$ the
corresponding irrelevant ideal then the sheaves $\mathcal{H}_Z^i(\CS)$
correspond to the modules $H^i_B(S)$.

Let $n = \dim X = \rank N$, $m = |\Sigma(1) |$ and $r = \rank \Cl(X) =
m-n$. Let $C \subseteq \Cl(X)$ be the free part of the abelian
group. Fix an isomorphism $C \simeq \ZZ^r$ so that the map $\ZZ^m \to
C$ has matrix $A = (a_{ij})$. The $\ZZ^r$ part of the $\Cl(X)$ grading
on $S=\CC[x_1, \dots , x_m]$ is given by the columns of $A$, i.e.\ $\deg x_j = (a_{1j}, \dots ,
a_{rj})$. A set of Euler derivations as in  Definition \ref{eudef}
is $$\{ E_i = \sum_j a_{ij} x_j \frac{\partial}{\partial x_j} : i = 1,
\dots , r\} \, .$$
Thus the module $Q$ corresponding to the sheaf $\mathcal{Q}$ in
Definition \ref{eudef} has a graded  presentation 
\begin{equation}\label{qprestv} \bigoplus_{j=1}^m  S(- \deg x_j) \xrightarrow{\tilde{A}} S^r \to Q
\to 0\end{equation}
where $\tilde{A} = (a_{ij}x_j)$. The support of $Q$ on $X$ is the
non-simplicial locus corresponding to the irrelevant ideal $B(Q)
\subset S$ for the fan consisting of simplicial faces of $\sigma$. 

\begin{theorem} \label{afftor} Let $X$ be an affine toric variety.
\begin{list}{\textup{(\roman{temp})}}{\usecounter{temp}}
\item If $X$ is simplicial then $\Hom_S(\Omega^1_{S/S_0}, H^2_B(S))_0 \simeq T^1_X$. 
  \item If $\codim \Sing(X) \ge 3$, then there is an exact sequence 
$$0 \to \Hom_S(\Omega^1_{S/S_0}, H^2_B(S))_0 \to T^1_X \to
\Hom_S(Q, H^3_B(S))_0 \to 0$$
and inclusions $\Hom_S(Q, H^3_B(S))_0 \subseteq \ \Hom_{S_0}(Q_0,
H^3_{B_0}(S_0)) \subseteq H^3_{B_0}(S_0)^r$. 
\end{list} 
\end{theorem}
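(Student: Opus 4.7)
The plan is to identify $T^1_X$ with a global $\Ext$ group over $S$ and then apply Proposition \ref{h30} together with the Euler sequence \eqref{euseq}.

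\emph{Reduction to $\Ext$.} Taking $G$-invariants in the Zariski--Jacobi sequence for $k \to \CO_X \to \CS$, and using $\CS^G = \CO_X$ together with the vanishing $T^i(S/k;S) = 0$ for $i \geq 1$ (since $S$ is polynomial), yields $T^1_X \simeq T^2(\CS/\CO_X;\CS)^G$. Because $X = \Spec S_0$ is affine, the local--global spectral sequence for hyperext degenerates, and Lemma \ref{tiz} (whose hypothesis $\depth_Z \CS \geq 2$ is automatic here) identifies $\mathcal{T}^2(\CS/\CO_X;\CS) \simeq \mathcal{E}xt^2_\CS(\Omega^1_{\CS/\CO_X},\CS)$. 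Passing to global sections and using that $G$-invariants extract the degree-zero part for the quasitorus $G$,
\[T^1_X \simeq \Ext^2_S(\Omega^1_{S/S_0}, S)_0.\]

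\emph{Part (i).} Simpliciality forces $\Cl(X)$ finite, so $G$ is finite and $\mathfrak{g} = 0$. The Euler sequence \eqref{euseq} therefore collapses, giving $Q = 0$ and $\mathcal{T}or(\Omega^1_{\CS/\CO_X}) = \Omega^1_{\CS/\CO_X}$. Proposition \ref{h30}(iii) then reads $\mathcal{E}xt^2_\CS(\Omega^1,\CS) \simeq \mathcal{H}om_\CS(\Omega^1,\mathcal{H}^2_Z(\CS))$, and passing to global degree-zero sections on the affine $X$ produces the claimed isomorphism.

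\emph{Part (ii).} Cohen--Macaulayness of $X$ together with $\codim Z \geq 3$ gives $\depth_Z \CO_X \geq 3$; since $G$ is a quasitorus with trivial adjoint action, $(\CS \otimes \mathfrak{g})^G \simeq \CO_X^{\oplus r}$ inherits this depth bound. Proposition \ref{h30}(ii) then supplies the left exact sequence
\[0 \to \Hom_\CS(\Omega^1,\mathcal{H}^2_Z(\CS))^G \to H^0(X, \mathcal{E}xt^2_\CS(\Omega^1,\CS)^G) \to \Hom_\CS(Q,\mathcal{H}^3_Z(\CS))^G.\]
To establish surjectivity on the right (the remark following Proposition \ref{h30}) I would use that $S$ is regular: $Q$ is supported on $Z(Q) \subseteq Z$ with $\codim Z(Q) \geq 3$, and Cohen--Macaulayness of $S$ yields $\Ext^i_S(Q,S) = 0$ for $i < 3$, so the spectral sequence $\Ext^p_S(Q, H^q_B(S)) \Rightarrow \Ext^{p+q}_S(Q,S)$ collapses to $\Ext^3_S(Q,S) \simeq \Hom_S(Q, H^3_B(S))_0$. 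The freeness of $\CS \otimes \mathfrak{g}^*$ in \eqref{euseq} then lets the Yoneda boundary of $0 \to K \to \CS \otimes \mathfrak{g}^* \to Q \to 0$ realise this identification inside $\Ext^2_S(\Omega^1_{S/S_0},S)_0$, witnessing the required surjectivity. The inclusion chain is Lemma \ref{Qlemma} applied to $\mathcal{G} = \mathcal{H}^3_Z(\CS)$, after the identifications $\mathcal{H}^3_Z(\CS)^G = H^3_{B_0}(S_0)$ and $Q^G = Q_0$ given by exactness of $G$-invariants. The main obstacle is this surjectivity argument, which requires careful coordination between the Yoneda construction on the 4-term Euler sequence and the spectral sequence degeneration enabled by the regularity of $S$.
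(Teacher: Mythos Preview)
Your reduction $T^1_X \simeq \Ext^2_S(\Omega^1_{S/S_0}, S)_0$ and the treatment of part~(i) are essentially what the paper does (the paper phrases the reduction via the affine Zariski--Jacobi sequence for $k \to S_0 \to S$ rather than the sheaf-theoretic one, but these agree). Likewise, invoking Proposition~\ref{h30}(ii) and Lemma~\ref{Qlemma} for the left-exact sequence and the inclusions in part~(ii) is the paper's route.

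The gap is the surjectivity on the right in part~(ii), and your sketch does not close it. First, the spectral sequence $\Ext^p_S(Q, H^q_B(S)) \Rightarrow \Ext^{p+q}_{S,B}(Q,S)$ does not collapse to an isomorphism in total degree~$3$: even with $H^0_B(S)=H^1_B(S)=0$, the term $E_2^{1,2}=\Ext^1_S(Q,H^2_B(S))$ sits in the same total degree and obstructs the identification $\Ext^3_S(Q,S)\simeq \Hom_S(Q,H^3_B(S))$. Second, the codimension input you invoke (``$\codim Z(Q)\ge 3$'') is a statement about the support of $\CQ$ in $X$, whereas $\Ext^i_S(Q,S)=0$ for $i<3$ requires the codimension of the support of $Q$ in $\Spec S$; over non-simplicial points the fibres of $\pi$ need not have dimension $r$, so these codimensions cannot be exchanged without further argument. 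Third, the ``Yoneda boundary'' step is not a proof: you still have to identify the image of $\Ext^3_S(Q,S)$ inside $\Ext^2_S(\Omega^1_{S/S_0},S)$ with the cokernel arising from Proposition~\ref{h30}, and that demands a compatibility check between two different spectral sequences that you have not carried out.

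The paper avoids all of this by controlling $\Omega^1_{S/S_0}$ rather than $Q$. The cokernel in question is the kernel $K$ of $u\colon H^3_B(S)^r \to \Ext^3_{S,B}(\Omega^1_{S/S_0},S)$, and the edge map $v$ of the spectral sequence for $\Ext_{S,B}$ satisfies $\ker(v\circ u)=\Hom_S(Q,H^3_B(S))$. So one needs $v$ injective in degree~$0$, i.e.\ $\Ext^1_S(\Omega^1_{S/S_0},H^2_B(S))_0=0$. This is Lemma~\ref{Extsm}: since $S=\Sym(V)$ for a $G$-representation $V$, the beginning of an $SG$-resolution of $\Omega^1_{S/S_0}$ has the form $S^m \to S\otimes V^\ast$ with the image generated by invariants, so $\Hom$ of that image into any $N$ with $N^G=0$ vanishes in degree~$0$. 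Here $N=H^2_B(S)$ has $N_0=H^2_{\Sing(X)}(S_0)=0$ precisely by the hypothesis $\codim\Sing(X)\ge 3$. That is the missing idea.
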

\begin{proof} As in the proof of
  Lemma \ref{sit1},  $T^1_X = T^1_{S_0} = 
  T^1_{S_0}(S)_0$. The Zariski-Jacobi sequence for $\CC \to S_0 \to
  S$ and the fact that $S$ is regular yields $T^1_{S_0}(S) \simeq
  T^2_{S/S_0}$. Furthermore by Lemma \ref{tiz} we have $ T^2_{S/S_0}
  \simeq \Ext_S^2(\Omega_{S/S_0}^1, S)$. The result, except for the
  $0$ on the right in the exact sequence in (ii), now follows by
applying the proof of Proposition \ref{h30} to the affine
  case. 

Recall from the proof of Proposition \ref{h30}
that the cokernel of $\Hom_S(\Omega^1_{S/S_0}, H^2_B(S)) \to
\Ext_S^2(\Omega_{S/S_0}^1, S)$ is the kernel $K$ of $H^3_{B}(S)^r \to
\Ext_{S, B}^3(\Omega_{S/S_0}^1, S)$ in the long exact sequence for
$\Ext_{S, B}^i(\Omega_{S/S_0}^1, S)$. We have a commutative
diagram with exact rows and columns
\begin{equation*}
\begin{tikzcd} 
&&& \Ext_S^1(\Omega_{S/S_0}^1,  H^2_B(S)) \arrow{d}
\\ 0 \arrow{r} & K \arrow{r} \arrow{d} & H^3_{B}(S)^r \arrow{d}{=}
\arrow{r}{u} & \Ext_{S, B}^3(\Omega_{S/S_0}^1, S)
\arrow{d}{v} \\ 0 \arrow{r} & \Hom_S(Q,
  H^3_B(S)) \arrow{r} &H^3_{B}(S)^r \arrow{r} &
\Hom_S(\Omega^1_{S/S_0}, H^3_B(S))  
\end{tikzcd}
\end{equation*} 
where the right column comes from the spectral sequence for $\Ext_{S,
  B}^i(\Omega_{S/S_0}^1, S)$. By assumption $H^2_B(S)_0 = 0$ so by the
below Lemma \ref{Extsm}, $\Ext_S^1(\Omega_{S/S_0}^1,  H^2_B(S))_0 = 0$
and $K \simeq \Hom_S(Q,H^3_B(S))$.
\end{proof}

\begin{lemma} \label{Extsm} Let $G$ be a linearly reductive group
acting on an $n$-dimensional $k$ vector space $V$. Let $A = \Sym(V)$
and let $N$ be an $AG$-module with $N^G = 0$. Then
$\Ext^1_{A}(\Omega^1_{A/A^G}, N)^G = 0$.
\end{lemma}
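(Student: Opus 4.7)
The plan is to give a very short two-step free presentation of $\Omega^1_{A/A^G}$ coming from the Jacobi-Zariski sequence for $k \to A^G \to A$, and then exploit the exactness of $G$-invariants on everything in sight. Since $A = \Sym(V)$ is a polynomial ring, $\Omega^1_{A/k} \simeq A \otimes_k V^\ast$ is $A$-free of rank $n$, so the standard right-exact sequence
$$\Omega^1_{A^G/k} \otimes_{A^G} A \xrightarrow{\iota} A \otimes_k V^\ast \to \Omega^1_{A/A^G} \to 0$$
yields a short exact sequence $0 \to M \to A \otimes_k V^\ast \to \Omega^1_{A/A^G} \to 0$ where $M := \Image(\iota)$ is a $G$-stable $A$-submodule (with $G$ acting trivially on the factor $\Omega^1_{A^G/k}$).

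Next, apply $\Hom_A(-,N)$. Because $A \otimes_k V^\ast$ is free, $\Ext^1_A(A \otimes_k V^\ast, N) = 0$, giving an isomorphism of $AG$-modules
$$\Ext^1_A(\Omega^1_{A/A^G}, N) \simeq \coker\bigl(\Hom_A(A \otimes_k V^\ast, N) \to \Hom_A(M,N)\bigr).$$
Linear reductivity of $G$ makes $(-)^G$ exact, so it suffices to show $\Hom_A(M,N)^G = 0$.

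For this, the $G$-equivariant surjection $\pi : \Omega^1_{A^G/k} \otimes_{A^G} A \twoheadrightarrow M$ induces, by pull-back, a $G$-equivariant injection
$$\Hom_A(M,N) \hookrightarrow \Hom_A(\Omega^1_{A^G/k} \otimes_{A^G} A,\, N) \simeq \Hom_{A^G}(\Omega^1_{A^G/k}, N) = \Der_k(A^G, N).$$
The key observation is that any $G$-invariant derivation $D : A^G \to N$ must satisfy $g \cdot D(f) = D(g \cdot f) = D(f)$ for every $f \in A^G$, so $D$ takes values in $N^G = 0$. Consequently $\Der_k(A^G, N)^G = \Der_k(A^G, N^G) = 0$, forcing $\Hom_A(M,N)^G = 0$ and hence $\Ext^1_A(\Omega^1_{A/A^G}, N)^G = 0$.

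The argument is essentially mechanical; the only delicate part is bookkeeping the $G$-actions at each step (on $\iota$, on the $\Hom$-exact sequence, and on the identification $\Hom_A(\Omega^1_{A^G/k} \otimes_{A^G} A, N) = \Der_k(A^G, N)$) so that taking $G$-invariants can be done once at the end. There is no serious technical obstacle because the polynomial ring hypothesis trivialises the free resolution of $\Omega^1_{A/k}$; the linear reductivity of $G$ and the single observation that invariant derivations on $A^G$ land in $N^G$ do all the real work.
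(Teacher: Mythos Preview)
Your proof is correct and follows essentially the same route as the paper's. The paper merely makes the presentation explicit by choosing algebra generators $\varphi_1,\dots,\varphi_m$ for $A^G$ and writing the map $\iota$ as the Jacobian matrix $\bigl(\partial\varphi_j/\partial x_i\bigr): A^m \to A\otimes_k V^\ast$; your use of $\Omega^1_{A^G/k}\otimes_{A^G} A$ is the coordinate-free version of the same map, and the vanishing $\Hom_A(M,N)^G=0$ is deduced in both cases from the fact that $M$ is generated by $G$-invariants (equivalently, that invariant derivations of $A^G$ land in $N^G$).
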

\begin{proof} If $$A^G =
k[\varphi_1, \dots , \varphi_m] \subseteq A = k[x_1, \dots , x_n]$$
then the beginning of an $AG$ projective resolution of
$\Omega^1_{A/A^G}$ looks like
$$ \dots \to A^m \xrightarrow{\left(\frac{\partial \varphi_j}{\partial
      x_i} \right)} A \otimes_k V^{\ast} \to \Omega^1_{A/A^G} \to 0 \,
.$$ 
If $M$ is the image of $A^m$ in $A \otimes_k V^{\ast}$ then $M$ is
generated by invariants, so $\Hom_A(M,N)^G = 0$. Thus
$\Ext^1_{A}(\Omega^1_{A/A^G}, N)^G = 0$ by the long exact sequence for $\Ext$.
\end{proof}

We can ask when the contribution from $\Hom_S(\Omega^1_{S/S_0},
H^2_B(S))_0 = \Der_{S_0}(S, H^2_B(S))_0$ vanishes. This would happen
if $H^2_B(S)_{\alpha_i} = 0$ for $i = 1, \dots , m$ where $\alpha_i$
is the degree of $x_i$.
Lifting this to the $\ZZ^m$ grading we need $H^2_B(S)_{p} = 0$ when $p
= q + e_i$ and $q = (\dots 
, \langle u, v_j\rangle , \dots)$ for some $u \in M$.  If we assume
$\codim \Sing(X) \ge 3$ then $H^2_B(S)_0 = H^2_{\Sing(X)}(S_0) = 0$. In
the $\ZZ^m$ grading this means $H^2_B(S)_q = 0$ when $q$ is as
above. Thus by Theorem \ref{mu} it is enough to check when the
coordinate $\langle u, v_i\rangle$ of $q$ equals $-1$.

Let $\Gamma^f$ be the graph with vertices $\{0, \dots , m\}$ and edges
$\{i,j\}$ when $\{\rho_i, \rho_j\}$ span a 2-dimensional face of
$\sigma$ and $\Gamma^f_I$ the induced subgraph as above. If $\codim
\Sing(X) \ge 3$ then $\Gamma^f_I$ is a subgraph of $\Gamma_I(\Sigma)$
with the same vertex set so the map $\widetilde{H}_0(\Gamma^f_I) \to
\widetilde{H}_0(\Gamma_I)$ is surjective.
If $u \in M$ and $\langle u, v_i \rangle = -1$ let
$$I_i(u) =  \{j \in \{1, \dots , m\} : \langle u, v_j \rangle \le -1\}
\setminus \{i\} $$ and set $\Gamma_i(u) = \Gamma^f_{I_i(u)}$. (These graphs
also appear in the study of deformations of smooth complete toric
varieties in \cite{il:def}.) Thus we have

\begin{proposition}\label{rigid} Let $X$ be an affine toric variety
  that  is smooth in codimension 2 and 
$\Gamma_i(u)$ is connected for all $u \in M$. Then $\Hom_S(\Omega^1_{S/S_0},
H^2_B(S))_0 = 0$. 
\end{proposition}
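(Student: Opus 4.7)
The target object equals $\Der_{S_0}(S, H^2_B(S))_0$ (by Lemma \ref{eu} and the adjunction between $\Omega^1$ and $\Der$), and any $S_0$-linear derivation of $S$ is determined by its values on the generators $x_1,\dots,x_m$, so it suffices to prove $H^2_B(S)_{\alpha_i}=0$ for every $i$, where $\alpha_i=\deg x_i\in\Cl(X)$. Using the exact sequence $0\to M\xrightarrow{b}\ZZ^m\to\Cl(X)\to 0$, this refines in the $\ZZ^m$-grading to
\[
  H^2_B(S)_{\alpha_i}=\bigoplus_{u\in M}H^2_B(S)_{e_i+b(u)},
\]
so the task reduces to showing $H^2_B(S)_{e_i+b(u)}=0$ for every $u\in M$.

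The next move is a case split on the sign of $p_i$ where $p:=e_i+b(u)$ (so $p_i=\langle u,v_i\rangle+1$). If $\langle u,v_i\rangle\ne -1$, then $p_i\ne 0$ and the last assertion of Theorem \ref{mu} gives an isomorphism $H^2_B(S)_{b(u)}\xrightarrow{\cdot x_i}H^2_B(S)_p$. But $\bigoplus_{u\in M}H^2_B(S)_{b(u)}=H^2_B(S)^G$ corresponds to $H^2_{\Sing(X)}(S_0)$, which vanishes because toric varieties are Cohen--Macaulay and the hypothesis gives $\codim\Sing(X)\ge 3$. This takes care of the first case.

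It remains to treat $\langle u,v_i\rangle=-1$, i.e.\ $p_i=0$. Then $\negative(p)=\{j\ne i:\langle u,v_j\rangle\le -1\}=I_i(u)$. When $I_i(u)=\emptyset$, $p$ is coordinatewise non-negative and repeated use of the isomorphism in Theorem \ref{mu} brings us back to $H^2_B(S)_0=0$. Otherwise, the proposition at the end of Section \ref{lcc} identifies $H^2_B(S)_p\cong\widetilde H^0(\Gamma_{I_i(u)},k)$, so it suffices to prove $\Gamma_{I_i(u)}$ is connected. Here the codimension-$2$ smoothness is deployed: every $2$-dimensional face of $\sigma$ is smooth, hence a cone of $\Sigma$, which means every edge of $\Gamma^f$ is an edge of $\Gamma$. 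Restricting to the vertex set $I_i(u)$, $\Gamma_i(u)=\Gamma^f_{I_i(u)}$ is a spanning subgraph of $\Gamma_{I_i(u)}$, so connectedness of $\Gamma_i(u)$ — the hypothesis — forces connectedness of $\Gamma_{I_i(u)}$, and $\widetilde H^0$ vanishes.

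The main obstacle is the bookkeeping in the second case: one must carefully pass from the $\Cl(X)$-grading to the finer $\ZZ^m$-grading, match up $\negative(p)$ with $I_i(u)$, and then verify that the smooth-in-codimension-$2$ assumption is precisely what makes $\Gamma^f\subseteq\Gamma$, thereby transferring the combinatorial hypothesis on $\Gamma_i(u)$ to the graph $\Gamma_{I_i(u)}$ actually controlling the local cohomology. Once these identifications are in place the vanishing is automatic.
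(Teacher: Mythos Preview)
Your proof is correct and follows essentially the same approach as the paper: the argument is precisely the discussion preceding the proposition, which reduces the vanishing of $\Der_{S_0}(S,H^2_B(S))_0$ to $H^2_B(S)_{e_i+b(u)}=0$, uses the $\cdot x_i$ isomorphism from Theorem \ref{mu} together with $H^2_B(S)_0=H^2_{\Sing(X)}(S_0)=0$ to dispose of the case $\langle u,v_i\rangle\ne -1$, and in the remaining case invokes $\Gamma^f_{I_i(u)}\subseteq\Gamma_{I_i(u)}$ to transfer connectedness. Your write-up makes the bookkeeping more explicit (e.g.\ the separate treatment of $I_i(u)=\emptyset$), but the structure and key steps are identical.
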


The connectivity of these graphs can be proven if there is a polytope
we can use. For this we need the following lemma.
\begin{lemma} \label{Plemma} Let $P$ be a polytope, $H^\ast$ a closed
half-space with bounding hyperplane $H$ and $v$ a vertex of $P$ with
$v \in H$. If $\Gamma_v(H)$ is the induced subgraph of the edge graph
of $P$ on the vertices in
$$\{w \in \vertices P : w \in H^\ast\} \setminus \{v\}$$
then $\Gamma_v(H)$ is connected.
\end{lemma}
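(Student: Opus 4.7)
The plan is to construct, for every $w\in\Gamma_v(H)$, an edge-path in the $1$-skeleton of $P$ from $w$ to a canonical target vertex $v^{\ast}\in\Gamma_v(H)$ that stays entirely inside $\Gamma_v(H)$; this gives connectedness at once. The mechanism I would use is the simplex method on $P$ with an objective built from two linear functionals, one acting as a barrier keeping the path inside $H^{\ast}$ and the other steering it away from $v$.

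Let $\ell$ be a linear functional with $H=\{\ell=0\}$ and $H^{\ast}=\{\ell\ge 0\}$, so $\ell(v)=0$. I would choose a generic linear functional $\phi$ taking distinct values on $\vertices(P)$ and satisfying $\phi(v)<\phi(w)$ for every $w\in\vertices(P)\cap H^{\ast}\setminus\{v\}$, and then, since $P$ has finitely many edge directions, pick $\lambda>0$ large enough that every edge direction $d$ with $\ell(d)<0$ satisfies $\phi(d)+\lambda\ell(d)<0$. Running the simplex method on $P$ with objective $\phi+\lambda\ell$ from any start $w\in\Gamma_v(H)$, each simplex edge direction $d$ satisfies $\phi(d)+\lambda\ell(d)>0$, which by the choice of $\lambda$ forces $\ell(d)\ge 0$; hence $\ell$ is non-decreasing along the path and the path stays in $H^{\ast}$.

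The key step is to verify that the simplex path never visits $v$. Inductively, if the current vertex $w_c\in\Gamma_v(H)$ and the simplex chose an edge to $v$, then $\ell(v)=0$ combined with $\ell(w_c)\ge 0$ and $\ell(d)\ge 0$ forces $\ell(w_c)=0=\ell(d)$; then $\phi(d)+\lambda\ell(d)>0$ reduces to $\phi(v)>\phi(w_c)$, contradicting the choice of $\phi$. Hence the simplex path lies entirely in $\Gamma_v(H)$ and terminates at the global maximizer $v^{\ast}$ of $\phi+\lambda\ell$ on $P$; for $\lambda$ large this is the $\ell$-maximizer tie-broken by $\phi$, which belongs to $\Gamma_v(H)$ (when the latter is non-empty) either automatically if $\max_P\ell>0$, or by the $\phi$-tie-breaking rule if $\max_P\ell=0$.

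The main obstacle is precisely the requirement that the path avoid $v$: this is what dictates the asymmetric choice of $\phi$ so that $\phi(v)$ is strictly below the $\phi$-values of all other vertices in $H^{\ast}$, and it is what rules out a direct simplex step to $v$ from another vertex on $H$. Once this is in hand, the rest is the standard argument that the vertex set of a polytope in a closed half-space is connected in the $1$-skeleton, here repackaged with $\phi+\lambda\ell$, $\lambda\gg 0$, playing the role of a barrier objective.
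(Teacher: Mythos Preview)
Your argument is correct, and it takes a genuinely different route from the paper's proof. The paper splits into two cases: if $H$ meets the interior of $P$, it runs a bare simplex ascent with objective $\ell$ (every vertex of $\Gamma_v(H)$ not on the far supporting hyperplane $H'\parallel H$ has a neighbour strictly closer to $H'$, hence still in $\Gamma_v(H)$), and then uses that the vertices on $H'$ span a face with connected edge graph. When $H$ is a supporting hyperplane, it instead invokes Balinski's theorem (the edge graph of a $d$-polytope is $d$-connected) applied to $P$ or to the face $P\cap H^\ast$, so that removing the single vertex $v$ leaves a connected graph.

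Your version folds both cases into a single simplex run by replacing $\ell$ with the perturbed objective $\phi+\lambda\ell$. The large $\lambda$ does the job the paper's $\ell$ does in the generic case (keeping the path in $H^\ast$), while your choice of $\phi$ with $\phi(v)$ minimal among $H^\ast$-vertices is exactly what substitutes for the appeal to Balinski in the degenerate case: it forces the walk to avoid $v$ even when the whole path is constrained to $H$. The existence of such a $\phi$ is unproblematic because $v$ is a vertex of $P$ and hence admits a strictly supporting linear functional, which you can then perturb generically. The payoff of your approach is uniformity and self-containment (no Balinski); the payoff of the paper's approach is that each case is shorter and uses only the most naive objective, at the price of citing a nontrivial connectivity theorem for the boundary case.
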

\begin{proof} If $H \cap \inter P \ne \emptyset$ let $H^\prime$ be the
supporting hyperplane in $H^\ast$ that is parallell to $H$. By
standard arguments every vertex in $\Gamma_v(H)$ is either on
$H^\prime$ or has a neighbor vertex in $H^\ast$ that is nearer to
$H^\prime$. Thus $\Gamma_v(H)$ is connected in this case.

If $H \cap \inter P = \emptyset$ then $H$ is a supporting
hyperplane. If $P \subset H^\ast$ then $\Gamma_v(H)$ is the edge graph
of $P$ with one vertex removed. Since edge graphs of $d$-polytopes are
$d$-connected $\Gamma_v(H)$ is connected (trivially also for $d =
0,1$). If $P \cap H^\ast = F$ is a face then the same argument applied
to $F$ yields the result.
\end{proof}

\begin{proposition}\label{qG} A $\QQ$-Gorenstein affine toric variety
that is smooth in codimension 2 has $\Hom_S(\Omega^1_{S/S_0},
H^2_B(S))_0 = 0$.
\end{proposition}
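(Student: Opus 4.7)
The plan is to reduce to Proposition \ref{rigid}: it suffices to show that $\Gamma_i(u)$ is connected for every $u \in M$ and every $i$ with $\langle u, v_i\rangle = -1$. The tool for this will be Lemma \ref{Plemma}, once we identify a polytope whose edge graph is $\Gamma^f$.

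First, I would spell out what the $\QQ$-Gorenstein condition buys us: since $K_X = -\sum_j D_j$ is $\QQ$-Cartier on the affine toric variety $X = X_\sigma$, there exists $u_\sigma \in M_\QQ$ whose associated support function is linear on $\sigma$, i.e.\ $\langle u_\sigma, v_j\rangle = -1$ for every $j = 1, \dots, m$. Consequently all the primitive ray generators $v_1, \dots, v_m$ lie on the affine hyperplane $H_\sigma = \{x \in N_\RR : \langle u_\sigma, x\rangle = -1\}$, and $\sigma$ is the cone from the origin over the polytope $P := \mathrm{conv}(v_1, \dots, v_m) \subset H_\sigma$. Because each $\rho_j$ is an extremal ray of $\sigma$, each $v_j$ is in fact a vertex of $P$, and under cone/polytope duality the $2$-dimensional faces of $\sigma$ correspond bijectively to the edges of $P$. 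Hence the graph $\Gamma^f$ defined in Section \ref{lcc} is nothing but the edge graph of the polytope $P$.

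Now fix $u \in M$ and $i$ with $\langle u, v_i\rangle = -1$, and let $H = \{x : \langle u, x\rangle = -1\}$ with closed half-space $H^* = \{x : \langle u, x\rangle \le -1\}$. Then $v_i \in H$, and by the very definition of $I_i(u)$ the vertex set of $\Gamma_i(u)$ is precisely $\{v_j \in \vertices(P) : v_j \in H^*\} \setminus \{v_i\}$, with edges inherited from $\Gamma^f$, i.e.\ from the edge graph of $P$. This is exactly the graph $\Gamma_{v_i}(H)$ of Lemma \ref{Plemma}, so Lemma \ref{Plemma} gives that it is connected. Applying Proposition \ref{rigid} then yields the desired vanishing.

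The only delicate point in this plan — not really an obstacle but worth stating carefully — is the identification of $\Gamma^f$ with the edge graph of $P$, which rests on the fact that under the $\QQ$-Gorenstein hypothesis the $v_j$ are genuine vertices of $P$ (not merely lattice points on its boundary) and that faces of $\sigma$ descend to faces of $P$ of dimension one lower. Everything else is a direct invocation of the earlier results.
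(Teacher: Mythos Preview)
Your proposal is correct and follows essentially the same route as the paper's own proof: both use the $\QQ$-Gorenstein condition to place the ray generators $v_j$ on a common affine hyperplane, form the polytope $P$ as their convex hull, identify $\Gamma_i(u)$ with a graph of the form $\Gamma_v(H)$ from Lemma \ref{Plemma}, and then invoke Proposition \ref{rigid}. The only cosmetic difference is the normalization (the paper takes a primitive $u_0 \in M$ with $\langle u_0, v_i\rangle = g$, whereas you take $u_\sigma \in M_\QQ$ with value $-1$); your write-up also spells out more explicitly why $\Gamma^f$ coincides with the edge graph of $P$, which the paper leaves tacit.
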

\begin{proof} An affine toric variety is $\QQ$-Gorenstein if and only
if there is a primitive $u_0 \in M$ and a positive integer $g$ such
that $\langle u_0, v_i \rangle = g$ for all generators $v_i$ of the
rays of $\sigma$. This means that if $P$ is the convex hull of the
$v_i$ in the hyperplane $\langle u_0, -\rangle = g$, then for all $u
\in M$ the graph $\Gamma_i(u)$ is a $\Gamma_v(H)$ as in Lemma
\ref{Plemma}.
\end{proof}

\begin{example} We compute $T^1_X$ for a non-simplicial
  toric 
  3-dimensional Gorenstein isolated singularity. Note that it follows
  from Theorem \ref{afftor} and
  Proposition \ref{qG}  that a toric 
  $d$-dimensional $\QQ$-Gorenstein isolated singularity is rigid if $d
  \ge 4$. (For a complete
  description of $T^1$ for all toric $\QQ$-Gorenstein singularities
  see \cite{al:min} and \cite{al:one}.) From Theorem \ref{afftor} and
  Proposition \ref{qG} it follows that $T^1_X \simeq
  \Hom(Q,H^3_B(S))_0$.

The cone $\sigma$ has a special form as it comes from a plane lattice
polygon $P$ with smooth face fan (and more than 3 vertices). If $(\alpha_1, \beta_1), ... ,
(\alpha_m, \beta_m)$ are the vertices  
of such a $P$ then the primitive generators of the rays of $\sigma$
are $v_i = (\alpha_i, \beta_i, 1)$. 

The fan $\Sigma$ consists of 
all faces of $\sigma$ except $\sigma$ itself. Using the
description of the prime components in  \cite[Proposition
5.1.6]{cls:tor} one sees that $B = B_2$ in the notation Section
\ref{lcc}. Thus we may use the methods there to show that $H^3_B(S)_p \simeq
H^1(\Gamma_I, k)$ where $I = \negative(p)$. Now it is easy to see that
$H^3_B(S)_p \ne 0$ if and only if all $p_i \le -1$. Note that the
Gorenstein property can be seen through the one to one
correspondence $p \mapsto -p - (1, \dots, 1)$ between non-zero pieces
of $H^3_B(S)_0$ and $S_0$. 

In our situation $\Cl(X) \simeq \ZZ^r$ so there is an exact sequence
$$0 \to \ZZ^3 \xrightarrow{A^\prime} \ZZ^m \xrightarrow{A} \ZZ^r 
\to 0$$ where the rows of $A^\prime$ are $[\alpha_i, \beta_i, 1]$ and
$A = (a_{ij})$ gives the set of Euler derivations. Using the presentation
of $Q$ in \eqref{qprestv} we may describe the
$\ZZ^m$ graded pieces of $\Hom(Q,H^3_B(S))_0$ as follows. It is
convenient to use the above one-to-one correspondence so set $q = -p -
(1, \dots, 1)$. If
$\Hom(Q,H^3_B(S))_{0,p} \ne 0$ then $A\cdot q = 0$ and
$\negative(q) = \emptyset$. If this is satisfied then $\Hom(Q,H^3_B(S))_{0,p}$ is 
isomorphic to the kernel of the submatrix $A^t_q$ consisting of the
rows $A^t_i$ of $A^t$ where $q_i > 0$.

From the smoothness of $P$ it follows that \emph{all} the $3 \times 3$
minors of $A^\prime$ are non-zero. Thus $A$ has the special property:
\begin{equation}\label{min}\text{All $r \times r$ minors of
$A$ are non-zero.}
\end{equation} Let $k$ be the number of $i$ with $q_i \ne 0$. If $k
\ge r$ then \eqref{min} implies that  $A^t_q$ has trivial kernel. On the other
hand if $1 \le k < r$ let $q^\prime = (q_{i_1}, \dots , q_{i_k})$ be the vector
of non-zero components. Since $Aq = 0$ we must have $(A^t_q)^t
q^\prime = 0$, but this is impossible since by \eqref{min} the columns
of $(A^t_q)^t$ are independent.

We conclude that the only non-zero $\Hom(Q,H^3_B(S))_{0,p}$ is when $p
= (-1, \dots , -1)$ and that $\dim T^1_X = \dim \Hom(Q,H^3_B(S))_{0} =
r = m-3$.
\end{example}

\subsection{Rigidity results}
The above together with Corollary \ref{cor2} yield rigidity
results for toric singularities. 
First we reprove a theorem of Altmann in \cite{al:min}.
\begin{corollary}\label{klaus} A $\QQ$-Gorenstein affine toric variety
that is smooth in codimension 2 and simplicial in codimension 3 is
rigid.
\end{corollary}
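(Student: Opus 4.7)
The plan is to apply Corollary \ref{cor2} with $M = S$ to the affine $G$-quadruple $(X, \Sing(X), S, (1))$ described at the start of Section \ref{tvsec}, where $G = \Hom_\ZZ(\Cl(X), \CC^\ast)$ is a quasi-torus and $S$ is the Cox ring (a polynomial ring, hence regular). With these choices, $\CF = \CS$ and $\CF^G = \CO_X$, so the conclusion $T^1_X(\CF) = 0$ is precisely the rigidity statement we want.

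It remains to verify each of the four hypotheses of Corollary \ref{cor2}. First I will dispose of (ii) and (iv): since $X$ is affine we have $H^2(X, \CO_X) = 0$, and since $J = (1)$ we have $H^2_J(S) = 0$, making (iv) trivial. Next, for the depth conditions in (i): the bound $\depth_Z \CS \ge 2$ is noted at the start of Section \ref{tvsec} as always holding for the Cox quadruple of a toric variety. By the Hochster–Roberts theorem (applied as in the remark following Corollary \ref{cor2}) $X$ is Cohen--Macaulay and equidimensional, so depth agrees with codimension on $\CO_X$. The hypothesis ``smooth in codimension 2'' gives $\codim_X Z \ge 3$, so $\depth_Z \CO_X \ge 3$. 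For $\depth_{Z(\CQ)} \CO_X \ge 4$ I use the identification, made just before Proposition \ref{zqpos} and confirmed in the example following it for toric varieties, that $Z(\CQ)$ is contained in (indeed equal to) the non-simplicial locus; the hypothesis ``simplicial in codimension 3'' then yields $\codim_X Z(\CQ) \ge 4$.

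The key step, and the real content of the corollary, is verifying condition (iii), namely
$$\Hom_\CS(\Omega^1_{\CS/\CO_X},\mathcal{H}^2_Z(\CS))^G = 0.$$
Under the correspondence between sheaves on $X$ and $\Cl(X)$-graded $S$-modules, this is the statement $\Hom_S(\Omega^1_{S/S_0}, H^2_B(S))_0 = 0$, where $B$ is the irrelevant ideal of the fan of smooth faces of $\sigma$. This is precisely the conclusion of Proposition \ref{qG}, whose hypotheses are exactly that $X$ is $\QQ$-Gorenstein and smooth in codimension 2.

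With all four hypotheses of Corollary \ref{cor2} verified, we conclude $T^1_X = T^1_X(\CO_X) = 0$, so $X$ is rigid. The only non-formal input is condition (iii), which is the reason the $\QQ$-Gorenstein assumption appears: it provides the lattice polytope whose edge-graph connectivity (Lemma \ref{Plemma}) forces the relevant local cohomology pieces to vanish via Proposition \ref{rigid}.
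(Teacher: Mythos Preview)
Your proof is correct and follows exactly the paper's approach: the paper's own proof reads simply ``This follows directly from Corollary \ref{cor2} and Proposition \ref{qG},'' and you have accurately unpacked what that means by verifying each hypothesis of Corollary \ref{cor2} in the affine toric setting and invoking Proposition \ref{qG} for the key vanishing in condition (iii).
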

\begin{proof} This follows directly from Corollary \ref{cor2} and
  Proposition \ref{qG}.
\end{proof}

With the same type of arguments we can reprove Totaro's generalization,
\cite[Theorem 5.1]{to:ju} of theorems of Bien-Brion and de
Fernex-Hacon.
\begin{corollary} A toric Fano variety that is smooth in codimension
2 and simplicial in codimension 3 is rigid.
\end{corollary}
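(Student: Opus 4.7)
The plan is to apply Corollary~\ref{cor2} with $M = S$ (so that $\CF = \CS$ and $\CF^G = \CO_X$) to the toric $G$-quadruple $(X,\Sing(X),S,B(\Sigma))$, following the template of Corollary~\ref{klaus} but upgrading the vanishing inputs from the affine $\QQ$-Gorenstein setting to the projective Fano setting. The depth hypotheses (i) of Corollary~\ref{cor2} transfer directly: the remarks opening Section~\ref{tvsec} give $\depth_Z \CS \ge 2$ for free; Cohen--Macaulayness of toric varieties together with the smooth-in-codimension-2 hypothesis yields $\depth_Z \CO_X \ge 3$; and by the example after Proposition~\ref{zqpos} the support $Z(\CQ)$ is exactly the non-simplicial locus, so simplicial-in-codimension-3 gives $\depth_{Z(\CQ)} \CO_X \ge 4$. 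Condition (ii) is automatic because $H^2(X,\CO_X)=0$ for every complete toric variety.

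For condition (iii) I must verify $\Hom_\CS(\Omega^1_{\CS/\CO_X}, \mathcal{H}^2_Z(\CS))^G = 0$. This is the $G$-invariants of the global sections of a coherent sheaf on $X$, so it suffices to show that the corresponding sheaf is zero, a local property. On each affine chart $U_\sigma$, the variety $U_\sigma$ is smooth in codimension 2 (inherited from $X$) and is $\QQ$-Gorenstein: the Fano assumption makes $-K_X$ ample $\QQ$-Cartier, so its restriction $-K_{U_\sigma}$ is $\QQ$-Cartier. Proposition~\ref{qG} therefore applies chart by chart and produces the desired local vanishing, hence the global one.

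Condition (iv), $\Der_k(S,H^2_B(S))^G = 0$, is where the Fano hypothesis really enters. Since $S = \CC[x_1,\dots,x_N]$ is a polynomial ring with $\deg x_i = [D_i] \in \Cl(X)$, unwinding the grading gives $\Der_k(S,H^2_B(S))^G = \bigoplus_{i=1}^{N} H^2_B(S)_{[D_i]}$, and the local-cohomology/sheaf-cohomology exact sequence for Cox rings (used at the start of Section~\ref{lcc} and in \cite[Proposition 2.3]{ems:lo}) identifies $H^2_B(S)_{[D_i]}\simeq H^1(X,\CO_X(D_i))$. Thus the task is to prove $H^1(X,\CO_X(D_i)) = 0$ for every torus-invariant prime $D_i$. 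Exactly as invoked in the Calabi--Yau hypersurface theorem earlier in this section, the ampleness of $-K_X$ produces this vanishing via Musta{\c{t}}{\v{a}}'s vanishing theorem (\cite[Theorem 9.3.7]{cls:tor}). Once (i)--(iv) all hold, Corollary~\ref{cor2} immediately gives $T^1_X = 0$, which is the required rigidity.

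The main obstacle lies in condition (iv): an individual torus-invariant divisor $D_i$ need not be nef (or anti-nef), so neither Demazure vanishing nor a direct application of Kawamata--Viehweg to $D_i$ itself works. What rescues the argument is precisely that the Fano hypothesis feeds into Musta{\c{t}}{\v{a}}'s toric refinement of Kawamata--Viehweg, exactly as used already in the Calabi--Yau hypersurface case. Conditions (i)--(iii) are essentially toric bookkeeping and the local Altmann-style argument of Corollary~\ref{klaus}, so the only genuinely new ingredient compared with the affine case is this global cohomological vanishing.
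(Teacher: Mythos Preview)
Your proof is correct and follows essentially the same route as the paper: the paper invokes Corollary~\ref{klaus} to dispatch the local conditions (i)--(iii) of Corollary~\ref{cor2} (which amounts precisely to your chart-by-chart use of Proposition~\ref{qG}, via the observation that Fano implies $\QQ$-Gorenstein), and then handles (iv) by the identification $H^2_{B(\Sigma)}(S)_{D_i}\simeq H^1(X,\CO_X(D_i))=0$ for $X$ Fano, exactly as you do. The only extra remark the paper makes is an alternative combinatorial route to (iv): since the fan of a toric Fano is the face fan of a polytope, Lemma~\ref{Plemma} gives the vanishing directly without appealing to Musta{\c{t}}{\v{a}}'s theorem.
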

\begin{proof} Corollary \ref{klaus} takes care of the local situation
so what is left to prove is condition (iv) in Corollary
\ref{cor2}. This follows from $H^2_{B(\Sigma)}(S)_{D_i} = H^1(X,
\CO_X(D_i) = 0$ since $X$ is Fano. Alternatively, the Fano condition
implies that the fan $\Sigma$  is the face fan of a polytope. Thus the
same arguments 
as above show that $H^2_{B(\Sigma)}(S)_{D_i} = 0$.
\end{proof}

\bibliographystyle{amsalpha}

\providecommand{\bysame}{\leavevmode\hbox to3em{\hrulefill}\thinspace}
\providecommand{\MR}{\relax\ifhmode\unskip\space\fi MR }
\providecommand{\MRhref}[2]{%
  \href{http://www.ams.org/mathscinet-getitem?mr=#1}{#2}
}
\providecommand{\href}[2]{#2}

\end{document}